\numberwithin{equation}{section}
\numberwithin{figure}{section}
\theoremstyle{plain}
\newtheorem{thm}{\protect\theoremname}[section]
\newtheorem*{thm*}{\protect\theoremname}
\theoremstyle{plain}
\newtheorem{lem}[thm]{\protect\lemmaname}
\newtheorem{cor}[thm]{\protect\corollaryname}
\newtheorem{proposition}[thm]{Proposition}
\newtheorem{claim}[thm]{Claim}
\newtheorem{definition}[thm]{Definition}
\newtheorem{theorem}{Theorem}
\providecommand{\corollaryname}{Corollary}
\providecommand{\lemmaname}{Lemma}
\providecommand{\theoremname}{Theorem}
\newcommand{\annotation}[1]{%
  \marginpar{\small\itshape\color{blue}#1}}
\global\long\def\defeq{\vcentcolon=}
\global\long\def\bool{\{0, 1\}}
\global\long\def\cube{\bool^n}
\newcommand{\remove}[1]{}
\global\long\def\E{\mathbb{E}}
\global\long\def\enc{G}
\global\long\def\encnoise{T_\rho}
\global\long\def\contrnoiseamount{\frac{1}{\sqrt{q}}}
\global\long\def\contrnoise{T_{\contrnoiseamount}}
\global\long\def\totnoise{T_{\rho \contrnoiseamount}}
\global\long\def\slice{\mathcal{S}_{1, n}}
\global\long\def\geslice{\mathcal{S}_{\geq 1, n}}
\DeclareMathOperator*{\EE}{\E}
\DeclareMathAlphabet{\mymathbb}{U}{BOONDOX-ds}{m}{n}
\newcommand{\NK}[1]{\textcolor{teal}{\{Nathan: #1\}}}
\newcommand{\NL}[1]{\textcolor{orange}{\{Noam: #1\}}}
\newcommand{\OM}[1]{\textcolor{cyan}{\{Omri: #1\}}}
\title{Sharp Hypercontractivity for Global Functions}
\author{Nathan Keller\thanks{Department of Mathematics, Bar-Ilan University. \texttt{Nathan.Keller@biu.ac.il}. Supported by the Israel Science Foundation (grant no.~2669/21).} , 
Noam Lifshitz\thanks{Einstein institute of Mathematics, Hebrew University. \texttt{noamlifshitz@gmail.com}. Supported by the European Research Council (StG no.~101163794) and by the Israel Science Foundation (grant no.~1980/22).} , and
Omri Marcus\thanks{Department of Mathematics, Bar-Ilan University. \texttt{omri.mar987@gmail.com}}}
\begin{document}

\maketitle

\begin{abstract}
    For a function $f \colon \{0,1\}^n \to \mathbb{R}$ with Fourier expansion $f=\sum_{S \subset \{1,2,\ldots,n\}} \hat f(S)\chi_S$, the hypercontractive inequality for the noise operator allows bounding norms of $T_\rho f = \sum_S \rho^{|S|} \hat f(S)\chi_S$ in terms of norms of $f$. If $f$ is Boolean-valued, the level-$d$ inequality allows bounding the norm of $f^{=d}=\sum_{|S|=d} \hat f(S)\chi_S$ in terms of $\mathbb{E}[f]$. These two inequalities play a central role in analysis of Boolean functions and its applications. 

    While both inequalities hold in a sharp form when the hypercube $\{0,1\}^n$ is endowed with the uniform measure, it is easy to show that they do not hold for more general discrete product spaces, and finding a `natural' generalization was a long-standing open problem. In [P. Keevash, N. Lifshitz, E. Long, and D. Minzer, Hypercontractivity for global functions and sharp thresholds, J. Amer. Math. Soc., 37:245-279, 2024], Keevash et al.~obtained a hypercontractive inequality for general discrete product spaces, that holds for functions which are `global' -- namely, are not significantly affected by a restriction of a small set of coordinates. This hypercontractive inequality is not sharp, which precludes applications to the symmetric group $S_n$ and to other settings where sharpness of the bound is crucial. Also, no sharp level-$d$ inequality for global functions over general discrete product spaces is known. 

    We obtain sharp versions of the hypercontractive inequality and of the level-$d$ inequality for global functions over discrete product spaces. Our inequalities open the way for diverse applications to
    extremal set theory, group theory, theoretical computer science, and number theory.
    We demonstrate this by proving quantitative bounds on the size of intersecting families of sets and vectors under weak symmetry conditions and by describing numerous applications that were obtained using our results. Those contain applications to the study of functions over the symmetric group $S_n$ -- including hypercontractivity and level-$d$ inequalities, character bounds, variants of Roth’s theorem and of Bogolyubov’s lemma, and diameter bounds, as well as an application to the Furstenberg-S\'{a}rk\"{o}zy problem on the maximal size of a subset of $\{1,2,\ldots,n\}$ which does not contain two elements that differ by a perfect square. 
\end{abstract}

\section{Introduction}


\subsection{Background}

Analysis of Boolean functions was initiated 
as a study of functions on the hypercube $\{0,1\}^n$ via their discrete Fourier expansion, that is, their unique representation as a multi-linear polynomial over the reals. The field has expanded greatly in the last 30 years, a rich theory was developed, and applications to a wide variety of fields -- e.g., percolation theory, group theory, hardness of approximation, machine learning, social choice, etc.~-- were established (see~\cite{Kalai-IMC,O'Donnell-IMC}). 

The basic tool, which lies behind most central results in the field, is the hypercontractive inequality for the noise operator, obtained by Bonami~\cite{Bonami}, Gross~\cite{gross}, and Beckner~\cite{Beckner}. For $0\leq \rho \leq 1$, the noise operator $T_{\rho} \colon L^2(\{0,1\}^n) \to L^2(\{0,1\}^n)$ is defined by $T_{\rho} f (x) = \E [f(y)]$, where $y$ is obtained from $x$ by leaving each coordinate unchanged with probability $\rho$ and replacing it by a random value with probability $1-\rho$. The basic form of the inequality asserts that when the hypercube is endowed with the uniform measure $\mu_{1/2}$, this operator is hypercontractive. 
\begin{theorem} \label{thm: classical hypercontractivity}
    Let $q\ge 2$ and let $\rho \le \frac{1}{\sqrt{q-1}}.$ 
    For any $f\in L^2(\cube,\mu_{1/2})$, we have
    \[
    \lVert T_\rho f\rVert_q\le \lVert f\rVert_2 .
    \]
\end{theorem}
An alternative approach to the noise operator is via the Fourier expansion. If $f=\sum_{S \subset \{1,2,\ldots,n\}} \hat f(S)\chi_S$, where $\chi_S=\prod_{i \in S} (2x_i-1)$, is the Fourier expansion of $f$, then $T_{\rho}f=\sum_S \rho^{|S|} \hat f(S)\chi_S$. Hence, it is natural to view the Fourier expansion of $f$ as divided into `levels', where the $d$'th level of $f$, defined as $f^{=d}=\sum_{|S|=d} \hat f(S) \chi_S$, is included in the eigenspace that corresponds to the eigenvalue $\rho^{|S|}$ of the noise operator. An extremely useful corollary of the hypercontractive inequality for Boolean-valued functions is the so-called level-$d$ inequality: 
\begin{thm} \label{thm: classical level-d}
    For any $f \colon (\{0,1\}^n,\mu_{1/2}) \to \{0,1\}$, and for any $d \le 2 \log (1 / \E[f])$, we have
    \[
    \lVert f^{=d}\rVert_2^2 \leq \E[f]^2 \left( 2e \frac{\log(1 / \E[f])}{d} \right)^d .
    \]
\end{thm}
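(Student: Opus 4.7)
The plan is to derive Theorem~\ref{thm: classical level-d} from the dual form of Theorem~\ref{thm: classical hypercontractivity}. Since $T_\rho$ is self-adjoint on $L^2(\cube,\mu_{1/2})$, dualizing the statement ``$T_\rho \colon L^2\to L^q$ has norm $\leq 1$ for $\rho \leq 1/\sqrt{q-1}$'' yields the equivalent ``$T_\rho \colon L^p\to L^2$ has norm $\leq 1$ for $\rho \leq \sqrt{p-1}$'', where $p = q/(q-1) \in (1,2]$ is the conjugate exponent. Taking $\rho = \sqrt{p-1}$ and applying this to $f$ itself gives $\|T_\rho f\|_2 \leq \|f\|_p$.

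On the left-hand side, Parseval and the eigenstructure of $T_\rho$ yield
\[
\|T_\rho f\|_2^2 \;=\; \sum_S \rho^{2|S|}\,\hat{f}(S)^2 \;\geq\; \rho^{2d}\,\|f^{=d}\|_2^2,
\]
while on the right, since $f$ is $\{0,1\}$-valued, $\|f\|_p^2 = \E[f]^{2/p}$. Rearranging produces the master inequality
\[
\|f^{=d}\|_2^2 \;\leq\; (p-1)^{-d}\,\E[f]^{2/p},
\]
valid for every $p\in(1,2]$. To recover the claim I would then optimize in $p$. Writing $\mu=\E[f]$, $L=\log(1/\mu)$, and $\epsilon=p-1$, the factor $\mu^{2/p} = \mu^{2}\,\mu^{-2\epsilon/(1+\epsilon)} \leq \mu^{2}\,e^{2\epsilon L}$, so the bound becomes at most $\mu^{2}\,\epsilon^{-d}\,e^{2\epsilon L}$. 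The calculus-minimizing choice is $\epsilon = d/(2L)$, which lies in $(0,1]$ precisely when $d \leq 2L = 2\log(1/\E[f])$ — exactly the hypothesis of the theorem. Substituting collapses the bound to $\E[f]^{2}\,(2e\log(1/\E[f])/d)^d$, matching the statement.

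The two places where care is needed are the dualization (one has to verify that self-adjointness of $T_\rho$ together with $(L^q)^* = L^{q'}$ preserves the constant $1$, turning the sharp $L^2 \to L^q$ inequality into a sharp $L^p \to L^2$ one) and the optimization, which both forces the particular form $(2e\log(1/\mu)/d)^d$ and dictates the hypothesis $d \leq 2\log(1/\E[f])$. Neither is conceptually deep, but tracking the sharp constants through both is the part I would check most carefully.
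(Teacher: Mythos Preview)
Your argument is correct and is in fact the standard derivation of the classical level-$d$ inequality from hypercontractivity. Note, however, that the paper does not supply its own proof of this statement: Theorem~\ref{thm: classical level-d} is quoted in the introduction as a known background result (attributed to the literature on analysis of Boolean functions), so there is no ``paper's proof'' to compare against. Your dualization-then-optimization approach is exactly the textbook route; the only minor remark is that the dual inequality $\|T_\rho g\|_2 \le \|g\|_p$ for $\rho=\sqrt{p-1}$ is itself usually stated directly as part of the Bonami--Beckner theorem (the $(p,2)$ form), so one need not pass through self-adjointness explicitly, though doing so is of course fine.
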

Kahn, Kalai, and Linial~\cite{KKL} used a variant of Theorem~\ref{thm: classical hypercontractivity} for $\{-1,0,1\}$-valued functions to show that any function $f \colon \{0,1\}^n \to \{0,1\}$ whose expectation is bounded away from $0$ and $1$ has an influential variable, and Friedgut~\cite{Fri98} used it to show that if the sum of the influences of the variables on $f$ is small, then $f$ depends on only a few variables. 

\medskip This landscape breaks down as one moves from the uniform measure $\mu_{1/2}$ on $\{0,1\}^n$ to more general finite product spaces, and in particular, to the biased measure $\mu_p$ on $\{0,1\}^n$ (defined as $\mu_p(x)=p^{\sum x_i} (1-p)^{n-\sum x_i}$), which plays a central role in applications of analysis of Boolean functions to threshold phenomena (see, e.g.,~\cite{BK97,Fri99}). In the biased measure setting, the characters $\chi_S$ in the Fourier expansion have the form $\chi_S=\prod_{i \in S} \frac{x_i-p}{\sqrt{p(1-p)}}$. Hence, the function $f(x)=x_i$ has the Fourier expansion \[f(x)= p + \sqrt{p(1-p)}\frac{x_i - p}{\sqrt{p(1-p)}} = p+\sqrt{p(1-p)}\chi_{i}.\] Thus, $\lVert f\rVert_2=\sqrt{p}$ and $\lVert T_{\rho}f\rVert_q \geq \rho \cdot p^{1/q}$, which means that $\rho$ must be as small as $O(p^{1/2 - 1/q})$, for Theorem~\ref{thm: classical hypercontractivity} to hold.
Similarly, the function 
$g(x)=\prod_{i=1}^d x_i$ satisfies $\lVert g^{=d}\rVert_2^2=(p(1-p))^d \approx p^d=\mathbb{E}[g]$, and thus, a `correction factor' of order at least $\Omega(p \log(1/p))^{-d}$ is needed in order to fix Theorem~\ref{thm: classical level-d} (In both cases, these `corrections' are sufficient; see~\cite{K12,o2014analysis}).

As a result, the Kahn-Kalai-Linial theorem and Friedgut's theorem do not hold for general functions with respect to $\mu_p$ where $p=o(1)$, and numerous deep works obtained variants of them for restricted classes of functions, which exclude the counterexamples such as $f(x)=x_i$ (e.g.,~\cite{Fri99,Hat12}).

\subsection{The results of Keevash et al.~-- qualitative hypercontractivity for global functions}

Keevash, Lifshitz, Long, and Minzer~\cite{KLLM21} obtained a generalization of the hypercontractive inequality to the biased measure on the hypercube, under the additional assumption that the function $f$ is \emph{global}, i.e., is not significantly affected by a restriction of a small set of coordinates. Formally, for each $S \subset [n]=\{1,2,\ldots,n\}$, define the generalized influence $I_S(f)$ as
\[
I_S(f)=\E_{\mu_p}\left[\left(\sum_{x \in \{0,1\}^{|S|}} (-1)^{|S|-\sum x_i} f_{S \to x}\right)^2\right],
\]
where the restriction $f_{S \to x} \colon \{0,1\}^{[n] \setminus S} \to \mathbb{R}$ is defined by $f_{S \to x}(y)=f(x,y)$. The main result of~\cite{KLLM21} for $(\{0,1\}^n,\mu_p)$ reads as follows.
\begin{thm}\label{thm:KLLM-hypercontractivity}
    Let $q>2$ and let $\rho<(2q)^{-1.5}$. Let $f \colon  (\{0,1\}^n,\mu_p) \to \mathbb{R}$, and assume that $I_S(f) \leq \beta \lVert f\rVert_2^2$ for all $S \subset [n]$. Then $\lVert T_{\rho}f\rVert_q \leq \beta^{1/2-1/q}\lVert f\rVert_2$. 
\end{thm}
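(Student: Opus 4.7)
The plan is to prove Theorem~\ref{thm:KLLM-hypercontractivity} by induction on $n$, following the shape of the classical Bonami--Beckner argument but carefully tracking how the globalness parameter $\beta$ propagates through the recursion. For the base case $n=1$ one verifies the required two-point inequality directly. For the inductive step, decompose $f$ along the last coordinate: writing $f_0 = f|_{x_n=0}$ and $f_1 = f|_{x_n=1}$, set
\[
A \defeq (1-p)f_0 + p f_1, \qquad B \defeq \sqrt{p(1-p)}\,(f_1 - f_0),
\]
so that $f = A + \chi_n B$ with $A, B$ functions on $\{0,1\}^{[n-1]}$. A short computation using $\|\chi_n\|_2 = 1$ and $\E[\chi_n] = 0$ gives $\|f\|_2^2 = \|A\|_2^2 + \|B\|_2^2$ and $T_\rho f = T_\rho A + \rho\chi_n T_\rho B$, where on the right $T_\rho$ acts only on the first $n-1$ coordinates.

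Next I would check that the globalness hypothesis transfers cleanly. For any $S \subseteq [n-1]$, Jensen gives $I_S(A) \le I_S(f) \le \beta\|f\|_2^2$, and a direct computation yields $I_S(B) = p(1-p)\,I_{S \cup \{n\}}(f) \le p(1-p)\beta\|f\|_2^2$. Taking $S = \emptyset$ in the latter gives the crucial bound $\|B\|_2^2 \le p(1-p)\beta\|f\|_2^2$: when $p$ is small, the discrete ``derivative'' $B$ is substantially smaller in $L^2$ than $f$ itself. This smallness is precisely the leverage that globalness provides over a naive biased hypercontractivity argument.

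The central technical step is a sharp single-coordinate inequality. Conditioning on $x_n$ yields
\[
\|T_\rho f\|_q^q = p\,\Bigl\|T_\rho A + \rho\sqrt{\tfrac{1-p}{p}}\,T_\rho B\Bigr\|_q^q + (1-p)\,\Bigl\|T_\rho A - \rho\sqrt{\tfrac{p}{1-p}}\,T_\rho B\Bigr\|_q^q,
\]
and the target bound is $\beta^{q/2-1}\bigl(\|A\|_2^2 + \|B\|_2^2\bigr)^{q/2}$. Applying the triangle inequality inside each norm and then invoking induction on $A$ and $B$ separately is too lossy, because $\|\chi_n\|_q \sim p^{1/q - 1/2}$ blows up as $p \to 0$. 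Instead I would Taylor-expand $|u + \rho\chi_n v|^q$ in $v$ around $u = T_\rho A$: the zeroth-order term gives $\|T_\rho A\|_q^q$ (bounded by induction), the first-order term in $v$ vanishes after integration in $x_n$ because $\E[\chi_n]=0$, and the leading correction is quadratic in $v$. The $L^2$-smallness of $B$ from globalness then absorbs the $p^{1/q-1/2}$ loss, while the smallness $\rho < (2q)^{-3/2}$ is chosen just to damp the higher-order Taylor remainders.

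The hardest part is controlling this Taylor estimate precisely enough to recover the \emph{sharp} exponent $\beta^{1/2 - 1/q}$. This requires verifying that at each inductive step the globalness parameter does not deteriorate: the linear combinations $A \pm \lambda B$ that appear in the inductive hypothesis must remain $\beta$-global, since any multiplicative loss per step would accumulate exponentially over $n$ recursions. Achieving this cleanly will probably require a slightly strengthened inductive statement---e.g., an ``unnormalized'' version whose conclusion is $\|T_\rho f\|_q \le \beta^{1/2 - 1/q} M$ whenever $I_S(f) \le \beta M^2$ for all $S$ and some $M \ge \|f\|_2$---so that the parameters propagate without any need to renormalize $B$ against its own tiny $L^2$ mass.
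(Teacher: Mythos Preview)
This theorem is not proved in the present paper: it is the main result of Keevash--Lifshitz--Long--Minzer~\cite{KLLM21}, quoted as background for the paper's own sharper Theorem~\ref{cor:hypercontractivity for global functions - intro}, so there is no in-paper proof to compare against. The paper does summarize the KLLM technique in one line (paragraph ``Our techniques''): their argument ``proceeds by replacing the coordinates of the function one-by-one by Gaussians and tracking the `cumulative error'\,''. Your plan --- induction on $n$ via a two-point Taylor estimate --- is explicitly not that. It is closer in spirit to how this paper proves its own theorem, but even here the authors do not run a bare two-point induction; they pass through a Gaussian encoding (Section~\ref{sec:Gaussian}) and invoke Gaussian hypercontractivity precisely to sidestep the step you yourself flag as ``the hardest part''.

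The concrete obstacle in your Taylor step is that $\E_{\mu_p}[|\chi_n|^k] \sim p^{1-k/2}$ for small $p$, so the order-$k$ remainder in $\E_{x_n}|u + \rho\chi_n v|^q$ carries a factor $\rho^k p^{1-k/2}$. Your bound $\|B\|_2^2 \le p(1-p)\beta\|f\|_2^2$ supplies one power of $p$, which exactly balances the quadratic term, but for $k\ge 3$ you would need $p^{k/2-1}$-type smallness of $T_\rho B$ in higher norms --- and that is precisely the quantity the induction is supposed to produce. The condition $\rho < (2q)^{-3/2}$ cannot rescue this, since the blow-up is in $p$, not in $q$. Both~\cite{KLLM21} and the present paper close this loop by comparing to Gaussian space rather than by a direct pointwise Taylor bound; your sketch has not yet supplied an alternative mechanism.
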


The novel technique of Keevash et al.~allowed them to prove a stronger variant of Bourgain's sharp threshold theorem~\cite[appendix]{Fri99}, to make progress on the inverse problem for the isoperimetric inequality on the Boolean cube~\cite{KK07}, and to obtain a $p$-biased analogue of the invariance principle of Mossel, O'Donnell and Oleszkiewicz~\cite{MOO10}. Even more recently, this technique was used in~\cite{ElKiLi22} to give a much shorter proof of the breakthrough result of Khot, Minzer and Safra on expansion of the Grassmann graph~\cite{KhMiSa23} which was the main mathematical ingredient in the proof of the 2-to-2 games conjecture in complexity theory.

A hypercontractive inequality under a somewhat different notion of globalness was obtained independently by O'Donnell and Zhao~\cite{OZ21}, who used it to prove a nearly-perfect expansion property for `pseudo-random' sets.

However, both Theorem~\ref{thm:KLLM-hypercontractivity} and the results of~\cite{OZ21} are not sharp in terms of the noise rate $\rho$, and this precludes applications to settings where a sharp version is crucial, such as various applications to the symmetric group $S_n$. Likewise, a sharp version of Theorem~\ref{thm: classical level-d} for the biased measure on the hypercube (without a `correction factor' that depends on $p$) is not known. 

\subsection{Our results}

In this paper we present sharp versions of the hypercontractive inequality and of the level-$d$ inequality for global functions over general finite product spaces. 
In the statements of our results, we use a \emph{restriction-based} notion of globalness, spelled out in the formulations of Theorems~\ref{cor:hypercontractivity for global functions - intro} and~\ref{thm:level d intro}, rather than a \emph{derivative-based} notion, like $I_S(f)$ used in Theorem~\ref{thm:KLLM-hypercontractivity}. The reason for this is that the restriction-based notion is more convenient for applications and is probably more natural from the combinatorial point of view. As the proofs require also using derivative-based notions of globalness, we establish transitions between the notions in Section~\ref{sec:hypercontractive}.

\begin{thm}\label{cor:hypercontractivity for global functions - intro}
     Let $q \ge 2$ and let $(\Omega,\mu)$ be a finite probability space. Let $f\colon (\Omega^n, \mu^n) \to \mathbb{R}$, and assume that \[\lVert f_{S \to x}\rVert_2 \leq r^{|S|} \lVert f\rVert_2\] for all $S \subset [n]$ and for all $x \in \Omega^S$. If $r \geq 1$ and $\rho \leq \frac{\log q}{32 r q}$,
    then 
    \[
    \lVert T_{\rho}f\rVert_q \le \lVert f\rVert_2.
    \]
    \end{thm}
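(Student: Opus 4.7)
My plan is to reduce the theorem to the classical hypercontractive inequality on the uniform Boolean cube (Theorem~\ref{thm: classical hypercontractivity}) via an encoding argument, with the globalness assumption being what absorbs the measure-dependent loss that would otherwise arise on a general product space.

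First, I would factor the noise operator using the semigroup property as $T_\rho = T_{\rho\sqrt q} \circ \contrnoise$, where $\contrnoise = T_{1/\sqrt q}$ plays the role of a pre-smoothing step. The appearance of the macros $\contrnoise$ and $\totnoise = T_{\rho/\sqrt q}$ in the paper's preamble strongly suggests this factorization: the role of the inner $T_{1/\sqrt q}$ is to align the rates so that classical hypercontractivity (which permits noise up to $1/\sqrt{q-1}$) becomes the binding constraint after encoding, while the outer $T_{\rho\sqrt q}$ contributes a contraction on $L^q$ that can be absorbed.

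Next, I would encode $\Omega \hookrightarrow \{0,1\}^{|\Omega|}$ via the one-hot map and lift to an encoding $\enc : \Omega^n \hookrightarrow (\slice)^n \subset (\{0,1\}^{n|\Omega|}, \mu_{1/2})$; the macros $\slice = \mathcal{S}_{1,n}$ and $\geslice = \mathcal{S}_{\ge 1,n}$ in the preamble are consistent with this choice. I would extend $f$ to the ambient cube with weights chosen to make the encoding an $L^2$-isometry, apply classical hypercontractivity to the extension, and transfer the resulting $L^q$-bound back to $\Omega^n$. The globalness condition is what ensures that the transfer incurs no factor depending on $|\Omega|$ or on the measure $\mu$: without it the extension would have an arbitrarily large $L^q/L^2$ ratio (driven by near-atomic functions on the cube), while $r$-globalness uniformly bounds this ratio in terms of $r$.

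The main obstacle will be making the encoding-and-extension step quantitatively sharp, so as to produce precisely the threshold $\rho \le \log q/(32 r q)$. I expect the $\log q$ factor to arise from summing a geometric level-$d$ estimate for the Efron--Stein decomposition of $f$, with ratio tuned by $r$ and by the classical hypercontractivity rate $\sqrt{q-1}$. This level-by-level accounting, together with a careful matching of noise rates across the encoding, is where the delicate work lies, and it is also what distinguishes the sharp threshold from the weaker bound $\rho < (2q)^{-3/2}$ of Theorem~\ref{thm:KLLM-hypercontractivity}.
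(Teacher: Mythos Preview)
Your high-level intuitions are right on two counts: the noise is factored through $T_{1/\sqrt q}$ so that a classical hypercontractive inequality can be invoked after an encoding, and the globalness hypothesis is what absorbs the measure-dependent blowup. But the concrete plan has a wrong target and, more importantly, is missing the mechanism that actually produces the sharp $\log q/(rq)$ threshold.

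The encoding in the paper is into \emph{Gaussian} space, not the uniform Boolean cube: $\enc$ sends an orthonormal basis $\{1,f_1,\ldots,f_{k-1}\}$ of $L^2(\Omega,\mu)$ to $\{1,z_1,\ldots,z_{k-1}\}$ with $z_i$ standard Gaussians, and Gaussian hypercontractivity (Theorem~\ref{thm: hypercontractivity in Gaussian space}) is applied. The macros $\slice$ and $\geslice$ belong to Section~\ref{sec:vector_intersecting_families} (the vector-intersecting application) and play no role here. A one-hot map into $(\{0,1\}^{|\Omega|},\mu_{1/2})$ neither respects a non-uniform $\mu$ nor intertwines the noise operators, so that route does not get off the ground.

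The more serious gap is the origin of the $\log q$. It does not come from ``summing a geometric level-$d$ estimate''; it comes from a one-dimensional comparison (Lemma~\ref{lem:n=1_one_var_general}): for centered unit-variance $X$ and Gaussian $Z$,
\[
\|1+\rho d X\|_q^q \le \|1+dZ\|_q^q + \beta^q\|dX\|_q^q,\qquad \beta=\rho\Bigl(1+\tfrac{2(q-2)}{\log(1/\rho)}\Bigr),
\]
obtained by truncating at $\rho|dX|=\tfrac{\log(1/\rho)}{2(q-2)}$ and Taylor-expanding. This $\beta$ is the Poisson-type rate. Tensorizing (Proposition~\ref{lem:tenzorization}) and then applying Gaussian hypercontractivity to the encoded pieces yields a bound on $\|T_{\rho/\sqrt q}f\|_q^q$ as a sum over $S$ of $\beta^{q|S|}q^{-q|S|/2}\,\mathbb E_x\|D_{S,x}f\|_2^q$ (Theorem~\ref{thm:bounding_noised_q_norm_with_derivatives}). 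Globalness enters only now, bounding each $\|D_{S,x}f\|_2^{q-2}\le (r^{|S|}\gamma)^{q-2}$; the remaining $\|D_{S,x}f\|_2^2$ factors are collapsed by the identity $\sum_S\|L_S T_{1/\sqrt2}f\|_2^2=\|f\|_2^2$ (Lemma~\ref{lem: Efron Stein formula}). Finally, the restriction hypothesis $\|f_{S\to x}\|_2\le r^{|S|}\|f\|_2$ is converted to the derivative-based $(2r,\|f\|_2)$-globalness needed above via Lemma~\ref{lem:restriction global implies global}, at the cost of one factor of $2$ in $r$. Your proposal has no analogue of the one-dimensional step, and without it there is no way to upgrade the Gaussian rate $1/\sqrt q$ to the Poisson rate $\log q/q$.
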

We note that in the case of the biased measure on the hypercube, the globalness assumption in Theorem~\ref{cor:hypercontractivity for global functions - intro} is essentially equivalent, up to a constant factor, to the assumption in Theorem~\ref{thm:KLLM-hypercontractivity} (see Theorem~\ref{thm: strong hypercontractivity for global functions}).

The classical hypercontractive inequality (i.e., Theorem~\ref{thm: classical hypercontractivity}) implies that if $f \colon (\{0,1\}^n,\mu_{1/2}) \to \mathbb{R}$ is a linear function, then $\lVert f\rVert_q\le \sqrt{q}\lVert f\rVert_2.$ This corresponds to the behaviour of Gaussian random variables $X$, which satisfy $\lVert X\rVert_q = \Theta(\sqrt{q}\lVert X\rVert_2).$ When we switch from the uniform measure on the Boolean cube to the general product space setting, a new phenomenon arises, where functions behave like Poisson random variables, which satisfy $\lVert X\rVert_q=\Theta(\frac{q}{\log q}\lVert X\rVert_2).$ Theorem~\ref{cor:hypercontractivity for global functions - intro}, which demonstrates this phenomenon, is sharp for functions of general degree. This can be observed by inspecting the function $f \colon (\{0,1\}^n, \mu_{p}) \to \mathbb{R}$, defined by
\[f= \prod_{j=0}^{d-1}\left(\sum_{i=1}^{n/d} (x_{j(n/d)+i}-p) \right),\] 
where $p = \frac{d}{n}$ and $d$ divides $n$, as is shown in Proposition~\ref{lem: global hyper sharpness} below.

By relying on a slightly more involved version of Theorem \ref{cor:hypercontractivity for global functions - intro} (namely, Theorem~\ref{thm:hypercontractivity for global functions - alternative} and Corollary~\ref{cor:hypercontractivity for global functions - alternative}), we prove the following. 
\begin{thm}\label{thm:level d intro}
    There exists a constant $C>0$ such that the following holds. Let $(\Omega,\mu)$ be a finite probability space, and let $f\colon (\Omega^n, \mu^n) \to \{0,1\}$. Assume that for some $r>1$ and $d\le  \frac{1}{4}\log(1/\mathbb{E}[f])$, we have 
    $\mathbb{E}[f_{S\to x}] \le r^{|S|}  \mathbb{E}[f]$ for all sets $S$ of size $\le d$ and all $x\in \Omega^S.$ Then 
    \[ \lVert  f^{=d}\rVert_2^2 \leq \mathbb{E}^2[f] \left(\frac{Cr^2 \log(1/\mathbb{E}[f])}{d}\right)^d.
    \]
\end{thm}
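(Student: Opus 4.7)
The plan is to derive Theorem~\ref{thm:level d intro} from Corollary~\ref{cor:hypercontractivity for global functions - alternative} by the same duality argument that classically derives Theorem~\ref{thm: classical level-d} from Theorem~\ref{thm: classical hypercontractivity}, adapted to carry the globalness parameter~$r$ and a sharper moment calibration. The Boolean hypothesis enters twice: once to identify $\|f\|_{q'}^{q'}$ with $\mathbb{E}[f]$, and once more (implicitly, through the alternative hypercontractivity) to turn the expectation-globalness assumption $\mathbb{E}[f_{S\to x}] \le r^{|S|}\mathbb{E}[f]$ into usable control of $\|f_{S\to x}\|_2$ via the identity $\|f_{S\to x}\|_2^2 = \mathbb{E}[f_{S\to x}]$, which upgrades an expectation bound with parameter $r^{|S|}$ to an $L^2$ bound with parameter $r^{|S|/2}$.

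Concretely, using the eigenrelation $f^{=d} = \rho^{-d}T_\rho f^{=d}$, the self-adjointness of $T_\rho$, and Hölder's inequality on conjugate exponents $q,q'$, I write
\[
\|f^{=d}\|_2^2 \;=\; \langle f^{=d}, f\rangle \;=\; \rho^{-d}\langle T_\rho f^{=d}, f\rangle \;\le\; \rho^{-d}\,\|T_\rho f^{=d}\|_q\,\|f\|_{q'},
\]
with $\|f\|_{q'} = \mathbb{E}[f]^{1-1/q}$ because $f$ is $\{0,1\}$-valued. I then apply Corollary~\ref{cor:hypercontractivity for global functions - alternative} to bound $\|T_\rho f^{=d}\|_q \le r^{O(d)}\,\|f^{=d}\|_2$, which is admissible for $\rho$ up to a Gaussian-type threshold of order $1/\sqrt{q}$; cancelling one factor of $\|f^{=d}\|_2$ and squaring yields
\[
\|f^{=d}\|_2^2 \;\le\; r^{O(d)}\,\rho^{-2d}\,\mathbb{E}[f]^{2-2/q}.
\]
Setting $q = 2\log(1/\mathbb{E}[f])/d$ (which is $\ge 8$ by the hypothesis $d\le \tfrac14\log(1/\mathbb{E}[f])$) makes $\mathbb{E}[f]^{-2/q} = e^d$, and taking $\rho$ at the threshold gives $\rho^{-2d} = O(q)^d = O(\log(1/\mathbb{E}[f])/d)^d$. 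Combining with the $r^{O(d)}$ prefactor produces the desired estimate $\|f^{=d}\|_2^2 \le \mathbb{E}[f]^2\bigl(Cr^2\log(1/\mathbb{E}[f])/d\bigr)^d$.

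The main obstacle is invoking the alternative hypercontractive inequality in exactly the shape needed, rather than just quoting Theorem~\ref{cor:hypercontractivity for global functions - intro} directly. A direct use of Theorem~\ref{cor:hypercontractivity for global functions - intro} would allow only the Poisson-type threshold $\rho \lesssim (\log q)/q$, so $\rho^{-2d}$ would carry an extra $(q/\log q)^{2d}$ and the bound would have $\log^{2d}$ rather than $\log^d$ dependence in the numerator. To obtain the $d$-th (not $2d$-th) power of $\log(1/\mathbb{E}[f])/d$, one needs a statement with the Gaussian-type threshold $\rho \lesssim 1/\sqrt{q}$ at the cost of only an $r^{O(d)}$ multiplicative penalty; and to end up with $r^2$ (not a higher power) per level in the final bound, the prefactor must be $r^d$ after the reduction from expectation-globalness to $L^2$-globalness. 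Both of these are exactly what the "slightly more involved" alternative hypercontractive inequality is designed to deliver; once that input is secured, the remaining bookkeeping—verifying $q \ge 8$ via the hypothesis $d \le \tfrac14\log(1/\mathbb{E}[f])$ and absorbing multiplicative constants into $C$—is routine.
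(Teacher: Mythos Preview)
Your overall duality skeleton---writing $\|f^{=d}\|_2^2=\langle f^{=d},f\rangle$, applying H\"older with $q'=q/(q-1)$, using $\|f\|_{q'}=\alpha^{1-1/q}$, and choosing $q\asymp 2\log(1/\alpha)/d$---is exactly the paper's outer loop (Lemma~\ref{lem:level d given globalness}). But the crucial step you gloss over is the one that carries all the difficulty: you assert that Corollary~\ref{cor:hypercontractivity for global functions - alternative} gives $\|T_\rho f^{=d}\|_q\le r^{O(d)}\|f^{=d}\|_2$ at the Gaussian threshold $\rho\asymp 1/\sqrt{q}$. For that to be admissible via part~(1) of the corollary, $f^{=d}$ must be $(r',\gamma)$-$L_2$-global with $r'^{(q-2)/q}\lesssim 1/\sqrt{q}$, i.e.\ with a \emph{small} globalness parameter $r'\asymp\sqrt{d/\log(1/\alpha)}$ and with $\gamma$ comparable to $\|f^{=d}\|_2$. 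Neither follows from the hypothesis. From $\mathbb E[f_{S\to x}]\le r^{|S|}\alpha$ one gets (via Lemma~\ref{lem:restriction global implies global} and $\|D_{S,x}f^{=d}\|_2\le\|D_{S,x}f\|_2$) only that $f^{=d}$ is $(2\sqrt r,\sqrt\alpha)$-$L_2$-global: the parameter is a constant bigger than $1$, and the scale is $\sqrt\alpha$, not $\|f^{=d}\|_2$. Running your argument with that input forces $\rho\lesssim 1/(\sqrt r\,q)$ and $\gamma=\sqrt\alpha$, and the output is $\|f^{=d}\|_2^2\lesssim (C\sqrt r\,q)^d\alpha^{3/2}$, which misses the target by a factor of $\alpha^{-1/2}$ and is strictly weaker whenever $d\le \tfrac14\log(1/\alpha)$.

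What the paper does to close this gap is a genuine extra idea: Theorem~\ref{thm: level d is global} establishes, by induction on $d$, that $f^{=d}$ is $(r'_d,\gamma'_d)$-$L_2$-global with the \emph{small} parameter $r'_d=\sqrt{d/\log(\gamma_2/\gamma_1)}$ and with $\gamma'_d$ already equal to the desired bound on $\|f^{=d}\|_2$. The induction works because $D_{i,x}[f^{=d}]=(D_{i,x}f)^{=d-1}$ is itself a level of a function satisfying the hypotheses with $d-1$ in place of $d$; the induction hypothesis then bounds $\|D_{i,x}[f^{=d}]\|_2$ by $r'_d\gamma'_d$, and a bootstrap via Lemma~\ref{lem:level d given globalness} (applied with this now-justified small $r'_d$) gives $\|f^{=d}\|_2\le\gamma'_d$. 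This bootstrapped globalness---with $r'_d$ small enough to allow $\rho\asymp 1/\sqrt q$---is precisely what your proposal presupposes but does not prove; supplying it is the heart of Section~\ref{sec:level-d}.
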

For constant values of $r$, Theorem~\ref{thm:level d intro} is tight, up to the value of $C$. To see this, consider the function $f=\mathrm{AND}_t\colon \{0,1\}^n \to \{0,1\}$ defined by $f(x)=1$ if and only if $x_1=x_2=\cdots =x_t =1.$ It is is easy to see that for all $d\le t$ we have \[\lVert f^{=d}\rVert_2^2 \ge \mathbb{E}^2[f]\left(\frac{c\log(1/\mathbb{E}[f])}{d}\right)^d,\]
for an absolute constant $c$. This tightness example for the classical level-$d$ inequality (i.e., Theorem~\ref{thm: classical level-d}) can be lifted to a tightness example for Theorem~\ref{thm:level d intro} with respect to an arbitrary uniform product space $\left(\mathbb{Z}/m\mathbb{Z}\right)^n,$ with $m$ even, by applying $f$ to the input modulo 2. 

We remark that in order to obtain Theorem \ref{thm:level d intro}, it is crucial to use our Theorem~\ref{cor:hypercontractivity for global functions - intro} in place of Theorem~\ref{thm:KLLM-hypercontractivity} of Keevash et al. Indeed, if we have tried to apply Theorem~\ref{thm:KLLM-hypercontractivity}, our $\log(1/\mathbb{E}[f])$ factor would have been replaced by its cube $\log^3(1/\mathbb{E}[f])$. Saving on the power of $\log(1/\mathbb{E}[f])$ is crucial in the applications of our theorem: while Theorem~\ref{thm:level d intro} is applicable for sets of density as small as $2^{-\Omega(n)}$, a weaker variant with a $\log^3(1/\mathbb{E}[f])$ factor is applicable only when the density is at least $2^{-O(n^{1/3})}$.

\paragraph{Our techniques.} Unlike the proof of Theorem~\ref{thm:KLLM-hypercontractivity} which proceeds by replacing the coordinates of the function one-by-one by Gaussians and tracking the `cumulative error', our proof of Theorem~\ref{cor:hypercontractivity for global functions - intro} follows the proof strategy of the classical hypercontractive inequality -- proving the assertion for $n=1$ and then leveraging the result to general $n$ by tensorization. 

\paragraph{Applications.} The sharpness of Theorems~\ref{cor:hypercontractivity for global functions - intro} and~\ref{thm:level d intro} allows applying the `global hypercontractivity' technique in a wide range of settings where the qualitative results of Theorem~\ref{thm:KLLM-hypercontractivity} are insufficient. We demonstrate this in the two following sections, by presenting strong quantitative bounds on the size of `smeared' intersecting families of sets and vectors, and by describing a variety of results on functions on the symmetric group -- including hypercontractivity and level-$d$ inequalities, character bounds, variants of Roth's theorem and of Bogolyubov's lemma, and diameter bounds, as well as a new bound for the Furstenberg-S\'{a}rk\"{o}zy problem, that were obtained using our techniques. 

\subsection{Quantitative bounds on the size of smeared intersecting families}

A family $\mathcal{F}$ of sets is called \emph{intersecting} if for any $A,B \in \mathcal{F}$, $A \cap B \neq \emptyset$. The classical Erd\H{o}s-Ko-Rado theorem~\cite{EKR} asserts that for any $k<n/2$, the maximal size of an intersecting family of $k$-element subsets of $[n]$ is $\binom{n-1}{k-1}$. The Erd\H{o}s-Ko-Rado theorem is the cornerstone of a large field of research, which studies extremal problems on families of sets or other objects, under various conditions on their intersections (see the survey~\cite{FranklT16a}). 

A common feature of the Erd\H{o}s-Ko-Rado theorem and of many other central results in the field is that the extremal examples are obtained by `juntas' -- families $\mathcal{F}$ that depend on a few variables (which formally means that there exists a small $S \subset [n]$ and some $\mathcal{J} \subseteq 2^S$, such that $A \in \mathcal{F}$ if and only if $A \cap S \in \mathcal{J}$). This naturally gives rise to the question, whether a \emph{smeared} intersecting family -- namely, an intersecting family on which `many' variables have a `large' influence -- must be significantly smaller than the Erd\H{o}s-Ko-Rado maximum value.

In recent years, numerous \emph{qualitative} results in this spirit were obtained in various settings, asserting that if an intersecting family is `symmetric' in some sense (e.g., is transitive symmetric, or is regular, which means that any element of the ground set is included in the same number of sets of the family), then its size is much smaller than the maximal size of an intersecting family in the same setting~\cite{DFR08, EKNS19,EKN17,IhringerK19,keevash2021forbidden}. However, the bounds these results yield on the size of the family are far from optimal. The only effective bound was obtained recently by Kupavskii and Zakharov~\cite{kupavskii2022spread}, who showed that for any $k<cn/\log n$, the maximum size of a regular intersecting family of $k$-element subsets of $[n]$ is $\exp(-\Omega \left( \frac{n}{k} \right) ) \binom{n}{k}$ (which is almost tight, as was shown in~\cite{kupavskii2018regular}).
The result of Kupavskii and Zakharov~\cite{kupavskii2022spread} uses their \emph{spread approximation method} which builds on the techniques developed in the breakthrough result of Alweiss et al.~\cite{ALWZ21} on the Sunflower Lemma. This method was further developed in a series of very recent papers and was used to obtain numerous applications to intersection theorems in various settings (see, e.g.,~\cite{FranklK25,Kupavskii23partitions,Kupavskii23hereditary,KupavskiiN24}). 

\medskip Using our sharp hypercontractivity results, we obtain an effective bound on the measure (or size) of intersecting families, under the weaker assumption that the family is \emph{smeared} -- namely, that `many' variables have a non-negligible influence on it, even if it is far from being transitive symmetric or regular. Formally, we identify a family $\mathcal{F} \subset \{0,1\}^n$ with the Boolean-valued function $f=1_{\mathcal{F}}$. We denote $\delta(f)=\max_i |\hat f(\{i\})|$, and $m(f)=|\{i \colon \hat f(\{i\})^2 \geq \frac{\delta(f)^2}{2}\}|$, where the Fourier coefficients are taken w.r.t.~the measure $\mu_p$. We say that $f$ is a \emph{smeared level-$1$ function} (or, in short, that $f$ is \emph{smeared}) if $m(f) \geq 1/p^2$. We prove the following.
\begin{thm}
\label{theo:upper_bound_smeared_and_intersect-intro}
    There exist $c_1,c_2>0$ such that the following holds. Let $\mathcal{F}$ be an intersecting family, and assume that $1/\sqrt{m(1_{\mathcal{F}})} \leq p \leq 1/2$. Then
    $\mu_p(\mathcal{F}) \leq c_1 \exp \left( -c_2/p \right)$.
\end{thm}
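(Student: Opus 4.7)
The plan is to combine the intersecting constraint, expressed as a Fourier identity, with the sharp hypercontractive results of the preceding sections. First, since $p\le 1/2$ one can couple $(x,y)\sim \mu_p\times\mu_p$ so that $y$ is disjoint from $x$ almost surely: given $x$, set $y_i=0$ wherever $x_i=1$ and $y_i\sim\mathrm{Ber}(p/(1-p))$ independently wherever $x_i=0$. Each coordinate pair then has correlation $\mathbb{E}[\chi_i(x)\chi_i(y)]=-p/(1-p)$, so by independence $\mathbb{E}[\chi_S(x)\chi_S(y)]=(-p/(1-p))^{|S|}$. Since $\mathcal{F}$ is intersecting (and $\emptyset\notin\mathcal{F}$), $f(x)f(y)\equiv 0$, so expanding $0=\mathbb{E}[f(x)f(y)]$ in the Fourier basis gives
\[
\mu_p(\mathcal{F})^2 \;=\; \sum_{d\ge 1}(-1)^{d+1}\left(\frac{p}{1-p}\right)^{\!d}\|f^{=d}\|_2^2 \;\le\; \sum_{d\ \mathrm{odd}}(2p)^d\,\|f^{=d}\|_2^2.
\]

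Next I would bound each $\|f^{=d}\|_2^2$ by appealing to the sharp hypercontractive and level-$d$ inequalities (Theorems~\ref{cor:hypercontractivity for global functions - intro} and~\ref{thm:level d intro}). The smearedness hypothesis $m(f)\ge 1/p^{2}$ is the main lever: unfolding the definition yields $\delta(f)\le p\sqrt{2}\,\|f^{=1}\|_2$, a Lindeberg-type condition saying that no single coordinate dominates $f^{=1}$. Combining this with a Bernstein-style hypercontractive estimate gives $\|f^{=1}\|_q\lesssim \sqrt{q}\,\|f^{=1}\|_2$ in the Gaussian regime $q\le 1/p$; together with the H\"older identity $\|f^{=1}\|_2^2=\langle f,f^{=1}\rangle\le \mu_p(\mathcal{F})^{1-1/q}\|f^{=1}\|_q$ this yields $\|f^{=1}\|_2^2\lesssim q\,\mu_p(\mathcal{F})^{2-2/q}$. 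Plugging into the leading term of the intersecting identity, optimizing at $q\asymp 1/p$, and controlling the odd-level tail $\sum_{d\ge 3,\ \mathrm{odd}}(2p)^d\|f^{=d}\|_2^2$ via the level-$d$ inequality, one obtains $\mu_p(\mathcal{F})\le c_1\exp(-c_2/p)$.

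The step I expect to be hardest is extracting a usable globalness parameter from the smearedness condition. A generic intersecting family (e.g.\ $\{A:1\in A,\ |A|\ge k\}$) is concentrated enough that it is $r$-global only with $r=1/p$, and naively feeding this into Theorem~\ref{thm:level d intro} produces only the much weaker bound $\mu_p(\mathcal{F})\le e^{-\Omega(p)}$. The insight is that smearedness is a substitute for $O(1)$-globalness \emph{at level $1$} -- it forces $f^{=1}$ to behave like a Gaussian linear form rather than a single-coordinate spike -- and the paper's sharp hypercontractivity promotes this local Gaussian behaviour to a Poisson-type tail for $f^{=1}$ of the correct order. Passing carefully between the restriction-based and derivative-based notions of globalness highlighted in the paper's technique paragraph, and book-keeping the odd higher-level terms so their contribution does not degrade the exponent, is where I expect the bulk of the technical effort to lie.
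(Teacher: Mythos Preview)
Your plan has a genuine gap at the ``higher-level tail'' step. The smearedness hypothesis $m(f)\ge 1/p^2$ is purely a level-$1$ condition: it controls $\delta(f)=\max_i|\hat f(\{i\})|$ relative to $\|f^{=1}\|_2$, and it gives you (as you correctly derive, and as the paper proves in Proposition~\ref{lem:spr_sy_lvl_1}) the sharp level-$1$ bound $\|f^{=1}\|_2^2\lesssim \alpha^2\log(1/\alpha)$. But it tells you nothing about $\|f^{=d}\|_2^2$ for $d\ge 2$. Theorem~\ref{thm:level d intro} requires $f$ to be $r$-restriction-global for some bounded $r$, and you yourself note that a generic smeared intersecting family (e.g.\ symmetrized dictatorships) is only $r$-global with $r\approx 1/p$. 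Plugging the only universal bound you have, namely Parseval $\sum_{d\ge 3}\|f^{=d}\|_2^2\le \alpha$, into your identity yields $\alpha^2 \le Cp\,\alpha^2\log(1/\alpha)+C'p^3\alpha$, which after absorbing the first term gives only $\alpha\lesssim p^3$ --- polynomial, not exponential, in $1/p$. Using the non-global $p$-biased level-$d$ inequality (with its $(p\log(1/p))^{-d}$ correction) does not help either.

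The paper's proof does not try to control the higher levels of $f$ at all. Instead it \emph{manufactures} a global function: choose $S,x$ maximizing $\mu_p(f_{S\to x})/e^{|S|}$, so that $h:=f_{S\to x}$ is $(e,\mu_p(h))$-restriction-global by construction, with $|S|\lesssim 1/p$ and $\mu_p(h)\ge\mu_p(f)$. Now $g:=f_{S\to 0}$ cross-intersects $h$, and the Friedgut identity (your coupling argument, applied to the pair $(h,g)$ rather than to $(f,f)$) combined with Proposition~\ref{prop:glob_hyp} applied to the \emph{global} function $h$ forces $\mu_p(g)\le \frac14\mu_p(f)$ (Proposition~\ref{prop:mes_cross_inter}). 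So $f$ has a small restriction-to-zero on a set of size $\lesssim 1/p$; \emph{this} is where smearedness finally enters, via the level-$1$ inequality, to derive a contradiction (Proposition~\ref{prop:density-decrease}). The globalization step is the missing idea in your proposal.
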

As we show in Section~\ref{sec:intersecting}, for any $p>1/\sqrt{n}$ the assertion of the theorem is tight, up to a factor of $O(\log(1/p)\log n)$ in the exponent. Since a `regular' family $\mathcal{F}$ is clearly smeared (having all first-level Fourier coefficients of $1_{\mathcal{F}}$ equal), Theorem~\ref{theo:upper_bound_smeared_and_intersect-intro} implies the result of Kupavskii and Zakharov~\cite{kupavskii2022spread} for any $k>\sqrt{n}$, up to the constant factors $c_1,c_2$, while being applicable to a much wider range of functions. 

\medskip We also obtain the following quantitative result for \emph{vector-intersecting} families -- namely, for families $\mathcal{A} \subset [k]^n$ such that for any $x,y \in \mathcal{A}$ there exists $i$ such that $x_i=y_i$.
A family $\mathcal{A} \subset [k]^n$ is called \emph{transitive-symmetric} if it is invariant under the action of a transitive group of permutations $G \le S_n$.
\begin{thm}
\label{theo:vector_intersecting_upper_bound_intro}
    There exist $c_1,c_2>0$ such that the following holds. Let $n, k$ be natural numbers such that $2 \log n \leq k \leq \sqrt{n} \log n$. Let $\mathcal{A} \subseteq [k]^n$ be a transitive-symmetric vector-intersecting family. Then
    \[
        \frac{|\mathcal{A}|}{k^n} \leq c_1 \exp \left( -\frac{c_2 k}{\log n} \right) .
    \]
    For $k > \sqrt{n} \log n$, we have $\frac{|\mathcal{A}|}{k^n} \leq c_1 \exp \left( -c_2\sqrt{n} \right)$.
\end{thm}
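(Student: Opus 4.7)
I would follow the same high-level template as Theorem~\ref{theo:upper_bound_smeared_and_intersect-intro}, passing through the sharp level-$d$ inequality of Theorem~\ref{thm:level d intro}, but adapted to vectors over $[k]$. Let $f = \mathbf{1}_{\mathcal{A}}$ and $\alpha = |\mathcal{A}|/k^n$, and let $\omega = e^{2\pi i/k}$. Work in the character basis $\chi_a(x)=\prod_i \omega^{a_i x_i}$ for $a \in (\mathbb{Z}/k\mathbb{Z})^n$, and define the degree of $\chi_a$ to be $|\mathrm{supp}(a)|$.

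\textbf{Step 1 (spectral reformulation).} Introduce the ``signed noise'' operator $T_\sigma$ with $\sigma := -1/(k-1)$, defined by $T_\sigma f(x)=\E_y[f(y)\mid y_i\neq x_i\ \forall i]$ where $y_i$ is uniform on $[k]\setminus\{x_i\}$. A short computation using $\sum_{c\neq 0}\omega^{a_i c}=-1$ for $a_i \neq 0$ shows $T_\sigma \chi_a = \sigma^{|\mathrm{supp}(a)|}\chi_a$. The vector-intersecting hypothesis gives $\E_{x,y}[f(x)f(y)\prod_i \mathbf{1}_{x_i\neq y_i}]=0$, which rearranges to $\langle f, T_\sigma f\rangle = 0$. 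Expanding in the character basis and isolating $\hat f(0)^2=\alpha^2$ yields
\[
\alpha^2 + \sum_{d\geq 1}\frac{(-1)^d}{(k-1)^d}\|f^{=d}\|_2^2 = 0, \qquad \text{hence} \qquad \alpha^2 \;\leq\; \sum_{\substack{d\geq 1\\ d\text{ odd}}}\frac{\|f^{=d}\|_2^2}{(k-1)^d}.
\]

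\textbf{Step 2 (globalness from transitive symmetry).} To bound each $\|f^{=d}\|_2^2$ via Theorem~\ref{thm:level d intro}, I must first verify that $f$ is $r$-global in the sense $\E[f_{S\to x}]\leq r^{|S|}\alpha$. Transitive symmetry forces $|\hat f(c e_i)|$ to be independent of $i$ for each fixed $c\in\mathbb{Z}/k\mathbb{Z}$, so the level-$1$ Fourier mass is distributed evenly across all $n$ coordinates — that is, $f$ is automatically \emph{smeared} at level $1$. I expect this to bootstrap to $r$-globalness at all relevant levels with $r = O(\sqrt{\log n})$, either through a direct averaging argument tailored to the symmetry group, or via a density-increment reduction: if some restriction $f_{S\to x}$ violates the globalness bound, pass to $f_{S\to x}$, which is still the indicator of a vector-intersecting family on $[k]^{n-|S|}$ (though no longer transitive-symmetric), and iterate the whole argument, which must terminate once the density is too large.

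\textbf{Step 3 (optimize).} Plugging the level-$d$ bound $\|f^{=d}\|_2^2 \leq \alpha^2 (Cr^2\log(1/\alpha)/d)^d$ into Step 1 (and absorbing the tail $d>\log(1/\alpha)/4$ using the trivial bound $\|f^{=d}\|_2^2\leq\alpha$, which is negligible once $(k-1)^d\gg \alpha^{-1}$) gives, after dividing by $\alpha^2$,
\[
1 \;\lesssim\; \sum_{d\geq 1}\Bigl(\frac{Cr^2\log(1/\alpha)}{d(k-1)}\Bigr)^{\!d}.
\]
Writing $M := Cr^2\log(1/\alpha)/(k-1)$, the function $d\mapsto (M/d)^d$ peaks at $d\approx M/e$ with value $e^{M/e}$, so the inequality forces $M\gtrsim 1$, i.e.\ $\log(1/\alpha)\gtrsim (k-1)/r^2 \gtrsim k/\log n$. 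This yields $\alpha \leq c_1\exp(-c_2 k/\log n)$ in the principal range $k\leq \sqrt{n}\log n$. In the range $k>\sqrt{n}\log n$, the same optimization saturates (the contribution of each individual level is bounded by the trivial $\alpha$ estimate once $d$ exceeds $\sqrt{n}$), delivering the weaker $\alpha\leq c_1\exp(-c_2\sqrt{n})$.

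\textbf{Main obstacle.} The difficulty is concentrated in Step 2: upgrading the level-$1$ smearedness given for free by transitive symmetry to a genuine $r$-globalness bound with $r$ no larger than $O(\sqrt{\log n})$. A crude argument may only guarantee $r$ as large as $\sqrt{k}$, which would collapse the final bound to a trivial $\exp(-c)$. Overcoming this is where I expect the more refined hypercontractive estimates of the paper (the alternative form of Theorem~\ref{cor:hypercontractivity for global functions - intro} alluded to in the introduction, applied to $f^{=d}$ after exploiting its derivative-based globalness) to enter in an essential way.
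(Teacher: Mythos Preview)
Your spectral identity in Step~1 is correct and the optimization in Step~3 is fine conditionally on Step~2, but Step~2 is a real gap that you flag and do not close, and the paper takes an entirely different route that sidesteps it.

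\textbf{What the paper actually does.} The paper never works directly in $[k]^n$. It embeds $\mathcal{A}$ into the Boolean cube $(\{0,1\}^k)^n$ via the one-hot map $z\mapsto(\{z_1\},\ldots,\{z_n\})$, takes the up-closure $\mathcal{B}$, and endows the cube with $\mu_p$ for $p=\log n/k$. A short coupling argument gives $|\mathcal{A}|/k^n\le 4\,\mu_p(\mathcal{B})$; the family $\mathcal{B}$ is set-intersecting; and transitive symmetry of $\mathcal{A}$ forces $\hat{1}_{\mathcal{B}}(\{i\})=\hat{1}_{\mathcal{B}}(\{i+jk\})$ for all $j$, so $m(1_{\mathcal{B}})\ge n\ge 1/p^2$. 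Theorem~\ref{theo:upper_bound_smeared_and_intersect-intro} then applies as a black box: $\mu_p(\mathcal{B})\le c_1\exp(-c_2/p)=c_1\exp(-c_2 k/\log n)$. For $k>\sqrt n\,\log n$ one uses monotonicity of $\mathcal{B}$ and takes $p=1/\sqrt n$ instead. So the vector theorem is a corollary of the set theorem plus an embedding, not an independent argument.

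\textbf{Why your Step~2 fails as written.} Transitive symmetry in $[k]^n$ yields level-$1$ smearedness and nothing more: a single-coordinate restriction can raise the density by a factor of $k$, and the symmetry does not prevent this. Your density-increment patch is circular---after passing to $f_{S\to x}$ you lose the symmetry, so you cannot ``iterate the whole argument.'' It is instructive to see how the paper's underlying Boolean proof (of Theorem~\ref{theo:upper_bound_smeared_and_intersect-intro}) escapes the same trap: it never proves that $f$ itself is $r$-global. Instead it (i) picks a restriction $h=f_{S\to x}$ maximizing $\mu_p(f_{S\to x})/e^{|S|}$, so that $h$ is $(e,\cdot)$-restriction-global with a \emph{constant} $r$; (ii) applies the hypercontractive $q$-norm bounds only to the levels of $h$, and combines this with a Friedgut-type cross-intersection spectral identity to force $g=f_{S\to 0}$ to be tiny; and (iii) invokes \emph{only} the level-$1$ smearedness of $f$ (via a sharp level-$1$ inequality) to show that a large smeared $f$ cannot have such a small zero-restriction. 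Your plan, by contrast, applies the level-$d$ inequality directly to $f$, which needs precisely the $r=O(\sqrt{\log n})$ globalness you cannot obtain. A direct-in-$[k]^n$ proof along your lines could perhaps be salvaged by porting this three-step globalize/cross-intersect/level-$1$ structure, but as it stands Step~2 is not an obstacle to be overcome with sharper hypercontractivity---it is an obstacle the paper avoids by changing the setting.
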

As we show in Section~\ref{sec:intersecting}, for any $2 \log n \leq k \leq \sqrt{n} \log n$, the assertion of the theorem is tight, up to a factor of $O(\log k\log^2 n)$ in the exponent. Theorem~\ref{theo:vector_intersecting_upper_bound_intro} is the first quantitative upper bound in the vector-intersecting setting, after qualitative results were obtained in~\cite{DFR08,EKNS19,keevash2021forbidden}.

\subsection{Follow-up works}

Theorems~\ref{cor:hypercontractivity for global functions - intro} and~\ref{thm:level d intro} were used to obtain a variety of results concerning subsets of $S_n$ and functions on $S_n$, as well as a breakthrough result in number theory. We briefly describe some of these results in this subsection. Other very recent works that use our results alonside other techniques include a hardness theorem for $p$-pass streaming algorithms for the \emph{MaxCSP problem} obtained by Fei, Minzer and Wang~\cite{FeiMW25} and a solution of the Duke–Erd\H{o}s \emph{forbidden sunflower problem} in a significantly extended range of parameters obtained by Kupavskii and Noskov~\cite{KupavskiiN24}.

\remove{
\section{Old}

    The two most useful forms of hypercontracticity in the field of Analysis of Boolean functions is the classical hypercontractivity \OM{given here in its $(q, 2)$ form} and the level-$d$ inequality:
    
    \begin{thm*}[Classical hypercontractivity] \label{thm: classical hypercontractivity}
    Let $q\ge 2$ and let $\rho \le \frac{1}{\sqrt{q-1}}.$ 
    Let $\mu$ be the uniform measure on $\cube$.
    For any $f\in L^2(\cube,\mu)$, we have
    \[
    \lVert T_\rho f\rVert_q\le \lVert f\rVert_2 .
    \]
    \end{thm*}

    \begin{thm*}[Level-d inequality] \label{thm: classical level-d}
        For any $f\in L^2(\cube,\mu)$, Boolean-valued, and for any $d \le 2 \log (1 / \E[f])$, we have
        \[
        W^{=d}[f] \leq \E[f]^2 \left( 2e \frac{\log(1 / \E[f])}{d} \right)^d .
        \]
    \end{thm*}
    \noindent \ref{thm: classical hypercontractivity} is sharp for <> \OM{balls? sub-cubes? both?}, and~\ref{thm: classical level-d} is sharp for sub-cubes (up to a constant).

    \medskip When considering the $p$-biased cube, one cannot hope for bounds of such quality.
    \OM{give bad example for hypercont.}
    Hypercontractivity for the $p$-biased cube holds only with $\rho$ which is much smaller, i.e.~$\rho \le \frac{p^{1/2 - 1/q}}{\sqrt{q-1}}$, or with a correction factor of $(1/p)^{1/2 - 1/q}$~\cite[Section~10.2]{o2014analysis} \OM{check the claim with the factor}.
    The dictator function $f(x) = x_1$ \OM{$= p + \sqrt{p(1-p)}\frac{x_1 - p}{\sqrt{p(1-p)}}$} has 
    \[
    W^{=1}[f] = p(1-p) \approx p
    \]
    and 
    \[
    \alpha^2 \log(1/\alpha) = p^2 \log(1/p) < W^{=1}[f] .
    \]
    One needs to "pay" a factor of at least $\frac{1}{p \log(1/p)}$,  and this is sufficient, as shown by Keller~\cite{K12}.
    
    The problem of finding better forms of hypercontractivity for functions of certain classes was intensively investigated, mainly for application for sharp threshold phenomenon \OM{also for spaces where noises with small noise arise naturally}.
    As demonstrated by the examples above, `local' functions behave badly, hypercontractivity wise. 
    A sense of `globalness' was invented by Keevash, Lifshitz, Long, Minzer~\cite{KLLM21}, and this has opened up the door to many applications.
    Nevertheless, their hypercontractivity bound suffers from a factor depending on the `globalness' quality of the function, and from a large noise requirement. 

    In this paper we present a different approach for measuring `globalness', allowing us to prove a hypercontractivity property without a damaging factor and with less noise. This also allows us to prove a level-$d$ inequality analogue.
    Note that our claims are stated for general product probability space $(\Omega^n, \mu^n)$, and norms and expectations are taken accordingly to $\mu$ \OM{we abuse the notation and write $\mu$ for $\mu^n$}.  Also note that in the common use of Theorem~\ref{cor:hypercontractivity for global functions - intro} $\gamma = \lVert f\rVert_2$ and hence the statement takes the form of the classical hypercontractivity. 

    \OM{this is Corollary~\ref{cor:hypercontractivity for global functions - alternative}. I'm not sure how to reference.}
    \begin{thm} \label{cor:hypercontractivity for global functions - intro}
     Let $r, \gamma>0,$ and let $q \ge 2$. Let $f\colon \Omega^n\to \mathbb{R}$ be an $(r ,\gamma)$-$L_2$-global function. If  
    \begin{enumerate}
        \item $\rho \le \frac{1}{3 \sqrt{2}} \min\left(\frac{1}{r^{\frac{q-2}{q}}q},\frac{1}{\sqrt{q}}\right)$, or
        \item $r \geq 1$ and $\rho \leq \frac{\log q}{16 r q}$,
    \end{enumerate}
then $\lVert T_{\rho}f\rVert_q^q \le \lVert f\rVert_2^2\gamma^{q-2}$.
    \end{thm}

    \begin{thm}\label{prop:ge_gl_lvl_d}
        Let $f \colon \Omega^n \rightarrow \{0, 1\}$ for some probability space $(\Omega^n, \mu)$.
        Let $r > 1, d\le  \frac{1}{4}\log(1/\mathbb{E}[f])$, and suppose that 
        \[\mathbb{E}[f_{S\to x}] \le r^{|S|}  \mathbb{E}[f]\] for all sets $S$ of size $\le d$ and $x\in \Omega^S.$ Then we have,
        \[ \lVert  f^{=d}\rVert_2^2 \leq \mathbb{E}^2[f] \left(\frac{10^5 r^2 \log(1/\mathbb{E}[f])}{d}\right)^d.
        \]
    \end{thm}

\noindent \OM{bring here tight examples.}
Due to this tightness we are able to use our hypercontractivity for many application.

\medskip For example, we can obtain upper bounds on the "sizes" of some classes of intersecting families.
A family $\mathcal{F} $ of subsets of $[n]$ is said to be \textit{intersecting} if every two sets in the family have a nonempty intersection.
We say that a family is \textit{$k$-uniform} (for $k \leq n$) if every set in the family is of size $k$.
We denote by $\binom{[n]}{k}$ the set of all subsets of size $k$ and call it \textit{the $k$-th slice} or \textit{the slice of level $k$}. The $k$-th slice of $\mathcal{F}$ is $\mathcal{F} \cap \binom{[n]}{k}$.
We say that $\mathcal{F}$ is \textit{regular} if every element $i \in [n]$ appears in the same number of sets in $\mathcal{F}$, meaning $\# \{F \in \mathcal{F} \mid i \in F \} = \# \{F \in \mathcal{F} \mid j \in F \}$ for all $i, j \in [n]$.
We say that $\mathcal{F}$ is \textit{transitive-symmetric} if for all $i, j \in [n]$ there exists a permutation $\sigma \in S_n$ such that $\sigma(i) = j$ and $\mathcal{F}$ is preserved under $\sigma$, meaning 
\[
    (x_1, \ldots, x_n) \in \mathcal{F} \ \Leftrightarrow \ (x_{\sigma(1)}, \ldots, x_{\sigma(n)}) \in \mathcal{F} ,
\]
where a vector $(x_1, \ldots, x_n)$ is identified with the set it is its indicator vector, $\{i\in[n] \mid x_i = 1\}$.
For every $p \in [0,1]$ we associate the \textit{$p$-biased} probability measure $\mu_p$ on the collection of subsets $\mathcal{P}([n])$, corresponding to sampling $n$ independent Bernoulli variables with parameter $p$.

It was proven by Kupavskii and Zakharov that a $k$-uniform intersecting family, that is also regular, cannot have size larger than $\exp(-\Omega \left( \frac{n}{k} \right) ) \binom{n}{k}$ (provided that $k < C n / \log n$ for some universal constant $C > 0$)~\cite{kupavskii2022spread}. As a consequence, a transitive-symmetric intersecting family has $\mu_p$ measure smaller than $\exp( - \Omega \left( 1/p \right))$ (provided $p \ge 1/\sqrt{n}$), since every slice of it is regular. 
Our method obtains the result for the biased setting for a more general class of functions, which we call \emph{smeared level-1} families.
More explicitly, let $f = 1_{\mathcal{F}} \colon (\cube, \mu_p) \to \bool$ be the indicator function of a family $\mathcal{F}$. We say that $f$ is intersecting if $\mathcal{F}$ is intersecting.
Define $\delta(f) = \max_i |\hat{f}(\{i\})|$ to be the maximal absolute value of the level-$1$ Fourier coefficient, and define $m(f) = \# \{ i \mid |\hat{f}(\{i\})|^2 \geq \delta^2 / 2\}$, where the Fourier coefficients are w.r.t.~the measure $\mu_p$. We say that $f$ or $\mathcal{F}$ has \emph{smeared level-$1$ coefficients} or that $f$ is a \textit{smeared level-$1$ function} if $m(f) \geq 1/p^2$. We prove the next theorem for smeared level-$1$ intersecting functions.
\begin{theorem}
\label{theo:upper_bound_smeared_and_intersect}
    If $f$ is intersecting, and $1/\sqrt{m(f)} \leq p \leq 1/2$, then
    \[
    \mu_p(f) \leq 32 \exp \left( -0.0001 \cdot \frac{1}{p} \right) .
    \]
\end{theorem}

\noindent We note that a uniform regular family has all its level-$1$ Fourier coefficients equal, hence it has smeared level-$1$ coefficients. Thus we obtain an upper bound of order $\exp(-\Omega \left( \frac{n}{k} \right))$ on its $\mu_{k/n}$ measure.
Since a random set distributed according to $\mu_{k/n}$ has a polynomial probability to be of size exactly $k$, we obtain a bound of order $poly \left(\frac{n}{k} \right) \, \exp(-\Omega \left( \frac{n}{k} \right)) \, \binom{n}{k} = \exp(-\Omega \left( \frac{n}{k} \right)) \, \binom{n}{k}$ on the size of a $k$-uniform regular intersecting family.
Thus reproduces Kupavskii and Zakharov's result, up to constants factors outside and inside the exponent.

\medskip Furthermore, the generality of our result allows us to prove an upper bound on the size of transitive-symmetric vector-intersecting families in $[k]^n$. 
A family $\mathcal{A} \subset [k]^n$ is called \emph{vector-intersecting} if for any $x,y \in \mathcal{A}$, there exists $1\leq i \leq n$ such that $x_i=y_i$.
We say that $\mathcal{A}$ is \textit{transitive-symmetric} if for all $i, j \in [n]$ there exists a permutation $\sigma \in S_n$ such that $\sigma(i) = j$ and $\mathcal{A}$ is preserved under $\sigma$, meaning 
\[
    (x_1, \ldots, x_n) \in \mathcal{A} \ \Leftrightarrow \ (x_{\sigma(1)}, \ldots, x_{\sigma(n)}) \in \mathcal{A} .
\]
Using a simple reduction between the vector-intersecting transitive-symmetric families in $[k]^n$ (endowed with the uniform measure) and smeared function on the biased cube, we can use Theorem~\ref{theo:upper_bound_smeared_and_intersect} to prove the following theorem.
\begin{theorem}
\label{theo:vector_intersecting_upper_bound_intro}
    Let $n, k$ be natural numbers such that $2 \log n \leq k \leq \sqrt{n} \log n$. Let $\mathcal{A} \subseteq [k]^n$ be a transitive-symmetric vector-intersecting family. Then
    \[
        \frac{|\mathcal{A}|}{k^n} \leq 128 \exp \left( -\frac{0.0001k}{\log n} \right) .
    \]
    For $k > \sqrt{n} \log n$, we have $\frac{|\mathcal{A}|}{k^n} \leq 128 \exp \left( -0.0001\sqrt{n} \right)$.
\end{theorem}

 }


\subsubsection{Intersection problems for families of permutations}

A family $\mathcal{F}$ of permutations is called \emph{$t$-intersecting} if for any $\sigma,\tau \in \mathcal{F}$, there exist $i_1,\ldots,i_t$ such that $\sigma(i_j)=\tau(i_j)$, and is called $(t-1)$-avoiding if for any $\sigma,\tau \in \mathcal{F}$, $|\{i \colon \sigma(i)=\tau(i)\}| \neq t-1$.  
Deza and Frankl~\cite{DF77} conjectured that for all $n \geq n_0(t)$, the maximum size of a $t$-intersecting family is $(n-t)!$. Cameron~\cite{Cam86} conjectured that equality is obtained only for the families of the form $\mathcal{F}=\{\sigma \colon \sigma(i_1)=j_1,\ldots,\sigma(i_t)=j_t\}$, for some $i_1,\ldots,i_t,j_1,\ldots,j_t$. Ellis~\cite{Ellis14} conjectured that the same holds for $(t-1)$-avoiding families. 

In recent years, both problems attracted significant attention. For $t$-intersecting families, Ellis, Friedgut and Pilpel~\cite{EFP11}, Ellis and Lifshitz~\cite{EL22}, and Kupavskii and Zakharov~\cite{kupavskii2022spread} showed that the conjectures of Deza and Frankl and of Cameron hold for all $t$ up to  $O(\log \log n)$, $O(\log n/\log \log n)$, and $O(n/\log^2 n)$, respectively. For $(t-1)$-avoiding families, Ellis~\cite{Ellis14}, Ellis and Lifshitz~\cite{EL22}, and Kupavskii and Zakharov~\cite{kupavskii2022spread} proved the conjectures for $t$ up to $2$, $O(\log n/\log \log n)$ and $\tilde{O}(n^{1/3})$, respectively. Each of the works~\cite{EFP11,EL22,kupavskii2022spread} introduced entirely new tools into the study of these problems. 

In~\cite{KLMS23}, Keller, Lifshitz, Minzer and Sheinfeld used Theorem~\ref{thm:level d intro} to show that the conjectures of Deza and Frankl and of Cameron hold for all $t \leq O(n)$ in the $t$-intersecting setting and for all $t \leq O(\sqrt{n}/\log n)$ in the $(t-1)$-avoiding setting, thus obtaining a significant advancement on both problems. In particular, they proved the following.
\begin{thm}[{\cite[Theorem~1]{KLMS23}}]
    There exists $C>0$ such that the following holds for all $t \in \mathbb{N}$ and all $n>Ct$. Let $\mathcal{F} \subset S_n$ be a $t$-intersecting family of permutations. Then $|
    \mathcal{F}|\leq (n-t)!$. Furthermore, if $|\mathcal{F}|>0.75(n-t)!$ then $\mathcal{F}$ is contained in a family of the form $\mathcal{F'}=\{\sigma \colon \sigma(i_1)=j_1,\ldots,\sigma(i_t)=j_t\}$, for some $i_1,\ldots,i_t,j_1,\ldots,j_t$.
\end{thm}

\subsubsection{Hypercontractivity and level-$d$ inequality in the symmetric group} 
While the symmetric group $S_n$ is not a product space, it behaves like the product space $[n]^n$ (endowed with the uniform measure) in various aspects. In particular, Filmus, Kindler, Lifshitz, and Minzer~\cite{filmus2020hypercontractivity} found a way to deduce a hypercontractive inequality for functions over $S_n$ from a corresponding hypercontractive inequality for $[n]^n$. However, their reduction was lossy in the parameters. 

In~\cite{keevash2023sharp}, Keevash and Lifshitz found a way to make this reduction essentially lossless and obtained analogues of our main results (Theorems~\ref{cor:hypercontractivity for global functions - intro} and~\ref{thm:level d intro}) for functions over $S_n$, by reducing each result for $S_n$ to the corresponding statement that we proved for $[n]^n$.  
To present the results of~\cite{keevash2023sharp}, a few more definitions are needed. A function on $S_n$ is said to be of \emph{degree} $d$ if it can be written as a linear combination of products of $d$ \emph{dictators} $1_{\sigma(i)=j}$. Given $m$-tuples of distinct coordinates $I,J$ the function $f$ can be restricted to the set of permutations sending $I$ to $J$. The restriction is denoted by $f_{I\to J}$. 
The function $f$ is said to be $(r,\gamma)$-\emph{global} if for any $m$ and for all $m$-tuples $I,J$, we have $\lVert f_{I\to J}\rVert_2 \le r^{m}\gamma$. It is said to be $r$-global if it is $(r,\gamma)$-global for $\gamma=\lVert f\rVert_2$. The authors of~\cite{keevash2023sharp} proved the following analogue of Theorem~\ref{cor:hypercontractivity for global functions - intro}.
\begin{thm}[{\cite[Theorem~1.10]{keevash2023sharp}}]
\label{thm:hypercontractivity for global functions in the symmetric group}
There exists a family of self adjoint
operators $\mathrm{T}_{\rho}$ with the following properties.
\begin{enumerate}
\item (Large degree $d$ eigenvalues) If $f\colon S_{n}\to\mathbb{R}$ is a function of degree $\le d$ for $d \leq 10^{-5}n$, then 
\[
\left\langle \mathrm{T}_{\rho}f,f\right\rangle \ge\left(\frac{\rho}{72}\right)^{d}\lVert f\rVert_{2}^{2}.
\]
 \item Let $q\ge 2$. If $f$ is $\left(r,\gamma\right)$-global, and $\rho\le\frac{\log q}{32rq}$, then 
\[
\lVert \mathrm{T}_{\rho}f\rVert_{q}^{q}\le\gamma^{q-2}\lVert f\rVert_{2}^{2}.
\]
In particular, if $f$ is $r$-global, and $\rho\le\frac{\log q}{32rq}$, then 
\[
\lVert \mathrm{T}_{\rho}f\rVert_{q} \leq \lVert f\rVert_2.
\]
\item The operator $\mathrm{T}_{\rho}$ commutes with the action of $S_{n}$
from both sides. 
\end{enumerate}
\end{thm}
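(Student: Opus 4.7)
The plan is to construct $\mathrm{T}_\rho$ on $S_n$ by transferring the product-space noise operator from $[n]^n$ via a carefully chosen intertwiner, and then to deduce each of the three claims from its product-space analogue, which is already in hand through Theorem~\ref{cor:hypercontractivity for global functions - intro}. All three properties then become separate requirements on the intertwiner plus a single appeal to the product-space statement.

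I would first build a linear map $\iota\colon L^2(S_n)\to L^2([n]^n)$ enjoying three features:
(a) $\|\iota f\|_2$ is comparable to $\|f\|_2$ up to a constant independent of $n$;
(b) $\iota$ sends $S_n$-degree-$\le d$ functions into $[n]^n$-degree-$\le d$ functions;
(c) $\iota$ is equivariant under both left and right multiplication by $S_n$ (symmetrizing over $S_n\times S_n$ if necessary).
A natural candidate uses the fact that, by definition, a degree-$\le d$ function on $S_n$ is spanned by products of $d$ dictators $1_{\sigma(i)=j}$, symbols which literally make sense on $[n]^n$; this gives a level-by-level lift via the Hoeffding decompositions on the two spaces. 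One then sets
\[
\mathrm{T}_\rho \defeq \iota^{\ast}\,\mathrm{T}_\rho^{[n]^n}\,\iota,
\]
with $\mathrm{T}_\rho^{[n]^n}$ the standard product-space noise operator. Self-adjointness is automatic, and property (3) of the theorem follows from (c).

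For property (1), recall that $\mathrm{T}_\rho^{[n]^n}$ acts on the level-$k$ Hoeffding subspace with eigenvalue $\rho^k$. Hence for $f$ of degree $\le d$ on $S_n$, property (b) gives $\iota f$ of degree $\le d$ on $[n]^n$, so $\langle \mathrm{T}_\rho^{[n]^n}\iota f,\iota f\rangle \ge \rho^d \|\iota f\|_2^2$; combined with (a), this yields $\langle \mathrm{T}_\rho f,f\rangle \ge (\rho/C)^d\|f\|_2^2$ after absorbing the constant of (a) into $C$. For property (2), granted that $\iota$ transfers globalness -- namely, that if $f$ is $(r,\gamma)$-global on $S_n$ then $\iota f$ is $(O(r),O(\gamma))$-global on $[n]^n$ in the restriction sense of Theorem~\ref{cor:hypercontractivity for global functions - intro} (or its $\gamma$-parametrized variant alluded to in the paper) -- the product-space hypercontractive inequality gives the required bound on $\|\mathrm{T}_\rho^{[n]^n}\iota f\|_q$ under the noise condition $\rho\le \log q/(Crq)$, and one pushes the bound down through $\iota^{\ast}$ by observing that $\iota^{\ast}$ arises from a conditional-expectation-type averaging and is therefore an $L^q$-contraction by Jensen's inequality.

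The main obstacle is the globalness transfer under $\iota$. Globalness on $S_n$ is expressed in terms of restrictions ``send an $m$-tuple $I$ to $J$,'' which are strictly more rigid than product-space restrictions ``set coordinates $I$ to values $J$,'' since the latter do not force injectivity. The technical heart of the argument is thus to control the $L^2$-norm of an arbitrary product-space restriction of $\iota f$ by the worst $L^2$-norm over the corresponding injective restrictions of $f$, losing at most a bounded factor per coordinate. Here one expects to use the precise level-by-level compatibility built into $\iota$, together with a combinatorial argument that any non-injective restriction in $[n]^n$ can, after symmetrizing over $S_n\times S_n$, be split into injective pieces whose norms are already controlled by the $S_n$-globalness hypothesis.
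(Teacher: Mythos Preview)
This theorem is not proved in the present paper. It is stated in the ``Follow-up works'' subsection of the introduction and is explicitly attributed to Keevash and Lifshitz~\cite{keevash2023global}. The paper's only comment on its proof is the sentence: ``Keevash and Lifshitz found a way to make this reduction essentially loss-less and obtained analogues of our main results\ldots\ by reducing each analogue result for $S_n$ to the corresponding statement that we proved for $[n]^n$.'' There is no proof, sketch, or construction of $\mathrm{T}_\rho$ on $S_n$ given here, so there is nothing detailed to compare your proposal against.

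That said, your high-level strategy---build an intertwiner $\iota\colon L^2(S_n)\to L^2([n]^n)$ that respects degree, is approximately isometric, and is bi-equivariant, then set $\mathrm{T}_\rho=\iota^*\,\mathrm{T}_\rho^{[n]^n}\,\iota$ and invoke Theorem~\ref{cor:hypercontractivity for global functions - intro} (or rather Corollary~\ref{cor:hypercontractivity for global functions - alternative})---is precisely the shape of the reduction the paper alludes to, and is in line with the earlier lossy version of Filmus, Kindler, Lifshitz, and Minzer~\cite{filmus2020hypercontractivity}. You have also correctly identified the crux: showing that $(r,\gamma)$-globalness on $S_n$ (defined via injective restrictions $I\to J$) transfers to product-space globalness of $\iota f$ with only bounded loss per coordinate. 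This is exactly the step that the paper says~\cite{keevash2023global} makes ``essentially loss-less,'' and it is genuinely nontrivial; your sketch of splitting non-injective restrictions into injective pieces via the $S_n\times S_n$ symmetry is plausible but is the place where the real work lies. To assess whether your argument goes through, you would need to consult~\cite{keevash2023global} directly.
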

A function $f$ on $S_n$ is said to be $(r,\gamma_1,\gamma_2, d)$-\emph{biglobal} if for any $m \le d$ and for all $m$-tuples $I,J$ we have  $\lVert f_{I\to J}\rVert_2\le r^{m}\gamma_2$ and $\lVert  f_{I\to J} \rVert_1 \le r^{m}\gamma_1$. 
For a function $f\colon S_n\to \mathbb{R}$, we write $f^{=d}$ for the projection of $f$ onto the space of degree $\le d$ functions that are orthogonal to all degree $\le d-1$ functions.
Using Theorem \ref{thm:non bool lvl_d} (which is a slightly more general version of Theorem \ref{thm:level d intro}), the authors of~\cite{keevash2023sharp} proved the following strengthened analogue of Theorem~\ref{thm:level d intro}. 
\begin{thm}[{\cite[Theorems~4.1 and~1.8]{keevash2023sharp}}]\label{thm:level d on the symmetric group} 
For any $(r,\gamma_1,\gamma_2,d)$-biglobal function $f \in L^2(S_n)$, where $r>1$, $\gamma_2>\gamma_1>0$, and for any $d \leq \min(\frac{1}{4}\log(\tfrac{\gamma_2}{\gamma_1}),10^{-5}n)$, we have 
\[
\lVert f^{=d}\rVert_2^2 \leq \gamma_1^2 (10^6 r^2 d^{-1} \log(\tfrac{\gamma_2}{\gamma_1}))^d.
\]
In particular, there exists $C>0$ such that if $f \colon S_n \to \{0,1\}$ is $r$-global and $d \leq \min(\frac{1}{8}\log(\tfrac{1}{\mathbb{E}[f]}),10^{-5}n)$, then\footnote{Note that due to the difference between the $L_2$-based definition of globalness used in~\cite{keevash2023sharp} and the $L_1$-based globalness notion used in Theorem~\ref{thm:level d intro}, the term $r^4$ in Theorem~\ref{thm:level d on the symmetric group} corresponds to the term $r^2$ in Theorem~\ref{thm:level d intro}.}  
\[ \lVert  f^{=d}\rVert_2^2 \leq \mathbb{E}^2[f] \left(\frac{Cr^4 \log(1/\mathbb{E}[f])}{d}\right)^d.
\]
\end{thm}
\noindent This allowed accomplishing the following results.

\subsubsection{An analogue of Bogolyubov's lemma for $A_n$}

Bogolyubov's lemma for finite fields says (in the special case of binary fields) that if $A\subseteq \mathbb{F}_2^n$ has density $\ge \alpha,$ then $A+A+A+A$ contains an affine linear subspace of codimension $O(1/\alpha)$ (see, e.g.,~\cite{zhao2023graph}). It is one of the main ingredients in the classical proof of the Freiman-Ruzsa theorem. The main disadvantage of using Bogolyubov's lemma in the Abelian setting is the fact that the density of the subspace whose existence is guaranteed is exponentially smaller than the density of the original set. Had the dependency been polynomial, it would have implied the polynomial Freiman--Ruzsa conjecture. 
Keevash and Lifshitz \cite{keevash2023sharp} used Theorem \ref{thm:level d intro} to prove an analogue of Bogolyubov's lemma for the alternating group, with subspaces replaced by $m$-umvirates (that is, families of the form $\{\sigma \in S_n \colon \sigma(i_1)=j_1,\ldots,\sigma(i_m)=j_m\}$, for some fixed $i_1,\ldots,i_m,j_1,\ldots,j_m$). In their setting, the measure of the $m$-umvirates is polynomial in the density, provided that the density is at least $e^{-cn^{0.99}}$. 
We say that a set $A$ is \emph{symmetric} if $A=A^{-1}.$ We write $AB = \{ab \colon \,a\in A,B\in B\}$, and $A^i$ for the $i$-fold product of $A$ with itself. 
\begin{thm}[{\cite[Theorem~1.3]{keevash2023sharp}}]
There exists $c>0$ so that for any integer $M<\log n$ and any symmetric set $A \subset A_n$ having density $\mu(A) \geq e^{-cn^{1-1/M}}$, the set $A^{8M}$ contains a subgroup $U_I$ with $\mu(U_I) \geq \mu(A)^{5M}$.
\end{thm}

\subsubsection{An analogue of Roth's theorem for $S_n$}

Keevash and Lifshitz \cite{keevash2023sharp} also considered the problem of determining the largest set $A\subseteq S_n$ that does not contain a solution to the equation $xz=y^2$ -- that is, the problem of finding the strongest possible analogue for $S_n$ of the classical theorem of Roth~\cite{Roth53} on arithmetic progressions.  

For general groups, a qualitative analogue of Roth’s theorem (as well as a similar result for more general equations) was established by Kr\'{a}l', Serra and Vena~\cite{KralSV09}. Their ‘regularity type’ bounds for the maximum density were improved by Sanders~\cite{Sanders17} to doubly logarithmic bounds in the specific case of the equation $xz=y^2$. Keevash and Lifshitz used Theorem \ref{thm:level d on the symmetric group} to obtain 
an improvement of the density upper bound in $S_n$ from $(\log n)^{-c}$ to $n^{-c\log n}$. 
\begin{thm}[{\cite[Theorem~1.5]{keevash2023sharp}}]
    There is an absolute constant $c>0$ so that any $A \subset S_n$ with no solution for the equation $xz=y^2$, has $\mu(A) \leq n^{-c\log n}$.
\end{thm}



\subsubsection{Diameter bounds for $S_n$}

The distance between two vertices in a graph is the number of edges in the shortest path that connects them. The \emph{diameter} of a graph is the largest distance between two vertices. 
For a group $G$ and $A \subset G$, the \emph{covering number} of $A$ is the minimal $\ell$ for which $A^{\ell}=G.$ The diameter of a Cayley graph $\mathrm{Cay}(G,A)$ is equal to the covering number of $A$.

The problem of determining covering numbers of sets $A\subseteq G$ is well studied for finite simple groups. A striking result of Liebeck and Shalev \cite{liebeck2001diameters} solves this problem completely when $G$ is a finite simple group and $A$ is a conjugacy class. They showed that the covering number of such conjugacy classes is always $\Theta\left(\log_{|A|}{|G|}\right).$ 

For the groups $S_n$ and $A_n$, a well-known longstanding folklore conjecture asserts that the diameter of any connected Cayley graph on $A_n$ or on $S_n$ is polynomial in $n$ (see~\cite{BS92}). A celebrated result of Helfgott and  Seress~\cite{helfgott2014diameter} shows a quasi-polynomial upper bound on the diameter, while Nikolov and Pyber~\cite{nikolov2011product} showed that Gowers'~\cite{gowers2008quasirandom} result about product mixing implies that any set in $A_n$ of density $\ge (n-1)^{-1/3}$ has covering number 3. 

Following Nikolov and Pyber~\cite{nikolov2011product}, Keevash and Lifshitz \cite{keevash2023sharp} considered the following more general problem: What is the largest possible diameter of a connected Cayley graph $\mathrm{Cay}(A_n, A)$ for a set $A$ of density $\alpha$? One example to keep in mind is the union of the $t$-umvirates of all permutations fixing $\{1,\ldots ,t\}$ with the set of transpositions $\{(1,2),(2,3),\ldots, (t,t+1)\},$ whose covering number is $\Theta(t^2).$ The authors of~\cite{keevash2023sharp} showed that for a fixed $\epsilon$ and $\alpha \ge e^{-n^{1-\epsilon}}$, the above construction is sharp up to a constant. Specifically, they showed the following.
\begin{thm}[{\cite[Theorem~1.1]{keevash2023sharp}}]
    There exist absolute constants $C,c > 0$ such that if $A \subset A_n$ is symmetric and satisfies $\mu(A) \geq e^{-cn^{1-\epsilon}}$ where $\epsilon>\frac{1}{\log n}$, then the diameter of the Cayley graph $\mathrm{Cay}(A_n, A)$ is at most $8\epsilon^{-1}+C\epsilon^{-2}(\log_n(1/\mu(A))^2$.
\end{thm}
This essentially solves this problem completely for sets $A$ of density $\ge e^{-cn}.$   

\subsubsection{Character bounds in the symmetric group}
 Lifshitz and Marmor~\cite{Lifshitz2023bounds} used Theorem \ref{thm:hypercontractivity for global functions in the symmetric group} to obtain the following bound on the norms of the characters.
\begin{thm}[{\cite[Theorem~1.12]{Lifshitz2023bounds}}]
There exists an absolute constant $C>0$, such that the following holds. Let $d$ be a positive integer and let $q\ge 2$.  Let $\lambda$ be a partition of $n$ and suppose that $\lambda_1=n-d$.  Then 
$\lVert \chi_\lambda\rVert_q \le \left(\frac{Cq}{\log q}\right)^d\left(\frac{\chi_{\lambda}(1)d^d}{n^d}\right)^{1-2/q}.$ 
\end{thm}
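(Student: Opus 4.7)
The plan is to exploit two features of $\chi_\lambda$ in tandem. First, since $\lambda_1 = n-d$, the character $\chi_\lambda$ lies in the degree-$\le d$ subspace of $L^2(S_n)$ (this is the standard correspondence between partitions with $\lambda_1 \ge n-d$ and the filtration spanned by products of $d$ dictators). Second, because the operator $\mathrm{T}_\rho$ from Theorem~\ref{thm:hypercontractivity for global functions in the symmetric group} commutes with both the left and right $S_n$-actions (part (3)), Schur's lemma forces $\mathrm{T}_\rho$ to act as a scalar on each isotypic component of the bi-regular representation, so in particular $\mathrm{T}_\rho\chi_\lambda = c_\lambda \chi_\lambda$ for some scalar $c_\lambda$.

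Applying part (1) of Theorem~\ref{thm:hypercontractivity for global functions in the symmetric group} to $f = \chi_\lambda$ yields
\[
c_\lambda = \frac{\langle \mathrm{T}_\rho \chi_\lambda,\chi_\lambda\rangle}{\|\chi_\lambda\|_2^2} \ge \left(\frac{\rho}{C}\right)^d,
\]
using $\|\chi_\lambda\|_2 = 1$ (Schur orthogonality). Assuming further that $\chi_\lambda$ is $(r,\gamma)$-global for parameters to be chosen, part (2) of the same theorem gives $\|\mathrm{T}_\rho\chi_\lambda\|_q^q \le \gamma^{q-2}$ whenever $\rho \le \log q/(Crq)$. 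Since $\chi_\lambda$ is an eigenvector, $\|\mathrm{T}_\rho\chi_\lambda\|_q = c_\lambda\|\chi_\lambda\|_q$, and combining the two bounds yields $\|\chi_\lambda\|_q \le \gamma^{1-2/q}/c_\lambda$. Taking the maximal admissible noise $\rho = \log q/(Crq)$ then gives $1/c_\lambda \le (C^2 rq/\log q)^d$, which matches the stated bound exactly once we can establish globalness with $r = O(1)$ and $\gamma = \chi_\lambda(1)\,d^d/n^d$, the constant $C^2 r$ being absorbed into a new absolute constant.

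The principal obstacle is thus the representation-theoretic input: proving $\chi_\lambda$ is $\bigl(r,\,\chi_\lambda(1)\,d^d/n^d\bigr)$-global with $r$ absolute. Concretely, for every pair of $m$-tuples $I,J$ of distinct coordinates one needs $\|\chi_{\lambda,I\to J}\|_2 \le r^m\,\chi_\lambda(1)\,d^d/n^d$; after left-translation by a fixed $\sigma_0$ with $\sigma_0(I)=J$, this becomes the uniform-in-$\sigma_0$ variance bound $\mathbb{E}_{\tau \in S_{n-m}}|\chi_\lambda(\sigma_0\tau)|^2 \le r^{2m}\chi_\lambda(1)^2(d/n)^{2d}$, where $S_{n-m}$ is the pointwise stabilizer of $I$. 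The natural tool is the branching rule $\chi_\lambda|_{S_{n-m}} = \sum_\mu a_{\lambda\mu}\chi_\mu$ together with Schur orthogonality on $S_{n-m}$, which reduces the task to controlling sums involving the multiplicities $a_{\lambda\mu}$ and the dimensions $\chi_\mu(1)$. The factor $(d/n)^d$ should emerge from the hook-length formula, reflecting the well-known heuristic that characters of representations with $\lambda_1$ close to $n$ are concentrated near the identity.
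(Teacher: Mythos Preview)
The paper does not contain a proof of this theorem. It appears in the ``Follow-up works'' subsection and is merely cited as a result of Lifshitz and Marmor~\cite{Lifshitz2023bounds}, with the paper stating only that they ``used Theorem~\ref{thm:hypercontractivity for global functions in the symmetric group} to obtain the following bound on the norms of the characters.'' There is thus no proof in the present paper to compare your proposal against.

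That said, your high-level strategy is consistent with what the paper indicates: you correctly identify that $\chi_\lambda$ has degree $\le d$ when $\lambda_1=n-d$, that the bi-equivariance of $\mathrm{T}_\rho$ forces it to act as a scalar on each isotypic component, and that combining parts (1) and (2) of Theorem~\ref{thm:hypercontractivity for global functions in the symmetric group} yields the desired shape of bound once globalness is known. This is presumably the skeleton of the argument in~\cite{Lifshitz2023bounds}.

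However, your proposal is not a proof: you explicitly flag the globalness estimate $\|(\chi_\lambda)_{I\to J}\|_2 \le r^m \chi_\lambda(1)\, d^d/n^d$ with $r=O(1)$ as ``the principal obstacle'' and then only gesture at branching rules and hook-length heuristics. This is indeed where all the substance lies, and your sketch does not come close to establishing it. In particular, controlling $\sum_\mu a_{\lambda\mu}^2$ and the interplay with $\chi_\mu(1)$ uniformly over all cosets $\sigma_0 S_{n-m}$ requires genuine work that is absent here. So the proposal correctly identifies the architecture but leaves the load-bearing representation-theoretic input unproved.
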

They also showed that their bound is sharp, up to the value of the constant $C$, unless $d$ is tremendously large with respect to $q$ (see~\cite[Theorem~1.13]{Lifshitz2023bounds}). This allowed the authors of~\cite{Lifshitz2023bounds} to show that there exits an absolute constant $c>0$, such that for every conjugacy class $A$ of $S_n$ of density $\frac{|A|}{n!} \geq e^{-c\sqrt{n}\log n}$, the mixing time of the Cayley graph $\mathrm{Cay}(A_n,A)$ is at most two. The lower bound $e^{-c\sqrt{n}\log n}$ on the density of $A$ is optimal, up to the value of the implicit constant $c$. This allowed them to improve various results of Larsen and Shalev \cite{larsen2008characters} on character bounds, diameter bounds, and mixing of conjugacy classes. 

\subsubsection{New upper bound for the Furstenberg-S\'{a}rk\"{o}zy problem}

The well-known Furstenberg-S\'{a}rk\"{o}zy problem asks, what is the maximal size $s(n)$ of a set $S \subset [n]$ that does not contain two elements whose difference is a perfect square. Lov\'{a}sz conjectured that $s(n)=o(n)$. This conjecture was proved independently by Furstenberg~\cite{Furstenberg77} and by S\'{a}rk\"{o}zy~\cite{Sarkozy78a,Sarkozy78b}. The proof of Furstenberg uses ergodic-theoretic methods, and thus, does not provide any quantitative bound on $s(n)$. S\'{a}rk\"{o}zy's proof uses Fourier-analytic methods and provides the bound $s(n) \ll n \frac{(\log \log n)^{2/3}}{(\log n)^{1/3}}$. Pintz, Steiger and Szemer\'{e}di~\cite{PSS88}, and consequently, Bloom and Maynard~\cite{BM22} improved the bound upper bound to $n (\log n)^{-\frac{1}{12}\log \log \log \log n}$, and to $n (\log n)^{-c\log \log \log n}$, respectively. The best known lower bound is $n^{0.733412}$, proved by Lewko~\cite{Lewko15}. The question whether $s(n)<n^{1-c}$ for some $c>0$ is widely open.

In a recent breakthrough, Green and Sawhney~\cite{GS24} used our results to obtain a hugely improved upper bound of $s(n) \leq n \exp(-c(\log n)^{1/2})$, for an absolute constant $c$. The main technical tool used by Green and Sawhney is the following variant of Theorem~\ref{thm:level d intro}:
\begin{thm}[{\cite[Theorem~1.2]{GS24}}]
\label{thm:GS}
    Set $C_0:= 2^{13}$. Let $\alpha \in (0,1/2)$ and $n \geq 1$ be parameters with $\alpha>2n^{-1/2}$. Let $\mathcal{Q}$ be a set of pairwise coprime positive integers such that $\max_{q \in \mathcal{Q}} q \leq n^{1/(32\log(1/\alpha))}$.
    Let $1 \leq d \leq 2^{-7}\log(1/\alpha)$. Let $f \colon  [n] \to \mathbb{C}$ be such that $|f(x)| \leq 1$ for all $x$. Then either
    \[
    \sum_{\substack{S \subset \mathcal{Q} \\ |S|=d}} \sum_{\substack{a \bmod (\prod_{q \in S}q) \\ q \in S \Rightarrow q \nmid a}} \left| \hat f \left(\frac{a}{\prod_{q \in S}q}\right) \right|^2 \leq \alpha^2 n^2 \left(\frac{C_0 \log(1/\alpha)}{d}\right)^d,
    \]
    or else for some set $S \subset \mathcal{Q}$, $1 \leq |S| \leq 2\log(1/\alpha)$, and for some $r \in \mathbb{Z}$, the average of $|f(x)|$ on the progression $P=\{x \in [n] \colon x \equiv r \bmod(\prod_{q \in S} q) \}$ is greater than $2^{|S|}\alpha$. 
\end{thm}
Essentially, Theorem~\ref{thm:GS} asserts that either $f$ satisfies a level $d$ inequality, or it is not global, in the sense that its average is increased significantly when restricted to some `not-too-sparse' arithmetic progression. The proof uses Corollary~\ref{cor:hypercontractivity for global functions - alternative}, as well as techniques from Section~\ref{sec:level-d}.

The method of~\cite{GS24} allows applying our results to functions over $[n]$, by introducing a product space structure using a `basis' $\mathcal{Q}$ of pairwise coprime small factors. Thus, it opens the way for further applications of our results to number theory.

\subsection{Organization of the paper}

In Section~\ref{sec: background} we present basic notions and lemmas that will be used throughout the paper. In Section~\ref{sec:Gaussian} we show that norms of the noise operator can be bounded using the norms of certain `Gaussian encodings' of the function. The proofs of the sharp hypercontractive inequality for global functions (namely, Theorem~\ref{cor:hypercontractivity for global functions - intro}) and of the sharp level-$d$ inequality (namely, Theorem~\ref{thm:level d intro}) are presented in Sections~\ref{sec:hypercontractive} and~\ref{sec:level-d}, respectively. Finally, we apply our results to bound the size of smeared intersecting families of sets and vectors in Section~\ref{sec:intersecting}.
\section{Background} \label{sec: background}

\subsection{Functions over the hypercube}

As was mentioned above, the classical results in discrete Fourier analysis were obtained for functions $f \colon \{0,1\}^n \to \mathbb{R}$, where the hypercube $\{0,1\}^n$ is endowed with the product measure $\mu_p(x)=p^{|\{i \colon x_i=1\}|}(1-p)^{|\{i \colon x_i=0\}|}$. While in this paper we consider functions over general finite product spaces, we begin with presenting the hypercube case, and then develop the notions for the general case as generalizations of the `hypercube case' notions.

\paragraph{Fourier expansion.} Denote $[n]=\{1,2,\ldots,n\}$. For each $i \in [n]$, let $\chi_i(x)=
\frac{x_i-p}{\sqrt{p(1-p)}}$, and for each $S \subset [n]$, let $\chi_S=\prod_{i\in S} \chi_i$. It can be easily verified that the set of characters $\{\chi_S\}_{S \subset [n]}$ is an orthonormal basis for the space of functions $\mathcal{U}_n = \{f \colon \{0,1\}^n \to \mathbb{R}\}$ with respect to the standard inner product $\langle f,g \rangle = \mathbb{E} [fg]$. Hence, each $f \in \mathcal{U}_n$ can be uniquely represented as $f=\sum_{S \subset [n]} \hat f(S)\chi_S$, where $\hat f(S)=\langle f,\chi_S \rangle$. This expansion is called the Fourier expansion of $x$. 

\paragraph{Restrictions, derivatives, and influences.} For $f \in \mathcal{U}_n$ and $S \subset [n]$, the \emph{restriction} $f_{S \to x} \colon \{0,1\}^{[n] \setminus S} \to \mathbb{R}$ is defined as $f_{S \to x}(y)=f(x,y)$ for all $y \in \{0,1\}^{[n] \setminus S}$. The \emph{discrete derivative} in the $i$'th direction is defined as $D_i(f)=\sqrt{p(1-p)} \cdot (f_{i \to 1}-f_{i \to 0})$. The \emph{influence} of the $i$'th coordinate on $f$ is $I_i(f)=||f_{i \to 1}-f_{i \to 0}||_2^2$. The discrete derivative can be applied sequentially, to obtain 
\[D_S(f) =\sqrt{p(1-p)}^{|S|}\sum_{x \in \{0,1\}^S} (-1)^{|S|-|\{i \colon x_i=1\}|}f_{S \to x}.\]
Discrete derivatives (and subsequently, influences) have a convenient representation in terms of the Fourier expansion:
\[D_S(f)=\sum_{T:S \subset T} \hat f(T)\chi_{T \setminus S}, \qquad \mbox{and} \qquad I_i(f)=\frac{1}{p(1-p)}\sum_{S:i \in S} \hat f(S)^2.\]

\paragraph{The noise operator.} For $0<\rho<1$, let $N_{\rho}(x)$ be obtained from $x$ by leaving each coordinate $x_i$ of $x$ unchanged with probability $\rho$ and replacing it by a random $y_i \sim \mu_p(\{0,1\})$ with probability $1-\rho$. The noise operator $T_{\rho} \colon \mathcal{U}_n \to \mathcal{U}_n$ is defined by $T_{\rho} f(x) = \mathbb{E}_{y \sim N_{\rho}(x)} f(y)$. It can be easily verified that the eigenvectors of the noise operator are precisely the characters $\{\chi_S\}$, and that for any $f = \sum_S \hat f(S) \chi_S$, we have $T_{\rho} f = \sum_S \rho^{|S|} \hat f(S) \chi_S$.   

For more details on the above classical definitions and their properties, the reader is referred to~\cite{o2014analysis}.

\subsection{Functions over general finite product spaces}

Let $(\Omega, \mu)$ be a finite probability space, and let $(\Omega^n, \mu^n)$ be the product space, where $\mu^n$ is the product probability measure. We will mostly omit the superscript $n$ in the expression $\mu^n$, when the dimension is clear from context.
For $S \subseteq [n]$ we denote by $\Omega^S$ the projection of $\Omega^n$ that leaves only the coordinates in $S$, and $\mu^S$ will be the corresponding projected measure. We denote by $L^2(\Omega, \mu)$ the space of real-valued functions on $\Omega$. 

In the case of the hypercube, for $f=\sum_S \hat f(S) \chi_S$, the term $\hat f(S)\chi_S$ represents the `part of $f$ which depends precisely on $S$'. The Efron-Stein decomposition generalizes this property to functions over general spaces. It will be convenient for us to introduce the Efron-Stein decomposition using the two following operators.

\paragraph{Laplacians and averages.} In the one-dimensional case, the \emph{Laplacian} operator $L$ is defined using the expectation operator $E$: For any $g \colon \Omega \to \mathbb{R}$,
\[E g = \E[g], \quad \mbox{and} \quad L g = g - E g .\]
For functions $\{f_i\colon \Omega \to \mathbb {R}\}_{i \in [n]}$, we write $f_1 \otimes \cdots \otimes f_n$ for the function $f \colon \Omega^n \to \mathbb{R}$ defined by $x\mapsto \prod_{i=1}^n f_i(x_i).$ For linear operators $T_i\colon L^2(\Omega, \mu)\to L^2(\Omega, \mu)$ we define $T_1\otimes \cdots \otimes T_n \colon L^2(\Omega^n, \mu) \to L^2(\Omega^n, \mu)$ by setting $f_1\otimes \cdots \otimes f_n\mapsto T_1 f_1 \otimes \cdots \otimes T_n f_n$ and extending linearly.   
    
We can now generalize the expectation operator and the Laplacian to the $n$-dimensional case by taking tensors with the identity operator. For each $i \in [n]$, we set
    \[
        E_i f = (I \otimes \cdots \otimes \underbrace{E}_i \otimes \cdots \otimes I) f, \quad L_i f = (I \otimes \cdots \otimes \underbrace{L}_i \otimes \cdots \otimes I) f.
    \]
More generally, for $S \subset [n]$,
    \[
        E_S[f](x) = \bigotimes_{i\in S} E_i\otimes \bigotimes_{i\notin S} \text{id}, \quad 
        L_S f = \bigotimes_{i\in S} L_i \otimes \bigotimes_{i\in S^c} \text{id}.
    \]
Combinatorially, the operator $E_S$ is the expected value obtained when resampling the coordinates in $S$ according to $\mu$. The Laplacians $L_S$ have a combinatorial description in terms of the averaging operators $E_S$: For any $S\subset [n]$, we have $L_S = \sum_{T \subseteq S} (-1)^{|T|} E_T$.

\paragraph{The Efron-Stein decomposition.} In the one-dimensional case, the space $L^2(\Omega, \mu)$ can be partitioned into two parts: $V_{=\varnothing}$, which consists of constant functions, and $V_{=\{1\}}$, which consists of functions with expectation $0$. The projection on $V_{=0}$ is the averaging operator $E$ and the projection on $V_{=\{1\}}$ is the Laplacian $L=f-\mathbb{E}[f].$ 
    
To generalize this partition to the $n$-dimensional case, we may use tensorization to obtain the operators 
    \[
    \{E_{T^c} L_T\}_{T\subseteq [n]} =\bigotimes_{i\in T} L_i \otimes \bigotimes_{i\in T^c} E_i.
    \]
Using these operations, we may obtain an orthogonal decomposition of the space $L^2(\Omega^n,\mu)$ into the spaces $V_{=T} = \{ E_{T^c} L_T [f] \colon f\in L^2(\Omega^n, \mu)\}$. Note that the space $V_{=T}$ consists of functions that depend only on the coordinates in $T$, and thus, serves as a generalization of the space $\mathrm{span}(\chi_T)$ from the `hypercube' case, in which we have  
$E_{T^c} L_T [f] = \hat{f}(T) \chi_T$. 

The \emph{Efron-Stein decomposition} of $f \in L^2(\Omega^n,\mu)$ is 
\[
    f = \sum_{T \subset [n]} f^{=T}, \qquad \mbox{where} \qquad f^{=T}=E_{T^c}L_T[f].
\] 
We write $f^{=d}=\sum_{|S|=d}f^{=S}$.
     
Note that unlike the hypercube case, the spaces $V_{=T}$ are no longer 1-dimensional. However, they still convey the notion of `dependence on exactly the variables in $T$', which turns out to be sufficient for our purposes.

The following lemma summarizes basic properties of the Efron-Stein decomposition. The simple proof is omitted.
\begin{lem} \label{lem:laplacian_efron_stein}
Let $V = L^2(\Omega^n, \mu)$. 
\begin{enumerate}
    \item The spaces $V^{=T}$ are an orthogonal decomposition of $L^2(\Omega^n, \mu)$. Therefore, for every $f,g \in V$ we have $\langle f, g\rangle = \sum_{T\subseteq [n]} \langle f^{=T}, g^{=T} \rangle$.

    \item For any $f\in V$, we have
        \[
            L_S f = \sum_{T \supseteq S} f^{=T}, \qquad \mbox{and} \qquad E_S f = \sum_{T \subseteq S^c} f^{=T}.
        \]
\end{enumerate}
\end{lem}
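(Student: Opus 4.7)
The plan is to prove both parts by first establishing a few operator identities on $L^2(\Omega,\mu)$ and then lifting them to $L^2(\Omega^n,\mu)$ via tensorization. First I would observe the one-dimensional identities $E^2 = E$, $L = I - E$, and consequently $EL = LE = 0$, which express that $E$ is an orthogonal projection onto the constants and $L$ is the complementary orthogonal projection. Since operators acting on different coordinates commute, the $n$-dimensional family $\{E_i, L_i\}_{i \in [n]}$ is commuting, and on each coordinate $E_i$ and $L_i$ are orthogonal projections summing to the identity.

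For part (1), I would verify that each $E_{T^c} L_T$ is an orthogonal projection, being a tensor product of orthogonal projections across the coordinates ($E_i$ for $i \in T^c$ and $L_i$ for $i \in T$). To see that distinct projections are orthogonal, note that for $T \neq T'$ there exists a coordinate $i$ in the symmetric difference; on this coordinate the two operators apply $E_i$ and $L_i$ in some order, and since $E_i L_i = L_i E_i = 0$ while operators on different coordinates commute, the composition $(E_{T^c} L_T)(E_{T'^c} L_{T'})$ vanishes. That these projections form a resolution of the identity follows from expanding
\[
\bigotimes_{i \in [n]} (E_i + L_i) = \bigotimes_{i \in [n]} I = I
\]
into a sum over subsets $T \subseteq [n]$ of the operators $E_{T^c} L_T$. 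This immediately yields the orthogonal direct-sum decomposition, and hence the Plancherel-type identity $\langle f,g\rangle = \sum_T \langle f^{=T}, g^{=T}\rangle$.

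For part (2), I would use the same tensor identity, but applied only on the coordinates in $S^c$. Writing each $T \supseteq S$ as $T = S \cup U$ with $U \subseteq S^c$, and using that operators on disjoint coordinate sets commute, one obtains
\[
\sum_{T \supseteq S} E_{T^c} L_T \;=\; \sum_{U \subseteq S^c} L_S \, E_{S^c \setminus U} L_U \;=\; L_S \cdot \bigotimes_{i \in S^c}(E_i + L_i) \;=\; L_S,
\]
which gives the first formula after applying both sides to $f$. The dual formula for $E_S$ is analogous: for $T \subseteq S^c$ one has $T^c \supseteq S$, so $E_{T^c} = E_S E_{S^c \setminus T}$, and summing over $T \subseteq S^c$ collapses the $S^c$-part to the identity, leaving $E_S$.

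The statement is essentially an algebraic identity about a commuting family of orthogonal projections, so there is no substantive obstacle. The only point requiring care is the bookkeeping: when translating between the operators $E_{T^c} L_T$ indexed by $T$ and the tensor-product forms indexed by a choice of $E_i$ or $L_i$ on each coordinate, the index sets must be matched correctly so that the telescoping via $\bigotimes_i (E_i+L_i) = I$ applies cleanly.
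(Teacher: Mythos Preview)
Your proof is correct and is exactly the standard argument one would expect; the paper itself omits the proof entirely, stating only that ``the simple proof is omitted.'' Your tensorization approach via the one-dimensional identities $E^2=E$, $L=I-E$, $EL=LE=0$ and the expansion $\bigotimes_i(E_i+L_i)=I$ is the natural way to fill in this omission and requires no correction.
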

For more information on the Efron-Stein decomposition, we refer the reader to~\cite[Chapter~8]{o2014analysis}.

\paragraph{Restrictions and derivatives.}
Like in the hypercube case, for $f \in V$, $S \subset [n]$, and $x \in \Omega^S$, the restriction $f_{S \to x} \colon \Omega^{[n] \setminus S} \to \mathbb{R}$ is defined as $f_{S \to x}(y)=f(x,y)$. As was suggested in~\cite{keevash2021forbidden}, the discrete derivatives can be generalized using restrictions of the Laplacians. For all $S \subseteq [n]$, $x \in \Omega^n$, define
    \[D_{S, x} f = [L_S f]_{S \rightarrow x_S}.\]
We use the same notation when $x\in \Omega^T$ for an arbitrary set $T$ containing $S$, and write $D_{i,x}$ instead of $D_{\{i\},x}$ for brevity. 

\paragraph{The noise operator.} The noise operator, defined above in the `hypercube' setting, can be naturally defined over a general product space $(\Omega_1 \times \cdots \times \Omega_n,\mu_1 \times \cdots \times \mu_n)$ as $T_{\rho} f(x) = \mathbb{E}_{y \sim N_{\rho}(x)} f(y)$, where $N_{\rho}(x)$ is obtained from $x$ by leaving each coordinate $x_i$ of $x$ unchanged with probability $\rho$ and replacing it by a random $y_i \sim \mu_i$ with probability $1-\rho$.
It can be easily verified that in the space $(\Omega^n,\mu)$ (i.e.,~where $(\Omega_i,\mu_i)=(\Omega,\mu)$ for all $i$), like in the hypercube case, the eigenspaces of $T_\rho$ are exactly the spaces $V^{=T}$, and that for any $f \in V$, we have
\begin{equation}\label{eq:noise_efron_stein}
T_\rho f = \sum_{S \subseteq [n]} \rho^{|S|} f^{=S}. 
\end{equation}
In particular, it is clear that $T_{\rho}$ is multiplicative: we have $T_{\rho_1} \circ T_{\rho_2} = T_{\rho_1 \cdot \rho_2}$.

The following relations between the Laplacian, the derivatives, and the noise operator, can be easily verified.
\begin{lem} \label{lem:swap laplace noise derivative}
    Let $f \in V$. Then for any $\rho>0$, any $S \subset [n]$, and any $x \in \Omega^S$, we have
    \[
    L_S T_{\rho} f = T_{\rho} L_S f, \qquad \mbox{and} \qquad D_{S,x} T_\rho f = \rho^{|S|} T_{\rho} D_{S,x} f.
    \]
\end{lem}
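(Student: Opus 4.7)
The plan is to deduce both identities directly from the Efron-Stein decomposition, using equation~\eqref{eq:noise_efron_stein} and Lemma~\ref{lem:laplacian_efron_stein}(2). The operator $T_\rho$ is diagonal in this decomposition, multiplying each component $V^{=T}$ by $\rho^{|T|}$, while by Lemma~\ref{lem:laplacian_efron_stein}(2) the operator $L_S$ equals the orthogonal projection onto $\bigoplus_{T\supseteq S} V^{=T}$. Since both operators are scalar multiples of the identity on each Efron-Stein component, they commute. Concretely, expanding $f = \sum_T f^{=T}$ and applying~\eqref{eq:noise_efron_stein} gives $T_\rho f = \sum_T \rho^{|T|} f^{=T}$; applying Lemma~\ref{lem:laplacian_efron_stein}(2) termwise then yields $L_S T_\rho f = \sum_{T \supseteq S} \rho^{|T|} f^{=T}$, and the same expression arises by first computing $L_S f = \sum_{T \supseteq S} f^{=T}$ and then applying~\eqref{eq:noise_efron_stein}. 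This establishes the first identity.

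For the second identity, by definition $D_{S,x} T_\rho f = [L_S T_\rho f]_{S \to x_S}$, and by the first identity this equals $[T_\rho L_S f]_{S \to x_S}$. The key observation is that for every $T \supseteq S$ and every $g \in V^{=T}$, the restriction $g_{S \to x_S}$, viewed as a function on $\Omega^{S^c}$, lies in $V^{=(T\setminus S)}$ for the reduced product space; indeed, $V^{=T}$ is the image of the tensor operator $\bigotimes_{i \in T} L_i \otimes \bigotimes_{i \notin T} E_i$, and fixing the $S$-coordinates evaluates the $L_i$-factors for $i \in S$ to scalars, leaving exactly the $V^{=(T\setminus S)}$ structure on $\Omega^{S^c}$. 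Combined with the factorization $T_\rho = T_\rho^S \otimes T_\rho^{S^c}$ — so that the noise operator acts as $\rho^{|T|}$ on $V^{=T}$ over $\Omega^n$, while on $\Omega^{S^c}$ it acts as $\rho^{|T\setminus S|}$ on $V^{=(T\setminus S)}$ — summing over $T \supseteq S$ gives
\[
D_{S,x} T_\rho f \;=\; \sum_{T \supseteq S} \rho^{|T|}\, [f^{=T}]_{S \to x_S} \;=\; \rho^{|S|}\, T_\rho \!\left( \sum_{T \supseteq S} [f^{=T}]_{S \to x_S} \right) \;=\; \rho^{|S|}\, T_\rho D_{S,x} f.
\]

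The only real obstacle is the bookkeeping lemma $[V^{=T}]_{S \to x_S} \subseteq V^{=(T\setminus S)}$, which is needed to track the eigenvalue of $T_\rho$ correctly after restriction; this is a short direct verification from the tensor description of $V^{=T}$, and is essentially the only non-trivial ingredient beyond the commutation provided by the spectral decomposition.
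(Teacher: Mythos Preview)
Your proof is correct. The paper does not actually prove this lemma---it merely states that the relations ``can be easily verified''---so there is no detailed argument to compare against; your Efron--Stein approach, reading both operators as diagonal on the decomposition and tracking how restriction sends $V^{=T}$ to $V^{=T\setminus S}$, is precisely the natural verification the paper is gesturing at via Lemma~\ref{lem:laplacian_efron_stein} and~\eqref{eq:noise_efron_stein}.
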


\section{Bounding Norms Using Gaussian Encodings}
\label{sec:Gaussian}
In this section we show that the $q$-norm of the noise operator, namely, $\lVert T_\rho f\rVert_q$, can be bounded using the norms of certain `Gaussian encodings' of $f$. This will allow us to reduce the hypercontractive inequalities we want to prove to the Gaussian setting in which similar hypercontractive inequalities are known.  

\subsection{Gaussian encodings}

Let $(\Omega, \mu)$ be a probability space with $k$ elements, and let $B=\{ 1, f_1, \ldots, f_{k-1} \}$ be an orthonormal basis for $L^2(\Omega,\mu)$.
Let $\gamma$ be the Gaussian probability measure on $\mathbb{R}^{k-1}$, where the coordinates are independent normal (i.e.,  $N(\mathbf{0}, 1)$) Gaussian random variables. We define an operator $\enc \colon L^2(\Omega, \mu) \to L^2(\mathbb{R}^{k-1}, \gamma)$ by setting
\[
\enc \Big(\alpha_0+\sum_{i=1}^{k-1} \alpha_i f_i \Big) = \alpha_0+\sum_{i=1}^{k-1} \alpha_i z_i
\]
for all $\alpha_0,\ldots,\alpha_{k-1} \in \mathbb{R}$, where $z_i$ is the projection on the $i$'th coordinate. By tensorization, for any $S \subset [n]$ we obtain an operator
\[\enc_S\colon L^2(\Omega^n, \mu) \to L^2(\Omega^{S^c} \times \left( \mathbb{R}^{k-1}\right)^S, \mu^{ S^c} \otimes \gamma^{ S} ),\]
defined by
\[ \enc_S = \bigotimes_{i \in S} \enc \otimes \bigotimes_{i \in S^c} \text{id}.\]
For the sake of convenience, we shorten $G_{\{i\}}$ to $G_i$.

\subsection{Bounding norms using the encodings}

We aim at showing that $\lVert T_{\rho}f\rVert_q$ can be bounded using norms of the encodings $\enc_S(f)$. Specifically, we prove the following:
\begin{proposition}
    \label{lem:tenzorization}
    Let $\rho\le \frac{1}{3}$, let $q \ge 2$, and let $f \in L^2(\Omega^n, \mu)$. Then
        \[ \lVert  T_\rho f \rVert_q^q \leq \sum_{S \subseteq [n]} \beta^{q |S|} \lVert  (L_S \circ \enc_{S^c}) f \rVert_q^q,\]
        where $\beta=\rho \left(1 + \frac{2 (q-2)}{\log(1/\rho)}\right)$.
\end{proposition}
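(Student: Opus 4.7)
The plan is to rewrite $T_{\rho}$ as a partial average of the Gaussian encoding, apply Jensen's inequality, and then iteratively Gaussianize one coordinate at a time, paying a $\beta^q$ correction per coordinate that is kept discrete.

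\textbf{Step 1 (Gaussian representation and Jensen).} For any $g\colon\Omega\to\mathbb{R}$, the map $\enc(g)$ is affine in its argument by definition. A direct calculation then gives
\[
T_{\rho}g(x)\;=\;\mathbb{E}_{Z}\!\left[\enc(g)\!\left(\rho\,\mathbf{b}(x)+\sqrt{1-\rho^{2}}\,Z\right)\right],
\]
where $\mathbf{b}(x)=(f_{1}(x),\ldots,f_{k-1}(x))$ and $Z\sim\gamma$: averaging over $Z$ kills the term linear in $Z$ and leaves $\mathbb{E}[g]+\rho L g=T_{\rho}g$. Tensorization extends this to $f\in L^{2}(\Omega^{n},\mu^{n})$ with $\mathbf{B}(x)=(\mathbf{b}(x_{1}),\ldots,\mathbf{b}(x_{n}))$ in place of $\mathbf{b}$ and $\enc_{[n]}(f)$ in place of $\enc(g)$. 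Applying Jensen's inequality to $|\cdot|^{q}$ then yields
\[
\|T_{\rho}f\|_{q}^{q}\;\le\;\mathbb{E}_{X,Z}\!\left[\bigl|\enc_{[n]}(f)\bigl(\rho\mathbf{B}(X)+\sqrt{1-\rho^{2}}\,Z\bigr)\bigr|^{q}\right].
\]

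\textbf{Step 2 (One-coordinate replacement).} The heart of the argument is a single-coordinate comparison: for any function $h$ that is affine in its $i$-th $(k-1)$-block,
\[
\mathbb{E}_{X_{i},Z_{i}}\!\left[\bigl|h(\rho\mathbf{b}(X_{i})+\sqrt{1-\rho^{2}}\,Z_{i})\bigr|^{q}\right]\;\le\;\mathbb{E}_{Z_{i}'}\!\left[|h(Z_{i}')|^{q}\right]+\beta^{q}\,\mathbb{E}_{X_{i}}\!\left[|L_{i}(h\circ\mathbf{b})|^{q}\right],
\]
where $Z_{i}'\sim\gamma$ is fresh and $L_{i}$ is the $\Omega$-Laplacian in coordinate $i$. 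Both mixing vectors $\rho\mathbf{b}(X_{i})+\sqrt{1-\rho^{2}}Z_{i}$ and $Z_{i}'$ are centered with identity covariance, so the order-$2$ terms of a Taylor expansion of $|\cdot|^{q}$ agree; the surplus in the higher-order discrete moments is absorbed by $\beta^{q}$ times the discrete residual $L_{i}(h\circ\mathbf{b})$. The specific form $\beta=\rho(1+2(q-2)/\log(1/\rho))$ arises from balancing this Taylor tail against the $L^{q}$-norm of that residual using $\rho\le 1/3$; the factor $(q-2)/\log(1/\rho)$ is what is needed to cover the order-$\ge 3$ moment mismatch.

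\textbf{Step 3 (Iteration).} Starting from the bound in Step 1, I apply Step 2 once per coordinate. At each coordinate $i$ the bound splits into a ``Gaussianize'' branch ($\mathbf{b}(X_{i})$ is fully replaced by a pure Gaussian, placing $i\in S^{c}$, no $\beta$-factor) and a ``correct'' branch ($\mathbf{b}(X_{i})$ is kept with an extracted Laplacian, placing $i\in S$ and paying $\beta^{q}$). Since the Gaussian encoding on disjoint coordinates commutes with the $\Omega$-Laplacian on the remaining ones, the $2^{n}$ leaves of the resulting binary tree are indexed by subsets $S\subseteq[n]$. Using the identity $\enc_{[n]}(f)(\mathbf{b}(x_{S}),u_{S^{c}})=\enc_{S^{c}}(f)(x_{S},u_{S^{c}})$, the leaf corresponding to $S$ equals $\beta^{q|S|}\|L_{S}\enc_{S^{c}}f\|_{q}^{q}$ (with the norm taken against $\mu^{S}\otimes\gamma^{S^{c}}$), and summing over $S$ gives the claim.

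\textbf{Main obstacle.} The delicate point is Step 2. A naive choice $\beta=\rho$ does not suffice when $q>2$: although the second moments of $\rho\mathbf{b}(X_{i})+\sqrt{1-\rho^{2}}Z_{i}$ and $Z_{i}'$ match, the third and higher moments on the discrete side can overshoot the Gaussian ones by amounts not controlled by the common variance. Obtaining the sharp $\beta=\rho(1+2(q-2)/\log(1/\rho))$ requires carefully isolating these higher-order contributions and bounding them by the full $L^{q}$-norm of the discrete residual with the smallest possible prefactor; this is ultimately what drives the $\log q$-type dependence in the downstream hypercontractivity estimates.
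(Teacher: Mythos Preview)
Your Step~2 inequality is false as stated, and this is where the argument breaks. The Jensen step in Step~1 is exactly what causes it: by inserting the $\sqrt{1-\rho^{2}}\,Z_i$ you force the mixed vector $\rho\,\mathbf{b}(X_i)+\sqrt{1-\rho^{2}}\,Z_i$ to have the \emph{same} covariance as the replacement Gaussian $Z_i'$, so the two sides of Step~2 agree to second order and there is nothing left to absorb a third-moment mismatch.

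Concretely, take $k=2$, so $h(u)=1+cu$ is affine in one real variable, write $X=f_1(X_i)$ (hence $\E[X]=0$, $\E[X^2]=1$), and set $q=4$, $\rho=1/3$. Your Step~2 reads
\[
\E\bigl[(1+c\rho X+c\sqrt{1-\rho^{2}}\,Z)^{4}\bigr]\;\le\;\E\bigl[(1+cZ')^{4}\bigr]+\beta^{4}c^{4}\E[X^{4}].
\]
Expanding both fourth powers, the $c^{2}$ terms are each $6c^{2}$ and cancel; the left side carries a $c^{3}$ term $4\rho^{3}\E[X^{3}]\,c^{3}$ while the right has none; and the correction $\beta^{4}c^{4}\E[X^{4}]$ is only $O(c^{4})$. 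For the two-point space $(\Omega,\mu)$ with weights $(1/5,4/5)$ one has $\E[X^{3}]=3/2>0$, so the inequality fails for all sufficiently small $c$. No choice of $\beta$ depending only on $(\rho,q)$ can repair this, since $c^{3}\gg c^{4}$ as $c\to 0$.

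The paper never applies Jensen. Its one-coordinate lemma compares $\|T_\rho g\|_q^q=\E[|1+\rho c X|^q]$ directly to $\|\enc g\|_q^q=\E[|1+cZ|^q]$: the Gaussian on the right is \emph{not} rescaled by $\rho$, so it has strictly larger second moment. The resulting order-$c^{2}$ surplus $\binom{q}{2}(1-\rho^{2})c^{2}$ is what swallows the skew and all higher-order terms; it is extracted via a threshold split on $\rho|cX|$ against $\omega=\tfrac{\log(1/\rho)}{2(q-2)}$, which is where the specific $\beta$ arises. The iteration over coordinates then writes $T_\rho=T_\rho^{(n)}\circ\cdots\circ T_\rho^{(1)}$ and uses that $T_\rho^{(j)}$ commutes with $\enc_i$ and $L_i$ for $j\ne i$, so no mixed discrete--Gaussian variables ever appear.
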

Note that here and throughout the paper, `$\log$' means the natural logarithm. 

\medskip We prove the proposition in three stages. First, we prove it in the case $(k=2,n=1)$, where each function $f$ has the form $f=\alpha_0+\alpha_1 f_1$ and we have $T_\rho f=\alpha_0 + \rho \alpha_1 f_1$. Then, we prove it for general linear functions, i.e., for $f=\alpha_0+\sum_{i=1}^{k-1} \alpha_i f_i$, where $T_{\rho} f = \alpha_0 + \rho \sum_{i=1}^{k-1} \alpha_i f_i$. Finally, we deduce  Proposition~\ref{lem:tenzorization} from the linear case using tensorization. 

\subsubsection{Linear functions for $k=2$}

In this case, we may write $f=\alpha_0+\alpha_1 f_1$ and the assertion of Proposition~\ref{lem:tenzorization} reads 
\begin{equation}\label{Eq:k=2,n=1}
    \lVert  T_\rho f \rVert_q^q \leq \lVert \enc(f)\rVert_q^q + \beta^q \lVert \alpha_1 f_1\rVert_q^q, 
\end{equation}
since $L_{\{1\}}(\alpha_0+\alpha_1 f_1)=\alpha_1 f_1$. We prove the following slightly more general lemma:
\begin{lem}
    \label{lem:n=1_one_var_general}
    Let $0 < \rho \leq 1/3$ and $q \geq 2$ be real numbers.
    Let $X$ be a real random variable with $\E[X] = 0$ and $\E[X^2] = 1$, and let $Z$ be a normal Gaussian random variable.
    Then for any $d \in \mathbb{R}$ we have
    \[ \lVert 1 + \rho d X \rVert_{q}^{q} \le \lVert 1 + d Z\rVert_{q}^{q} + \beta^{q} \lVert d X\rVert_q^q,\]
where $\beta=\rho \left(1 + \frac{2 (q-2)}{\log(1/\rho)}\right)$.
\end{lem}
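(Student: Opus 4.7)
The plan is to prove the lemma by splitting $\E[|1+\rho dX|^q]$ according to the magnitude of $|\rho dX|$ relative to a threshold $\tau$, estimating the bulk $\{|\rho dX|\le\tau\}$ by a Taylor expansion that matches Gaussian behavior in its first two moments, and absorbing the tail $\{|\rho dX|>\tau\}$ into the $\beta^{q}\|dX\|_q^q$ budget. Writing $\gamma:=\beta/\rho=1+\frac{2(q-2)}{\log(1/\rho)}$, the natural choice is $\tau:=(\gamma-1)^{-1}=\frac{\log(1/\rho)}{2(q-2)}$, tailored so that $(1+\tau^{-1})\rho=\beta$ exactly.

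For the tail, the elementary chain $|1+\rho y|\le 1+|\rho y|\le(1+\tau^{-1})|\rho y|=\beta|y|$ whenever $|\rho y|\ge\tau$ yields at once
\[
\E\!\left[|1+\rho dX|^q\,\mathbf{1}_{|\rho dX|>\tau}\right]\le \beta^q\,\E[|dX|^q],
\]
which consumes the entire $\beta^{q}\|dX\|_q^q$ budget. For the bulk, $\tau<1$ forces $1+\rho dX>0$, so $|1+\rho dX|^q=(1+\rho dX)^q$, and I would Taylor-expand to second order with integral remainder:
\[
(1+\rho dX)^q=1+q\rho dX+\binom{q}{2}\rho^2 d^2 X^2+R(\rho dX),\qquad |R(u)|\le\binom{q}{3}|u|^3(1+|u|)^{q-3}.
\]
Taking expectations and using $\E X=0$, $\E X^2=1$, the zeroth- and second-order contributions sum to exactly $1+\binom{q}{2}\rho^2 d^2$. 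Comparing with $\E|1+dZ|^q\ge 1+\binom{q}{2}d^2$ (the higher even Gaussian moments all contribute nonnegatively) yields a positive slack of $\binom{q}{2}(1-\rho^2)d^2$ on the right-hand side against which the bulk Taylor remainder must fit.

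The bound on the remainder uses the key consequence of the choice of $\tau$, namely $(1+\tau)^{q-3}\le e^{(q-3)\tau}\le \rho^{-1/2}$, so that $|R(\rho dX)|\,\mathbf{1}_{|\rho dX|\le\tau}\le\binom{q}{3}\rho^{5/2}|dX|^3$. Applying the Young-type interpolation $|dX|^3\le\tfrac{q-3}{q-2}d^2+\tfrac{1}{q-2}\E|dX|^q$, the bulk remainder splits into a piece proportional to $d^2$, absorbed by the Gaussian slack, and a piece proportional to $\E|dX|^q$, folded back into the $\beta^q$ budget.

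The main obstacle I foresee is achieving uniform control across the full admissible range of $(\rho,q)$. The second-order Taylor bound strains the Gaussian slack for large $q$, and the formula $\tau=\log(1/\rho)/(2(q-2))$ can exceed $1$ when $\rho$ is very small and $q$ is close to $2$, invalidating the pointwise positivity needed for the bulk analysis. Both issues can likely be handled by iterating the Taylor expansion to higher order (where the remainder eventually collapses onto the monomial $(\rho dX)^q$, bounded directly by $\beta^q\E|dX|^q$ via $\rho\le\beta$) combined with a regime-dependent choice of threshold. The precise shape $\beta=\rho+\tfrac{2\rho(q-2)}{\log(1/\rho)}$ is pinned down exactly by the twin requirements that $\tau$ collapses the tail bound onto $\beta^q\E|dX|^q$ while keeping the cumulative Taylor multiplicative loss $(1+\tau)^q$ below a bounded power of $1/\rho$.
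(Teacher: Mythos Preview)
Your decomposition and threshold $\tau=\omega=\frac{\log(1/\rho)}{2(q-2)}$ match the paper exactly, and your tail bound is identical. The divergence is in the bulk: you expand $(1+\rho dX)^q$ to \emph{second} order with a cubic Lagrange remainder, whereas the paper stops at \emph{first} order, writing $|1+y|^q=1+qy+\binom{q}{2}|1+y'|^{q-2}y^2$ for some $y'$ between $0$ and $y$. This single change makes all the complications you list disappear. Bounding $|1+y'|^{q-2}\le(1+\omega)^{q-2}\le e^{\omega(q-2)}=\rho^{-1/2}$ gives the pointwise inequality
\[
|1+\rho dX|^q\,\mathbf{1}_{|\rho dX|<\omega}\le 1+q\rho dX+\tfrac{1}{2}\rho^{-1/2}(q\rho dX)^2,
\]
so after expectation the bulk is at most $1+\tfrac{1}{2}\rho^{3/2}q^2d^2\le 1+\tfrac{1}{9}q^2d^2$. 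There is no cubic term, hence no Young interpolation (which in your form requires $q\ge 3$ anyway), no attempt to fold a piece of the bulk back into the $\beta^q$ budget that the tail has already fully spent, no $C^3$ regularity issue for $2<q<3$, and no problem when $\omega>1$ (the pointwise inequality above is checked separately to hold trivially on $\{|\rho dX|\ge\omega\}$ because $1+x+Ax^2>0$ for $A>1/4$).

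On the Gaussian side, your claimed bound $\E|1+dZ|^q\ge 1+\binom{q}{2}d^2$ rests on a binomial expansion argument that is only valid for integer $q$. The paper sidesteps this by dropping to the floor: $\|1+dZ\|_q^q\ge\|1+dZ\|_{\lfloor q\rfloor}^{\lfloor q\rfloor}\ge 1+\binom{\lfloor q\rfloor}{2}d^2\ge 1+\tfrac{1}{9}q^2d^2$, which matches the bulk upper bound and closes the proof. The obstacles you flag at the end (large $\tau$, strained slack for large $q$, higher-order Taylor iteration, regime-dependent thresholds) all evaporate once you use the first-order expansion with quadratic remainder.
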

Lemma~\ref{lem:n=1_one_var_general} clearly implies~\eqref{Eq:k=2,n=1}. Indeed, for $\alpha_0=0$ (i.e., $f=\alpha_1 f_1$), we have $T_{\rho}f=\rho f$ and hence~\eqref{Eq:k=2,n=1} follows from the inequality $\lVert \rho f \rVert_q^q \leq \beta^q \lVert f\rVert_q^q$ which holds trivially. For $\alpha_0\neq 0$, by dividing both sides of~\eqref{Eq:k=2,n=1} by $|\alpha_0|^q$ we obtain a special case of Lemma~\ref{lem:n=1_one_var_general}.
    
\begin{proof}
    We may assume $q > 2$, since it is easy to check that for $q=2$ the claim holds. 
    Set $\omega = \frac{\log (1 / \rho)}{2(q-2)}$, so that $\beta=\rho(1+\frac{1}{\omega})$.
    We decompose 
    \[ 1 + \rho d X = \left( 1 + \rho d X \right) \cdot 1_{\rho |d X| \geq \omega} + \left( 1 + \rho d X \right) \cdot 1_{\rho |d X| < \omega}, \]
    to obtain 
    \[ \lVert  1 + \rho d X \rVert_q^q = \lVert  \left( 1 + \rho d X \right) 1_{\rho |d X| \geq \omega} \rVert_q^q + \lVert  \left( 1 + \rho d X \right) 1_{\rho |d X| < \omega} \rVert_q^q. \]
    
    \noindent
    When $\rho |d X| \geq \omega$, we have $\omega + \omega \rho |d X| \leq (1 + \omega) \rho |d X|$ and therefore 
    \[ \left| 1 + \rho d X \right| \leq \frac{1 + \omega}{\omega} \rho |d X|.\]
    This allows us to upper bound 
    \[
    \lVert 1 + \rho d X\rVert_{q}^{q} \le \left( \rho \left( 1 + \frac{1}{\omega} \right) \right)^q \lVert  d X\rVert_{q}^{q} + \lVert \left(1 + \rho d X\right) 1_{\rho |d X| <\omega}\rVert_{q}^{q} = \beta^q \lVert  d X\rVert_{q}^{q} + \lVert \left(1 + \rho d X\right) 1_{\rho |d X| <\omega}\rVert_{q}^{q}.
    \]
    Hence, it remains to upper bound the second summand by $\lVert 1+dZ\rVert_q^q$. To achieve this, we first prove the following claim:
    \begin{claim}\label{claim:bound sec_summ general}
        In the above notations, for any $x \in \mathbb{R}$ we have
        \begin{equation}\label{eq:bound_sec_summ genera1}
            \left| 1 + \rho d x \right|^{q} \cdot 1_{\rho |d x| < \omega} \leq 1 + q \rho d x + \frac{\rho^{-1/2}}{2}(q \rho d x)^2.
        \end{equation}
    \end{claim}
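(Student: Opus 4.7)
The plan is to substitute $y = \rho d x$ and reduce the claim to proving
\[
|1+y|^q \;\leq\; 1 + qy + \frac{\rho^{-1/2} q^2}{2}\, y^2 \qquad \text{for all } |y| < \omega.
\]
Setting $\phi(y) = |1+y|^q$ and $\psi(y) = 1 + qy + \tfrac{1}{2}\rho^{-1/2}q^2 y^2$, the two functions agree to first order at the origin: $\phi(0) = \psi(0) = 1$ and $\phi'(0) = \psi'(0) = q$. So if we let $h := \psi - \phi$, it suffices to prove that $h$ is convex on $(-\omega, \omega)$, since convexity together with $h(0) = h'(0) = 0$ forces $h \geq 0$ throughout the interval, which is exactly the desired bound.

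To establish the convexity I would compute $\psi''(y) = \rho^{-1/2} q^2$ and, uniformly for $y \neq -1$, $\phi''(y) = q(q-1)\,|1+y|^{q-2}$. Since the case $q = 2$ has already been disposed of in the proof of Lemma~\ref{lem:n=1_one_var_general}, we may assume $q > 2$, in which case $\phi''$ extends continuously across $y = -1$ with value $0$, so $\phi$ is $C^2$ on all of $\mathbb{R}$. On the range $|y| \leq \omega$ I would then apply the elementary estimate
\[
|1+y| \;\leq\; 1 + |y| \;\leq\; 1 + \omega \;\leq\; e^{\omega},
\]
the last step being $1+x \leq e^x$. Raising this to the $(q-2)$ power gives $|1+y|^{q-2} \leq e^{(q-2)\omega} = \rho^{-1/2}$ by the very choice $\omega = \log(1/\rho)/(2(q-2))$. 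Consequently $\phi''(y) \leq q(q-1)\rho^{-1/2} \leq q^2\rho^{-1/2} = \psi''(y)$, so $h'' \geq 0$ on $(-\omega, \omega)$, as required.

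Essentially the only obstacle is the calibration of $\omega$: the cutoff has been chosen so that $e^{(q-2)\omega}$ lands exactly on the target $\rho^{-1/2}$, making the convexity argument just barely work. The one remaining subtlety is the regularity of $|1+y|^q$ at the corner $y = -1$, but this is handled automatically by the hypothesis $q > 2$ inherited from the ambient proof; the two-sided formula $\phi''(y) = q(q-1)|1+y|^{q-2}$ then applies uniformly and does not require splitting into cases $y > -1$ versus $y < -1$.
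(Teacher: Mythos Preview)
Your convexity argument for the region $|\rho dx| < \omega$ is correct and is essentially the same computation as the paper's: the paper uses Taylor's theorem with Lagrange remainder to write $|1+y|^q = 1 + qy + \binom{q}{2}|1+y'|^{q-2}y^2$ and then bounds $|1+y'|^{q-2} \le (1+\omega)^{q-2} \le e^{\omega(q-2)} = \rho^{-1/2}$, which is exactly your second-derivative estimate. Your packaging via $h'' \ge 0$ with $h(0)=h'(0)=0$ is a clean reformulation of the same idea.

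However, your reduction is incomplete. The claim is asserted for \emph{all} $x \in \mathbb{R}$, not only those with $\rho|dx| < \omega$. When $\rho|dx| \ge \omega$ the indicator annihilates the left-hand side, so you must still verify that the right-hand side $\psi(y) = 1 + qy + \tfrac12\rho^{-1/2}(qy)^2$ is nonnegative there; your convexity argument only gives $\psi \ge \phi$ on $(-\omega,\omega)$ and says nothing outside. The paper closes this case by noting that $\psi$, viewed as a quadratic in $qy$, has coefficient $A = \rho^{-1/2}/2 > 1/4$ in front of the square term (since $\rho < 1$), and $1 + X + AX^2 > 0$ for all real $X$ whenever $A > 1/4$. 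This is a one-line addition, but without it the proof does not cover the full range of $x$.
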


    \begin{proof}
        For $w \in \mathbb{R}$ and a positive integer $i$ we write $\binom{w}{i}$ for $\frac{w (w-1) \cdots (w-i+1)}{i !}$. By taking the Taylor approximation of degree $1$ of the function $|1 + y|^q$ at $y = 0$, we may write
        \[
        |1 + y|^q = 1 + qy + \binom{q}{2} |1 + y'|^{q-2} y^2,
        \]
        for some $y'$ in the interval between 0 and $y$. We may upper bound
        \[
        |1 + y'|^{q-2} \leq (1 + |y|)^{q-2},
        \]
        and when substituting $y = \rho d x$ we get
        \[
        (1 + \rho |d x|)^{q-2} \leq (1 + \omega)^{q-2} \leq e^{\omega (q - 2)} = \rho^{-1/2}.
        \]
        We therefore obtain that in the domain $\rho |d x| < \omega$,
        \[
        |1 + \rho d x|^q \leq 1 + q \rho d x + \frac{\rho^{-1/2}}{2}(q \rho d x)^2 .
        \]
        
        We are left with the case $\rho |d x| \geq \omega$. In this case, the left hand side of~\eqref{eq:bound_sec_summ genera1} is equal to zero, while the right hand side 
        $1 + (q \rho d x) + \frac{\rho^{-1/2}}{2} ( q \rho d x )^{2}$ is positive since for $A>1/4$, we have $1 + x + A x^2 > 0$ for all $x$. 
    \end{proof}
    
    We now return to the proof of the lemma. Since $\E[X] = 0$ and $\E[X^2] = 1$, Eq.~\eqref{eq:bound_sec_summ genera1} implies 
    \[
    \lVert  \left(1 + \rho d X\right)1_{\rho |d X| < \omega} \rVert_{q}^{q} \le 1 + \frac{\rho^{1.5}}{2} q^2 d^{2} \leq 1 + \frac{1}{9} q^2 d^2.
    \]
    On the other hand, as $Z$ is symmetric, all of its odd moments vanish. Combining with the facts that $\E[1+dZ]=1$, and $q > 2$, we may use the Binomial theorem to obtain 
    \[
        \lVert 1+dZ\rVert_{q}^{q} \ge \lVert 1+dZ\rVert_{\lfloor q \rfloor}^{\lfloor q \rfloor} \geq \E[(1+dZ)^{\lfloor q \rfloor}] \geq 1 + \binom{\lfloor q \rfloor}{2} d^2 \ge 1 + \frac{1}{9} q^2 d^2,
    \]
    which completes the proof.
\end{proof}

\subsubsection{Linear functions for a general $k$}

In this case, we may write $f=\alpha_0+\sum_{i=1}^{k-1} \alpha_i f_i$ and hence, the assertion of Proposition~\ref{lem:tenzorization} reduces to the following lemma. 
    \begin{lem}
    \label{lem:n=1_many_vars}
    Let $\rho \leq 1/3$, $q \ge 2$, and let $f \in L^2(\Omega, \mu)$ be a linear function. Then
   \[ \left\lVert  \encnoise f \right\rVert_{q}^{q} \le \left\lVert  \enc f \right\rVert_{q}^{q} + \beta^{q} \left\lVert  L f \right\rVert_q^q,\]
   where $\beta=\rho \left(1 + \frac{2 (q-2)}{\log(1/\rho)}\right)$.
    \end{lem}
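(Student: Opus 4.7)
The plan is to reduce Lemma~\ref{lem:n=1_many_vars} to the one-variable case Lemma~\ref{lem:n=1_one_var_general} by collapsing the linear combination $\sum_{i=1}^{k-1}\alpha_i f_i$ into a single random variable with mean $0$ and variance $1$, while simultaneously collapsing $\sum_{i=1}^{k-1}\alpha_i z_i$ into a scaled standard Gaussian. The observation that makes this work is that $\{f_1,\dots,f_{k-1}\}$ is orthonormal and orthogonal to $1$, so the random variable $W=\sum_{i=1}^{k-1}\alpha_i f_i$ satisfies $\E[W]=0$ and $\E[W^2]=\sum_i \alpha_i^2$. Meanwhile, $\sum_i \alpha_i z_i$ is Gaussian with the same mean and variance, so it has the distribution of $\sqrt{\sum_i \alpha_i^2}\cdot Z$ for a standard Gaussian $Z$.

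First I would write $f=\alpha_0+\sum_{i=1}^{k-1}\alpha_i f_i$ and dispose of the degenerate case $\alpha_0=0$: then $T_\rho f=\rho L[f]$ and $\|T_\rho f\|_q^q=\rho^q\|L[f]\|_q^q\le \beta^q \|L[f]\|_q^q$, since the definition of $\beta$ gives $\beta\ge \rho$. In this case the bound holds even without the Gaussian term, which is non-negative.

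For the main case $\alpha_0\neq 0$, set $d=\sqrt{\sum_{i=1}^{k-1}\alpha_i^2}/\alpha_0\in\mathbb{R}$ and let $X=\bigl(\sum_{i=1}^{k-1}\alpha_i f_i\bigr)/\sqrt{\sum_i\alpha_i^2}$, so that $\E[X]=0$, $\E[X^2]=1$, and $\alpha_0\cdot dX=\sum_i \alpha_i f_i$. Apply Lemma~\ref{lem:n=1_one_var_general} with this $X$, with $Z$ a standard Gaussian, and with this value of $d$ to obtain
\[
\|1+\rho d X\|_q^q\le \|1+dZ\|_q^q+\beta^q\|dX\|_q^q.
\]
Multiplying through by $|\alpha_0|^q$ and using the identifications
\[
\alpha_0(1+\rho dX)=\alpha_0+\rho\sum_i\alpha_i f_i=T_\rho f,\qquad \alpha_0\cdot dX=L[f],
\]
together with the distributional equality $\alpha_0\cdot dZ\stackrel{\mathrm{dist}}{=}\sum_i\alpha_i z_i$ (so that $|\alpha_0|^q\|1+dZ\|_q^q=\|\enc[f]\|_q^q$), yields the claimed inequality.

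I expect no serious obstacle here: the content is an elementary reduction, and Lemma~\ref{lem:n=1_one_var_general} already contains all the analytic work. The only potentially delicate point is bookkeeping around signs (the scalar $d$ may be negative, but the one-variable lemma is stated for arbitrary $d\in\mathbb{R}$, and $Z$ is symmetric, so this causes no trouble) and ensuring that $X$ really has mean $0$ and variance $1$, which is immediate from the fact that $\{1,f_1,\dots,f_{k-1}\}$ is an orthonormal basis.
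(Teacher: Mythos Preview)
Your proposal is correct and follows essentially the same approach as the paper: both reduce to Lemma~\ref{lem:n=1_one_var_general} by normalizing the linear part $\sum_i \alpha_i f_i$ to a mean-zero, unit-variance random variable and using that $\sum_i \alpha_i z_i$ has the same distribution as a scaled standard Gaussian. The only cosmetic difference is that the paper first normalizes to $\alpha_0=1$ and then sets $d=\sigma=\sqrt{\sum_i\alpha_i^2}$, whereas you keep $\alpha_0$ general and absorb its sign into $d$; as you note, this is harmless since Lemma~\ref{lem:n=1_one_var_general} allows $d\in\mathbb{R}$.
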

    
    \begin{proof}
    Like in the case $k=2$, we may assume without loss of generality that $\alpha_0=1$, and hence, $f = 1 + \sum_{i=1}^{k-1} \alpha_i f_i$. In this case, $\lVert G(f)\rVert_q^q=\lVert 1+\sum_{i=1}^{k-1} \alpha_i Z_i\rVert_q^q,$ where $Z_i$ are independent normal Gaussian random variables. 
    Denote $X = L(f) = \sum_{i=1}^{k-1} \alpha_i f_i$, $\sigma = \sqrt{\E[X^2]}$, and note that $\encnoise (f) = 1 + \rho X = 1 + \rho \sigma \frac{X}{\sigma}$.
    We have $\E \left[ \frac{X}{\sigma} \right] = 0$ and $\E \left[ \left( \frac{X}{\sigma} \right)^2 \right] = 1$.
    By Lemma~\ref{lem:n=1_one_var_general} (applied with $d = \sigma$ and $\frac{X}{\sigma}$ in place of $X$), for any normal Gaussian variable $Z$ we have, 
    \begin{equation*}
        \left\lVert  T_{\rho} f \right\rVert_q^q = \left\lVert 1+\sigma \frac{X}{\sigma} \right\rVert_q^q \le \left\lVert  1 + \sigma Z \right\rVert_{q}^{q} + \beta^{q} \left\lVert  \sigma \frac{X}{\sigma} \right\rVert_q^q = \left\lVert  1 + \sigma Z \right\rVert_{q}^{q} + \beta^{q} \left\lVert  L f \right\rVert_q^q.
    \end{equation*}  
    The assertion of the lemma follows, as $\sigma Z$ has the same distribution as $\sum_{i=1}^{k-1} \alpha_i Z_i$. 
    \end{proof}

\subsubsection{General functions}

Now we are ready to prove Proposition~\ref{lem:tenzorization}.

\begin{proof}
    Let $T_\rho^{(i)}$ be the operator $f \mapsto E_i[f] + \rho L_i[f]$. We may write $T_\rho$ as the composition of the operators $T_\rho^{(i)}$, since 
    \[
    T_{\rho}^{(i)} = (I \otimes \cdots \otimes \underbrace{T_\rho}_i \otimes \cdots \otimes I).
    \]
We claim that for any function $g$ we have
\begin{equation}\label{eq: applying one dimensional hypercontractivity}
\lVert T_{\rho}^{(i)} g \rVert_{q}^q \le \lVert G_i g\rVert_q^q+ \beta^q\lVert L_i g\rVert_q^q. 
\end{equation}
We will then apply this assertion repeatedly (for $i=1,\ldots,n$) to complete the proof.

To prove~\eqref{eq: applying one dimensional hypercontractivity}, we may assume without loss of generality that $i=1$.  We may choose a random input  $x\sim \mu$ and write it as $(x_1,y),$ where $y$ denotes the last $n-1$ coordinates of $x$. Let $\mathrm{ev}_y\colon L^2(\Omega^{\{2,\ldots, n\}})\to \mathbb{R}$ be the `evaluation at $y$' operator given by $g\mapsto g(y)$. Then the restriction operator  $g\mapsto g_{\{2,\ldots , n\}\to y}$ takes the form $I\otimes \mathrm{ev_y}$. This shows that the restriction operator commutes with $T_{\rho}^{(1)},E_1,$ and $L_1$.
We may now apply Lemma~\ref{lem:n=1_many_vars} to the restricted function $g_y:=g_{\{2,\ldots, n\}\to y}$ and take expectations to obtain: 
    
        \begin{align*}
            \lVert T_{\rho}^{(i)} g \rVert_{q}^q = \mathbb{E}_x\left[\left |T_{\rho}^{(1)} g(x)\right |^q\right]
            = \mathbb{E}_y\left[\lVert  T_{\rho}^{(1)} \left(g_{y} \right)\rVert_q^q\right]  
             \leq 
            \EE_{y} \left[ \EE \left[ |\enc_{\{1\}}(g_y)|^q  + \beta^q |L_1 g_y|^q   \right] \right] = 
            \lVert  \enc_{1} g \rVert_q^q + \beta^q \lVert  L_1 g \rVert_q^q,
        \end{align*}
        where the last equality follows from Fubini's theorem. This proves~\eqref{eq: applying one dimensional hypercontractivity}.

\medskip

        We now complete the proof by showing inductively that for every $m\le n$, we have 
        \begin{equation}\label{eq:induction}
        \lVert T_{\rho}^{(m)}\circ T_{\rho}^{(m-1)}\circ \cdots \circ T_{\rho}^{(1)}f\rVert_q^q \le \sum_{S\subseteq [m]}\beta^{q|S|}\lVert \left(L_S\circ G_{[m]\setminus S}\right)f\rVert_q^q.  
        \end{equation}
        This will complete the proof by setting $m=n$. 
        
        For $m=1$, the claim is exactly~\eqref{eq: applying one dimensional hypercontractivity}, with $g=f$ and $i=1$. Let us assume that~(\ref{eq:induction}) holds for $m-1$ and prove it for $m$. Write 
        \[
        T_{\rho}^{[m-1]}=  T_{\rho}^{(m-1)}\circ \cdots \circ T_{\rho}^{(1)}.
        \] 
        We may apply (\ref{eq: applying one dimensional hypercontractivity}) and the commutative properties of tensor products to obtain that 
        \begin{align*}
        \lVert T_{\rho}^{(m)}T_{\rho}^{[m-1]}f\rVert_q^q  
        \le
        \lVert  G_{m} T_{\rho}^{[m-1]} f\rVert_q^q  + \beta^q \lVert  L_m  T_{\rho}^{[m-1]} f\rVert_q^q 
        = \lVert  T_{\rho}^{[m-1]} G_{m} f\rVert_q^q  + \beta^q \lVert   T_{\rho}^{[m-1]} L_m f\rVert_q^q.        
        \end{align*}
        Applying the induction hypothesis with $L_m(f)$ and $G_{m}(f)$ in place of $f$ completes our inductive proof. 
\end{proof}

\section{Sharp Hypercontractive Inequalities for Global Functions on $\Omega^n$}
\label{sec:hypercontractive}

In this section we prove sharp hypercontractive inequalities for functions on $\Omega^n$. First, we prove a hypercontractive inequality for general functions, and then we use it to prove a more powerful hypercontractive inequality for global functions. Finally, we use the latter inequality to prove a slightly stronger version of Theorem~\ref{cor:hypercontractivity for global functions - intro}.

\subsection{Hypercontractivity for general functions}

We prove the following theorem.
\begin{thm}
\label{thm:bounding_noised_q_norm_with_derivatives}
Let $\rho \leq 1/3$ and $q \ge 2$. Then for any $f \in L^2(\Omega^n,\mu)$,
        \[ \lVert  \totnoise f \rVert_q^q \leq \sum_{S \subseteq [n]} \beta^{q |S|} q^{-q|S|/2} \EE_{x \sim \Omega^S} \lVert  D_{S, x} f \rVert_2^q ,\]
where $\beta=\rho \Big(1 + \frac{2 (q-2)}{\log(1/\rho)}\Big)$.
\end{thm}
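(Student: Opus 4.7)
The plan is to factor $\totnoise = T_\rho \circ T_{1/\sqrt{q}}$ via multiplicativity \eqref{eq:noise_efron_stein}, apply Proposition~\ref{lem:tenzorization} to $T_\rho$ on the function $T_{1/\sqrt{q}} f$, and then reduce each summand to a Gaussian hypercontractivity estimate. Proposition~\ref{lem:tenzorization} gives
\[
\|\totnoise f\|_q^q \le \sum_{S\subseteq[n]}\beta^{q|S|}\,\|L_S G_{S^c} T_{1/\sqrt{q}} f\|_q^q,
\]
so it suffices to show $\|L_S G_{S^c} T_{1/\sqrt{q}} f\|_q^q \le q^{-q|S|/2}\,\EE_{x\sim\Omega^S}\|D_{S,x}f\|_2^q$ for each $S$.

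To that end, I would split $T_{1/\sqrt{q}}$ into its tensor factors on the $S$-coordinates and on the $S^c$-coordinates, and push each factor through $L_S G_{S^c}$ by commutation. On the $S$-coordinates, the one-variable identity $T_\rho L = \rho L$ (both operators respect the decomposition into constants and mean-zero functions, with $T_\rho$ scaling the mean-zero part by $\rho$ and $L$ killing constants) tensorizes to give the scalar factor $q^{-|S|/2}$ in front of $L_S$. On the $S^c$-coordinates, the Gaussian encoding intertwines discrete noise with the Ornstein--Uhlenbeck semigroup $T^G_\rho$: since $\enc$ maps constants to constants and each $f_i$ (orthonormal of mean zero) to $z_i$, and since both $T_\rho$ and $T^G_\rho$ scale their respective level-1 spaces by $\rho$, we have $\enc\circ T_\rho = T^G_\rho\circ\enc$. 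Tensorizing gives $G_{S^c}\,T^{S^c}_{1/\sqrt{q}} = (T^G_{1/\sqrt{q}})^{\otimes S^c}\,G_{S^c}$. Using also that $L_S$ commutes with any operator acting purely on $S^c$-coordinates, the combined commutations yield
\[
L_S G_{S^c} T_{1/\sqrt{q}} f \;=\; q^{-|S|/2}\,\bigl(T^G_{1/\sqrt{q}}\bigr)^{\otimes S^c}\bigl(L_S G_{S^c} f\bigr).
\]

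At this point I would apply Gaussian hypercontractivity fiberwise in $x_S$. The key identity is $(L_S G_{S^c} f)_{S\to x_S} = G_{S^c}(D_{S,x_S} f)$, which comes directly from the definition $D_{S,x_S} f = (L_S f)_{S\to x_S}$ and the fact that $G_{S^c}$ touches only $S^c$-coordinates. Since $1/\sqrt{q}\le 1/\sqrt{q-1}$, Gaussian hypercontractivity gives $\|(T^G_{1/\sqrt{q}})^{\otimes S^c}g\|_q \le \|g\|_2$ for every $g$ in the Gaussian $L^2$-space, and applied to $g = G_{S^c}(D_{S,x_S}f)$ together with the fact that $G_{S^c}$ is an $L^2$-isometry (an orthonormal basis goes to an orthonormal system) yields $\|(T^G_{1/\sqrt{q}})^{\otimes S^c}G_{S^c}(D_{S,x_S}f)\|_q \le \|D_{S,x_S}f\|_2$. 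Raising to the $q$-th power and averaging over $x_S$ by Fubini, then multiplying by $q^{-q|S|/2}$ and summing over $S$, delivers the theorem.

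The main obstacle is purely bookkeeping the commutations cleanly — in particular, verifying that $\enc$ intertwines $T_\rho$ with $T^G_\rho$ on all of $L^2(\Omega,\mu)$ and not merely on the level-1 subspace. This works because both noise operators are diagonalized by the orthogonal decomposition $V_{=\emptyset}\oplus V_{=\{1\}}$ with matching eigenvalues $1$ and $\rho$, so the intertwining holds on each summand and extends by linearity; the rest of the argument is tensorization and Fubini.
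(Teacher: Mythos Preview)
Your proposal is correct and follows essentially the same route as the paper: apply Proposition~\ref{lem:tenzorization} to $T_{1/\sqrt{q}}f$, then use the commutation relations (Lemmas~\ref{lem:swap laplace noise derivative} and~\ref{lem:Basic Fourier formulas}) to extract the factor $q^{-|S|/2}$ and bring the noise to the Gaussian side, and finish with Theorem~\ref{thm: hypercontractivity in Gaussian space} plus the fact that $G_{S^c}$ is an $L^2$-isometry. The only cosmetic difference is that you perform the commutations at the level of $L_S$ before restricting to $x_S$, whereas the paper restricts first (writing everything via $D_{S,x}$) and then commutes; the content is identical.
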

In the proof we rely on the classical hypercontractive inequality in Gaussian space proved by Bonami~\cite{Bonami}, Gross~\cite{gross}, and Beckner~\cite{Beckner}.
\begin{thm}\label{thm: hypercontractivity in Gaussian space}
Let $q\ge p \ge 1$ and let $\rho \le \sqrt{\frac{p-1}{q-1}}.$ For any $f\in L^p(\mathbb{R}^n,\gamma)$, we have
$\lVert T_\rho f\rVert_q\le \lVert f\rVert_p$. 
\end{thm}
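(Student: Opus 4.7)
The plan is to combine Proposition~\ref{lem:tenzorization} with the Gaussian hypercontractive inequality (Theorem~\ref{thm: hypercontractivity in Gaussian space}) via a reduction that moves each summand to Gaussian space, fiber-by-fiber in the $S$-coordinates. Using multiplicativity, I write $\totnoise = T_\rho \circ \contrnoise$ and apply Proposition~\ref{lem:tenzorization} to $\contrnoise f$ in place of $f$:
$$\|\totnoise f\|_q^q \;\le\; \sum_{S \subseteq [n]} \beta^{q|S|}\, \bigl\|(L_S \circ \enc_{S^c})(\contrnoise f)\bigr\|_q^q.$$
It then suffices to show, for each $S$, that $\bigl\|(L_S \circ \enc_{S^c})(\contrnoise f)\bigr\|_q^q \le q^{-q|S|/2}\, \mathbb{E}_{x \sim \mu^S}\|D_{S,x} f\|_2^q$.

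For the inner bound, I split $\contrnoise = \contrnoise^{(S)} \otimes \contrnoise^{(S^c)}$ into its actions on $S$- and $S^c$-coordinates. On a single coordinate, the identity $\enc \circ T_{1/\sqrt{q}} = U_{1/\sqrt{q}} \circ \enc$ holds, where $U_{1/\sqrt{q}}$ denotes the Ornstein--Uhlenbeck semigroup on Gaussian space: both operators fix the constant $1$ and send each non-constant basis element $f_i$ to $(1/\sqrt{q})\,z_i$. Tensorizing gives $\enc_{S^c} \circ \contrnoise^{(S^c)} = U_{1/\sqrt{q}}^{(S^c)} \circ \enc_{S^c}$. Combining this with commutativity of operators acting on disjoint coordinate sets, and with Lemma~\ref{lem:swap laplace noise derivative} (which lets $L_S$ and $\contrnoise^{(S)}$ commute), I obtain
$$(L_S \circ \enc_{S^c})(\contrnoise f) \;=\; \contrnoise^{(S)} \,U_{1/\sqrt{q}}^{(S^c)}\, \enc_{S^c}(L_S f).$$
The key observation is that $L_S f = \sum_{T \supseteq S} f^{=T}$ lies in the direct sum of eigenspaces $V^{=T}$ with $T \supseteq S$, each of which is an eigenspace of $\contrnoise^{(S)}$ with eigenvalue $(1/\sqrt{q})^{|T \cap S|} = q^{-|S|/2}$; since $\enc_{S^c}$ does not touch the $S$-coordinates, $\enc_{S^c}(L_S f)$ remains an eigenfunction of $\contrnoise^{(S)}$ with the same eigenvalue, so $\contrnoise^{(S)}\enc_{S^c}(L_S f) = q^{-|S|/2}\,\enc_{S^c}(L_S f)$.

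Pulling out this scalar gives $\bigl\|(L_S \circ \enc_{S^c})(\contrnoise f)\bigr\|_q^q = q^{-q|S|/2}\bigl\|U_{1/\sqrt{q}}^{(S^c)}\enc_{S^c}(L_S f)\bigr\|_q^q$. For each fixed $x_S \in \Omega^S$, I apply Theorem~\ref{thm: hypercontractivity in Gaussian space} with $p=2$ and noise rate $1/\sqrt{q} \le 1/\sqrt{q-1}$ to the function $\enc_{S^c}(L_S f)(x_S, \cdot) = \enc_{S^c}(D_{S,x_S} f)$, obtaining the pointwise bound
$$\bigl\|U_{1/\sqrt{q}}^{(S^c)}\enc_{S^c}(D_{S,x_S} f)\bigr\|_{L^q(\gamma^{S^c})} \;\le\; \bigl\|\enc_{S^c}(D_{S,x_S} f)\bigr\|_{L^2(\gamma^{S^c})} \;=\; \|D_{S,x_S} f\|_2,$$
where the last equality uses that $\enc_{S^c}$ sends an orthonormal basis to an orthonormal basis, hence is an $L^2$-isometry. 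Raising both sides to the $q$-th power, integrating over $x_S$, and substituting back yields the desired estimate. The main technical step is the second paragraph: carefully tracking the commutation relations and pinning down the eigenvalue $q^{-|S|/2}$; the rest is a direct application of Gaussian hypercontractivity together with the isometry property of the encoding.
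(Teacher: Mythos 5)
Your proposal does not prove the stated theorem. The statement in question is the classical Gaussian hypercontractive inequality of Bonami, Gross, and Beckner: for $q\ge p\ge 1$, $\rho\le\sqrt{\frac{p-1}{q-1}}$, and $f\in L^p(\mathbb{R}^n,\gamma)$, one has $\|T_\rho f\|_q\le\|f\|_p$, where $T_\rho$ is the Ornstein--Uhlenbeck semigroup on Gaussian space. The paper does not prove this; it cites it as a known external ingredient. What you have written is instead essentially the paper's proof of Theorem~\ref{thm:bounding_noised_q_norm_with_derivatives} (the bound $\|\totnoise f\|_q^q\le\sum_S \beta^{q|S|}q^{-q|S|/2}\EE_x\|D_{S,x}f\|_2^q$ for functions on the discrete product space $\Omega^n$), obtained by combining Proposition~\ref{lem:tenzorization} with commutation relations for $\enc_S$, $L_S$, $D_{S,x}$, and $T_\rho$.

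The fatal problem is circularity: in your final step you ``apply Theorem~\ref{thm: hypercontractivity in Gaussian space} with $p=2$ and noise rate $1/\sqrt{q}\le 1/\sqrt{q-1}$'' to the encoded derivative $\enc_{S^c}(D_{S,x_S}f)$. That is precisely the theorem you were asked to prove, so the argument assumes its own conclusion; moreover, nothing in your reduction ever addresses a general function on $(\mathbb{R}^n,\gamma)$ with general exponent $p$, so even setting the circularity aside, the statement proved is not the target statement. A genuine proof of the Gaussian inequality must work in Gaussian space itself --- for instance via the two-point (Bonami) inequality on the Boolean cube together with tensorization and a central limit/invariance passage to Gaussians, or via Gross's logarithmic Sobolev inequality and differentiation of $\|T_{\rho(t)}f\|_{q(t)}$ along the semigroup --- and no step of that kind appears in your proposal.
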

We also use the following simple commutativity properties of the operators $G_S,T_\rho,$ and $D_{S, x}$ which complement the `swapping rules' of $L_S$ and $D_{S,x}$ with $T_\rho$ that were presented in Lemma~\ref{lem:swap laplace noise derivative}.
\begin{lem}\label{lem:Basic Fourier formulas}
\begin{enumerate}
    
    \item For any $S,T \subset [n]$ we have $T_{\rho} G_S = G_{S}T_{\rho}$.
    \item If $S,T$ are disjoint, then $G_T D_{S,x} = D_{S,x}G_T$.
\end{enumerate}
\end{lem}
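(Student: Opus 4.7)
Both identities come down to checking that the underlying one-dimensional operators commute and then tensorizing. Throughout, we view $T_\rho$ on the target space $L^2(\Omega^{S^c}\times(\mathbb{R}^{k-1})^S,\mu^{S^c}\otimes\gamma^S)$ as the tensor product of the discrete noise operator on the $\Omega$-factors and the Ornstein--Uhlenbeck operator on the Gaussian factors (both having the same eigenvalue $\rho$ on the degree-one part, cf.\ Theorem~\ref{thm: hypercontractivity in Gaussian space} and~\eqref{eq:noise_efron_stein}).

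\textbf{Part 1.} Since $T_\rho = \bigotimes_{i=1}^n T_\rho^{(i)}$ and $G_S = \bigotimes_{i\in S} G \otimes \bigotimes_{i\in S^c}\mathrm{id}$, the identity $T_\rho G_S = G_S T_\rho$ will follow by tensorization from the one-dimensional statement $G \circ T_\rho^{(1)} = T_\rho^{\mathrm{OU}} \circ G$, where $T_\rho^{(1)}$ is the discrete noise operator on $L^2(\Omega,\mu)$ and $T_\rho^{\mathrm{OU}}$ is the Ornstein--Uhlenbeck operator on $L^2(\mathbb{R}^{k-1},\gamma)$. To verify this, test both sides on the orthonormal basis $\{1,f_1,\dots,f_{k-1}\}$ used to define $G$. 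The constant function is fixed by both $T_\rho^{(1)}$ and $T_\rho^{\mathrm{OU}}$, and by $G$. Each $f_i$ lies in $V_{=\{1\}}$, so $T_\rho^{(1)} f_i=\rho f_i$, while $G f_i = z_i$ and $T_\rho^{\mathrm{OU}} z_i=\rho z_i$ (as $z_i$ is a first-Hermite function). Hence $G T_\rho^{(1)} f_i = \rho z_i = T_\rho^{\mathrm{OU}} G f_i$, and the one-dimensional identity follows by linearity. Tensorizing over the coordinates in $S$ (and pairing with identity on $S^c$) yields Part 1.

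\textbf{Part 2.} Unwinding the definition, $D_{S,x} f = [L_S f]_{S\to x_S}$. Since $S$ and $T$ are disjoint, $L_S = \bigotimes_{i\in S} L \otimes \bigotimes_{i\notin S}\mathrm{id}$ and $G_T = \bigotimes_{i\in T} G \otimes \bigotimes_{i\notin T}\mathrm{id}$ act on disjoint tensor factors, so $G_T L_S = L_S G_T$. Moreover, the restriction map $g \mapsto g_{S\to x_S}$ can be written as $\mathrm{ev}_{x_S}\otimes \mathrm{id}_{S^c}$, which touches only the $S$-coordinates; since $T\subseteq S^c$, the operator $G_T$ commutes with this evaluation as well. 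Chaining the two commutations gives
\[
G_T D_{S,x} f = G_T [L_S f]_{S\to x_S} = [G_T L_S f]_{S\to x_S} = [L_S G_T f]_{S\to x_S} = D_{S,x} G_T f,
\]
which is Part 2.

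There is no real obstacle here; the only subtlety is bookkeeping: making sure that the $T_\rho$ on the two sides of Part 1 is interpreted on the appropriate (possibly Gaussian-extended) product space, and that the disjointness $S\cap T=\emptyset$ in Part 2 is used to separate the tensor factors cleanly. Once those conventions are fixed, both identities reduce to one-dimensional eigenvalue bookkeeping and tensorization.
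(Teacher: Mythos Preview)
Your proof is correct and follows essentially the same tensorization approach as the paper: both reduce the identities to coordinate-wise checks using the tensor descriptions $T_\rho=\bigotimes_i T_\rho^{(i)}$, $G_S=\bigotimes_{i\in S}G\otimes\bigotimes_{i\notin S}\mathrm{id}$, and then invoke the one-dimensional eigenvalue computation. The only minor cosmetic difference is that for Part~2 the paper uses the tensor form $D_{S,x}=\bigotimes_{i\in S}D_{i,x_i}\otimes\bigotimes_{i\notin S}\mathrm{id}$ directly, whereas you unpack $D_{S,x}$ as $L_S$ followed by evaluation and commute each piece with $G_T$ separately; this is the same argument in slightly different bookkeeping.
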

\begin{proof}
We make use of the following tensor descriptions of our operators. 
\[D_{S, x} = \bigotimes_{i \in S} D_{i,x_i} \otimes \bigotimes_{i \in S^c} I, \qquad \qquad T_{\rho} = \bigotimes_{i=1}^n T_{\rho}^{(i)}, \qquad \mbox{ and } \qquad \enc_S = \bigotimes_{i\in S} \enc_i\otimes \bigotimes_{i\in S^c} I. \]
As the composition of operators is performed coordinate-wise,~(2) follows immediately from the tensor description, and~(1) reduces to the case $n=1$, where it follows easily from the formula $T_{\rho}f=f^{=\emptyset}+\rho f^{=\{1\}}$ (which is the $n=1$ case of $T_{\rho}f=\sum_S \rho^{|S|}f^{=S}$) and the formula $G(\alpha_0+\sum_{i=1}^{k-1}\alpha_i x_i)=\alpha_0+\sum_{i=1}^{k-1} \alpha_i z_i$ presented above. 
\end{proof}

Now we are ready to present the proof of 
Theorem~\ref{thm:bounding_noised_q_norm_with_derivatives}.

    \begin{proof}
By Proposition~\ref{lem:tenzorization} and Lemmas~\ref{lem:swap laplace noise derivative},~\ref{lem:Basic Fourier formulas} we have      
        \begin{align*}
            \lVert  \totnoise f \rVert_q^q 
            & = \lVert  \encnoise \contrnoise f \rVert_q^q  
            \leq 
            \sum_{S \subseteq [n]} \beta^{q |S|} \lVert  (L_S \circ \enc_{S^c}) \contrnoise f \rVert_q^q  
             \\ & =
            \sum_{S \subseteq [n]} \beta^{q |S|} \EE_{x \sim \Omega^S} \left\lVert  \left[ L_S \left( \enc_{S^c} \contrnoise f \right) \right]_{S \rightarrow x} \right\rVert_q^q  
            = 
            \sum_{S \subseteq [n]} \beta^{q |S|} \EE_{x \sim \Omega^S} \lVert  \enc_{S^c} D_{S, x} \contrnoise f \rVert_q^q  
             \\ & =
            \sum_{S \subseteq [n]} \beta^{q |S|} \EE_{x \sim \Omega^S} \left\lVert  \left( \contrnoiseamount \right)^{|S|} \enc_{S^c} \contrnoise D_{S, x} f \right\rVert_q^q  \label{eq:swap_op}
            = 
            \sum_{S \subseteq [n]} \beta^{q|S|} q^{\frac{-q |S|}{2}} \EE_{x \sim \Omega^S} \lVert  \enc_{S^c} \contrnoise D_{S, x} f \rVert_q^q  
             \\ & =
            \sum_{S \subseteq [n]} \beta^{q|S|} q^{-\frac{q |S|}{2}} \EE_{x \sim \Omega^S} \lVert  \contrnoise \left( \enc_{S^c} D_{S, x} f \right) \rVert_q^q .  
        \end{align*}
        The function $\enc_{S^c} D_{S, x} f$ is a function on the Gaussian space $(\mathbb{R}^{k-1})^{S^c}$, and hence, we may apply Theorem \ref{thm: hypercontractivity in Gaussian space} to obtain 
        \begin{align*}
            \lVert  \totnoise f \rVert_q^q 
            &\leq \sum_{S \subseteq [n]} \beta^{q|S|} q^{-\frac{q |S|}{2}} \EE_{x \sim \Omega^S} \lVert  \contrnoise \left( \enc_{S^c} D_{S, x} f \right) \rVert_q^q
            \leq
            \sum_{S \subseteq [n]} \beta^{q|S|} q^{-\frac{q |S|}{2}} \EE_{x \sim \Omega^S} \lVert  \enc_{S^c} D_{S, x} f \rVert_2^q \\
            &=
            \sum_{S \subseteq [n]} \beta^{q|S|} q^{-\frac{q |S|}{2}} \EE_{x \sim \Omega^S} \lVert  D_{S, x} f \rVert_2^q,
        \end{align*}
        where the last equality holds since $G_{S^c}$ preserves 2-norms. This completes the proof.
    \end{proof}

\subsection{Hypercontractivity for global functions}

In this section, we will need a `derivative-based' notion of globalness, which we now define. The transition from this notion to the `restriction-based' notion used in the statement of our main theorems is established in Lemma~\ref{lem:restriction global implies global}.
\begin{definition} \label{def: global}
    Let $r,\gamma>0$. A function $f \in L^2(\Omega^n,\mu)$ is called \emph{$(r,\gamma)$-$L_2$-global} if $\lVert D_{S,x}f\rVert_2\le r^{|S|}\gamma$ for all $S\subseteq [n]$ and $x\in \Omega^S.$
\end{definition}

We prove the following theorem.
\begin{thm} \label{thm:hypercontractivity for global functions - alternative}
     Let $r, \gamma>0,$ and let $q \ge 2$. Let $f\colon \Omega^n\to \mathbb{R}$ be an $(r ,\gamma)$-$L_2$-global function. Let $\rho = \frac{1}{\sqrt{2q}} \rho'$ and suppose that $\rho' \leq 1/3$ and that $\beta = \rho' \left( 1 + \frac{2(q-2)}{\log(1 / \rho')} \right)$ satisfies
     \begin{equation}\label{Eq:Cond-rho'}
        \beta \leq \sqrt{q} (r/\sqrt{2})^{-\frac{q-2}{q}}.
     \end{equation}
      Then 
     \[\lVert T_{\rho}f\rVert_q^q \le \lVert f\rVert_2^2\gamma^{q-2}.\]
 \end{thm}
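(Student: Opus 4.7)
The plan is to apply Theorem~\ref{thm:bounding_noised_q_norm_with_derivatives} not to $f$ itself but to the auxiliary function $\tilde g := T_{1/\sqrt{2}} f$. This pre-averaging by $T_{1/\sqrt{2}}$ will play two roles simultaneously: using Lemma~\ref{lem:swap laplace noise derivative} together with the $L^2$-contractivity of $T_{1/\sqrt{2}}$, one gets $\|D_{S,x}\tilde g\|_2 = 2^{-|S|/2} \|T_{1/\sqrt{2}} D_{S,x} f\|_2 \le (r/\sqrt{2})^{|S|}\gamma$, so $\tilde g$ is $(r/\sqrt{2},\gamma)$-$L_2$-global, exactly matching the quantity $r/\sqrt{2}$ appearing in the hypothesis~\eqref{Eq:Cond-rho'}; and since $T_\rho = T_{\rho'/\sqrt{q}} \circ T_{1/\sqrt{2}}$, I have $T_\rho f = T_{\rho'/\sqrt{q}} \tilde g$. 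Since $\rho' \le 1/3$, Theorem~\ref{thm:bounding_noised_q_norm_with_derivatives} applied with its parameter ``$\rho$'' equal to our $\rho'$ (so that its ``$\beta$'' coincides with ours) will yield
\[
\|T_\rho f\|_q^q \le \sum_{S\subseteq[n]} \beta^{q|S|}\, q^{-q|S|/2}\, \mathbb{E}_{x\sim\Omega^S}\|D_{S,x}\tilde g\|_2^q.
\]

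For the next step I would interpolate each inner expectation by applying the pointwise globalness of $\tilde g$ to $q-2$ of the $q$ factors, i.e. $\|D_{S,x}\tilde g\|_2^q \le \bigl((r/\sqrt{2})^{|S|}\gamma\bigr)^{q-2}\|D_{S,x}\tilde g\|_2^2$, and then use the identity $\mathbb{E}_x\|D_{S,x}\tilde g\|_2^2 = \|L_S\tilde g\|_2^2$. This gives
\[
\|T_\rho f\|_q^q \le \gamma^{q-2} \sum_{S\subseteq[n]} \Bigl[(\beta/\sqrt{q})^q (r/\sqrt{2})^{q-2}\Bigr]^{|S|} \|L_S\tilde g\|_2^2,
\]
and the hypothesis~\eqref{Eq:Cond-rho'} raised to the power $q$ reads exactly $(\beta/\sqrt{q})^q (r/\sqrt{2})^{q-2} \le 1$, so every bracketed factor is at most $1$.

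The last step is the identity $\sum_{S\subseteq[n]}\|L_S\tilde g\|_2^2 = \|f\|_2^2$. Since $\tilde g^{=T} = 2^{-|T|/2} f^{=T}$ by~\eqref{eq:noise_efron_stein}, and $\|L_S\tilde g\|_2^2 = \sum_{T\supseteq S}\|\tilde g^{=T}\|_2^2$ by Lemma~\ref{lem:laplacian_efron_stein}, swapping the order of summation will give $\sum_T 2^{|T|}\cdot 2^{-|T|}\|f^{=T}\|_2^2 = \|f\|_2^2$, finishing the bound. The hard part is not any individual estimate but discovering the calibration: applying Theorem~\ref{thm:bounding_noised_q_norm_with_derivatives} directly to $f$ (with its $\rho$ set to $\rho'/\sqrt{2}$) produces a right-hand side of the form $\gamma^{q-2}\sum_T (3/2)^{|T|}\|f^{=T}\|_2^2$, which blows up exponentially in the level; the extra factor $T_{1/\sqrt{2}}$ damps the higher levels by precisely the amount required to cancel this growth, while at the same time tightening the effective globalness parameter from $r$ to the $r/\sqrt{2}$ that appears in the hypothesis.
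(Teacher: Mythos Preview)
Your proof is correct and follows essentially the same route as the paper: define $f' = T_{1/\sqrt{2}} f$, apply Theorem~\ref{thm:bounding_noised_q_norm_with_derivatives} with parameter $\rho'$, interpolate $\|D_{S,x}f'\|_2^q \le ((r/\sqrt{2})^{|S|}\gamma)^{q-2}\|D_{S,x}f'\|_2^2$, use~\eqref{Eq:Cond-rho'} to make the geometric factor at most $1$, and close with the identity $\sum_S \|L_S T_{1/\sqrt{2}} f\|_2^2 = \|f\|_2^2$ (which the paper states separately as Lemma~\ref{lem: Efron Stein formula}). Your explanatory remark on why the $T_{1/\sqrt{2}}$ pre-averaging is needed to collapse $\sum_S \|L_S \cdot\|_2^2$ back to $\|f\|_2^2$ is also on point.
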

In the proof we use the following simple lemma.
\begin{lem}\label{lem: Efron Stein formula}
For any $f\colon \Omega^n\to \mathbb{R},$ we have \[\sum_{S\subseteq [n]} \lVert  L_S T_{1/\sqrt{2}}[f]\rVert_2^2 = \lVert f\rVert_2^2.
\]
 \end{lem}
 \begin{proof}
     By Lemma~\ref{lem:laplacian_efron_stein} and~\eqref{eq:noise_efron_stein}, we have 
     \[ L_S T_{1/\sqrt{2}}[f] = \sum_{T\supseteq S} 2^{-|T|/2} f^{=T}.\]
     Taking 2-norms, we obtain 
     \[\sum_{S\subseteq [n]} \lVert  L_S T_{1/\sqrt{2}}[f]\rVert_2^2 = \sum_{S\subseteq [n]}\sum_{T\supseteq S} 2^{-|T|}\lVert f^{=T}\rVert_2^2= \sum_{T\subseteq [n]}\lVert f^{=T}\rVert_2^2 =  \lVert f\rVert_2^2,\]
     as asserted.
 \end{proof}

 \begin{proof}[Proof of Theorem~\ref{thm:hypercontractivity for global functions - alternative}]
     Write $f'=T_{\frac{1}{\sqrt{2}}} f.$ By Theorem~\ref{thm:bounding_noised_q_norm_with_derivatives}, we have 
     \begin{equation}\label{Eq:hyper-aux1}
     \lVert T_{\rho}f\rVert_q^q = \lVert T_{\rho' \frac{1}{\sqrt{q}}}f' \rVert_q^q \le \sum_{S\subseteq [n]} \left( \frac{\beta}{\sqrt{q}} \right)^{q|S|} \mathbb{E}_{x\sim \Omega^S} \lVert  D_{S,x}[f']\rVert_2^q .     
     \end{equation}
    By Lemma~\ref{lem:swap laplace noise derivative}, for any $S \subset [n]$ and any $x \in \Omega^S$, we have $D_{S,x}[f']=D_{S,x}T_{1/\sqrt{2}}f = \left( \frac{1}{\sqrt{2}} \right)^{|S|} T_{1/\sqrt{2}} D_{S, x}[f]$, and hence, 
    \begin{align}\label{Eq:hyper-aux2}
    \begin{split}
    \lVert  D_{S,x}[f']\rVert_2^{q-2} &= \lVert  \left( \frac{1}{\sqrt{2}} \right)^{|S|} T_{1/\sqrt{2}} D_{S, x}[f] \rVert_2^{q-2} =
    \left( \frac{1}{\sqrt{2}} \right)^{|S|(q-2)}
    \lVert T_{1/\sqrt{2}} D_{S,x} f\rVert_2^{q-2} \\ &\le \left( \frac{1}{\sqrt{2}} \right)^{|S|(q-2)}\lVert D_{S,x} f\rVert_2^{q-2} \le ((r/\sqrt{2})^{|S|}\gamma)^{q-2},     
    \end{split}
    \end{align}
    where the penultimate inequality holds since $T_{1/\sqrt{2}}$ is a contraction and the last inequality holds as $f$ is $(r ,\gamma)$-$L_2$-global. Since $\mathbb{E}_{x\sim \Omega^S} \lVert  D_{S,x}[f']\rVert_2^q \leq \max_{x\in \Omega^S} \lVert  D_{S,x}[f']\rVert_2^{q-2} \mathbb{E}_{x\sim \Omega^S} \lVert  D_{S,x}[f']\rVert_2^2$, we may combine~\eqref{Eq:hyper-aux1} and~\eqref{Eq:hyper-aux2} to obtain  
     \[
     \lVert T_{\rho} f\rVert_q^q \le \sum_{S\subseteq [n]} \left( \left( \frac{\beta}{\sqrt{q}} \right)^q \left( \frac{r}{\sqrt{2}} \right)^{q-2} \right)^{|S|} \gamma^{q-2} \mathbb{E}_{x\sim \Omega^S} \lVert D_{S,x} T_{\frac{1}{\sqrt{2}}} f\rVert_2^2 .
     \]
    By~\eqref{Eq:Cond-rho'}, we have 
    \[
    \left( \frac{\beta}{\sqrt{q}} \right)^q \left( \frac{r}{\sqrt{2}} \right)^{q-2} \le 1,
    \]
    and thus,
    \[
    \lVert T_{\rho} f\rVert_q^q \le \sum_{S\subseteq [n]} \gamma^{q-2} \mathbb{E}_{x\sim \Omega^S} \lVert D_{S,x} T_{\frac{1}{\sqrt{2}}} f\rVert_2^2 = \gamma^{q-2} \sum_{S\subseteq [n]} \lVert  L_S T_{1/\sqrt{2}} f\rVert_2^2 = \gamma^{q-2} \lVert f \rVert_2^2,
    \]
    where the last equality uses Lemma \ref{lem: Efron Stein formula}. This completes the proof.
\end{proof}

As the assertion of Theorem~\ref{thm:hypercontractivity for global functions - alternative} is somewhat cumbersome, we provide a weaker, yet useful, corollary.
\begin{cor} \label{cor:hypercontractivity for global functions - alternative}
     Let $r, \gamma>0,$ and let $q \ge 2$. Let $f\colon \Omega^n\to \mathbb{R}$ be an $(r ,\gamma)$-$L_2$-global function. If  
    \begin{enumerate}
        \item $\rho \le \frac{1}{3 \sqrt{2}} \min\left(\frac{1}{r^{\frac{q-2}{q}}q},\frac{1}{\sqrt{q}}\right)$, or

        \item $r \geq 1$ and $\rho \leq \frac{\log q}{16 r q}$,
    \end{enumerate}
then $\lVert T_{\rho}f\rVert_q^q \le \lVert f\rVert_2^2\gamma^{q-2}$.
 \end{cor}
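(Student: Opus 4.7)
The plan is to derive this corollary directly from Theorem~\ref{thm:hypercontractivity for global functions - alternative} by verifying its two hypotheses in each case. Writing $\rho' = \rho\sqrt{2q}$ so that $\rho = \frac{1}{\sqrt{2q}}\rho'$, and $\beta = \rho'(1 + \frac{2(q-2)}{\log(1/\rho')})$, the two conditions to check are (i) $\rho' \le 1/3$, and (ii) $\beta \le \sqrt{q}(r/\sqrt{2})^{-(q-2)/q}$.

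In case (1), the second term $\frac{1}{\sqrt{q}}$ in the minimum immediately yields $\rho' \le 1/3$, giving (i). For (ii) I would observe that $\beta$ is monotone increasing in $\rho'$ on $(0,1/3]$, and that $\rho' \le 1/3$ forces $\log(1/\rho') \ge \log 3$, so that $\beta \le \rho'(1 + \frac{2(q-2)}{\log 3}) \le C q \rho'$ for an absolute constant $C$. Combining this with the other term of the minimum, $\rho' \le \frac{1}{3 r^{(q-2)/q} \sqrt{q}}$, gives $\beta \le \frac{C\sqrt{q}}{r^{(q-2)/q}}$, which fits inside the right-hand side of (ii) since $(r/\sqrt{2})^{-(q-2)/q} \ge r^{-(q-2)/q}$. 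The subcase where the minimum is realised by $\frac{1}{\sqrt{q}}$ (equivalently $r^{(q-2)/q} \le 1/\sqrt{q}$) needs a separate check, using that in this regime the right-hand side of (ii) is $\ge q$, which dominates the crude bound on $\beta$.

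Case (2) is the more substantive one. Condition (i) follows easily from $\rho' \le \frac{\sqrt{2}\log q}{16 r \sqrt{q}}$ and the fact that $\log q/\sqrt{q}$ is uniformly bounded on $q \ge 2$. The key calculation for (ii) is that this sharper upper bound on $\rho'$, together with $r \ge 1$, gives a matching logarithmic improvement: $\log(1/\rho') \ge \log(16/\sqrt{2}) + \tfrac{1}{2}\log q - \log\log q \ge \tfrac{1}{4}\log q$ for all $q \ge 2$ (a routine calculus check). Hence $1 + \frac{2(q-2)}{\log(1/\rho')} \le \frac{C' q}{\log q}$, and so $\beta \le \frac{C' q \rho'}{\log q} \le \frac{C''\sqrt{q}}{r}$. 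Since $(r/\sqrt{2})^{-(q-2)/q} \ge \sqrt{q}/r$ for $r \ge 1$, this gives (ii), with the constant $1/16$ in the hypothesis being calibrated precisely so that $C''\le 1$.

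The argument is pure bookkeeping: there is no conceptual difficulty beyond Theorem~\ref{thm:hypercontractivity for global functions - alternative} itself. The only real care is required in case (2), where one must exploit the $\log q$ factor present in the hypothesis to compensate for the growth of $\frac{2(q-2)}{\log(1/\rho')}$ in $\beta$; this is the main obstacle, and it is precisely what forces the constant $1/16$.
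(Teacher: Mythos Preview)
Your approach is essentially the paper's: reduce to Theorem~\ref{thm:hypercontractivity for global functions - alternative} by checking $\rho'\le 1/3$ and the $\beta$-inequality. The logic is sound, but a few things should be tightened.

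In case~(1), the subcase analysis is unnecessary. Since $\rho\le \frac{1}{3\sqrt2}\min(A,B)$ implies both $\rho\le \frac{A}{3\sqrt2}$ and $\rho\le \frac{B}{3\sqrt2}$ simultaneously, you can (and do) use the second term to get $\rho'\le 1/3$ and the first term to get $\rho'\le \frac{1}{3 r^{(q-2)/q}\sqrt q}$, regardless of which one realises the minimum. The separate check you flag never arises. Also, your inequality ``$\beta\le \frac{C\sqrt q}{r^{(q-2)/q}}$ fits inside the right-hand side'' only works once you verify that your constant $C$, after absorbing the factor $1/3$ from $\rho'$, is at most $1$; this is true (one gets roughly $\tfrac{2}{3\log 3}<1$), but you should say so explicitly rather than leave it as ``an absolute constant''.

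In case~(2), the claim ``$(r/\sqrt2)^{-(q-2)/q}\ge \sqrt q/r$'' is a typo: you mean $(r/\sqrt2)^{-(q-2)/q}\ge 1/r$, i.e.\ $(r/\sqrt2)^{(q-2)/q}\le r$ for $r\ge 1$, which is correct since the exponent lies in $[0,1)$. After that fix, your $C''$ needs to be checked to be at most $1$; with $\log(1/\rho')\ge \tfrac14\log q$ one finds $C''\approx \frac{9\sqrt2}{16}<1$, so it works, but again this should be made explicit. (The paper's version uses the slightly cleaner observation $\rho'\le q^{-1/3}$, giving $\log(1/\rho')\ge \tfrac13\log q$ directly.)
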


\begin{proof}
    Set $\rho' = \sqrt{2q} \cdot \rho$ and $\beta = \rho' \left( 1 + \frac{2(q-2)}{\log(1 / \rho')} \right)$. It is clear that if either~(1) or~(2) holds then $\rho'<1/3$. Hence, in order to prove that Corollary~\ref{cor:hypercontractivity for global functions - alternative} follows from Theorem~\ref{thm:hypercontractivity for global functions - alternative}, it is sufficient to show that $\beta$ satisfies~\eqref{Eq:Cond-rho'}. 

    Since for any $q,\rho'>0$, 
    \[
        \frac{1}{3} \left( 1 + \frac{2q}{\log(1 / \rho')} \right) \le \frac{q}{\min \{q, \log(1 / \rho') \} },
     \]
    and as $q \ge 2$, $\rho' \le 1/3$, we have
    \[
        \beta = (3\rho') \cdot \frac{1}{3} \left( 1 + \frac{2(q-2)}{\log(1 / \rho')} \right) \le \frac{3 \rho' q}{\min \{q, \log (1 / \rho') \}} \le 3 \rho' q.
     \]
     If~(1) holds, then in particular, $\rho \le \frac{1}{3 \sqrt{2} r^{\frac{q-2}{q}} q}$, and thus,
     \[
     \beta \le 3 \rho' q \le 3 \frac{\sqrt{2q}}{3 \sqrt{2} r^{\frac{q-2}{q}} q} q \le \frac{\sqrt{q}}{r^{\frac{q-2}{q}}}, 
     \]
     which proves~\eqref{Eq:Cond-rho'}. 

    If~(2) holds (that is, if $r > 1$ and $\rho \leq \frac{\log q}{16 r q}$), then we have 
     \[
     \rho' \le \frac{1}{8 \sqrt{2}} \frac{\log q}{r \sqrt{q}} \le \frac{1}{q^{1/3}},
     \]
     since $q \ge 2$, and thus, $\min \{q, \log (1 / \rho') \} \ge \frac{1}{3} \log q$. Hence, 
     \[
     \beta \leq \frac{3 \rho' q}{\min \{q, \log (1 / \rho') \}} \leq \frac{3 \frac{1}{8 \sqrt{2}} \frac{\log q}{r \sqrt{q}} q}{\frac{1}{3} \log q} \le \frac{\sqrt{q}}{r}, 
     \]
     which implies~\eqref{Eq:Cond-rho'} since $r > 1$. This completes the proof. 
\end{proof}


\subsection{Proof of Theorem~\ref{cor:hypercontractivity for global functions - intro}} 
\label{sec: proof of hyper for glob}

    We use the following `derivative-based' notion of globalness, which generalizes Definition~\ref{def: global}.
    \begin{definition}
        For $p \geq 1$, we say that $f$ is \emph{$(r,\gamma,d)$-$L_p$-global} if $\lVert D_{S, x}f \rVert_p \le r^{|S|}\gamma$ for all $S\subseteq [n]$ of size $\le d$ and all $x\in \Omega^S.$
    \end{definition}
    The following easy lemma shows that the `restriction-based' notion of globalness 
    implies the `derivative-based' notion, with almost the same parameters.
    \begin{lem}\label{lem:restriction global implies global}
        Let $p \geq 1$, and let $f \colon \Omega^n \to \mathbb{R}$. Suppose that for some $r \geq 1,\gamma>0$, we have $\lVert f_{S\to x}\rVert_p \le r^{|S|}\gamma$ for all $S$ of size $\le d$ and all $x\in \Omega^S.$ Then $f$ is $(2r,\gamma,d)$-$L_p$-global. 
    \end{lem}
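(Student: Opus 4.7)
The plan is to use the expansion $L_S = \sum_{T \subseteq S} (-1)^{|T|} E_T$ from Lemma~\ref{lem:laplacian_efron_stein}, restrict at $x_S$, and apply the triangle inequality one summand at a time. Concretely, since $D_{S,x} f = [L_S f]_{S \to x_S}$, I would begin by writing
\[
D_{S,x} f = \sum_{T \subseteq S} (-1)^{|T|} [E_T f]_{S \to x_S},
\]
so that by Minkowski's inequality it suffices to upper bound each of the $2^{|S|}$ terms by $r^{|S|} \gamma$; then the sum is at most $2^{|S|} r^{|S|} \gamma = (2r)^{|S|} \gamma$.

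The core calculation is to bound $\|[E_T f]_{S \to x_S}\|_p$ for a fixed $T \subseteq S$. Since $E_T f$ does not depend on the coordinates in $T$, fixing those coordinates in the restriction is immaterial, and for $y \in \Omega^{[n]\setminus S}$ we have
\[
[E_T f]_{S \to x_S}(y) \;=\; \EE_{z \sim \mu^T}\bigl[ f_{S \to (x_{S\setminus T},\, z)}(y) \bigr],
\]
where $(x_{S\setminus T},z)$ denotes the element of $\Omega^S$ whose $S\setminus T$ coordinates are $x_{S \setminus T}$ and whose $T$ coordinates are $z$. By Minkowski's integral inequality (applied to the average over $z$),
\[
\bigl\| [E_T f]_{S \to x_S} \bigr\|_p \;\le\; \EE_{z \sim \mu^T} \bigl\| f_{S \to (x_{S\setminus T},\, z)} \bigr\|_p \;\le\; r^{|S|} \gamma,
\]
where the last inequality uses the hypothesis on restrictions of $f$ to the set $S$ (of size $|S| \le d$).

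Combining these two steps gives $\|D_{S,x} f\|_p \le (2r)^{|S|}\gamma$ for every $S$ of size $\le d$ and every $x \in \Omega^S$, which is exactly the $(2r,\gamma,d)$-$L_p$-globalness of $f$. There is no real obstacle here: the only thing to be careful about is the observation that $[E_T f]_{S \to x_S}$ equals an average of honest restrictions of $f$ on all of $S$ (with the $T$-coordinates resampled), so that the hypothesis on restrictions of size $|S|$ can be applied directly, and that Minkowski's inequality is what moves the $L_p$ norm inside the average over $z$.
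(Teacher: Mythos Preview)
Your proof is correct and follows essentially the same route as the paper's: expand $L_S = \sum_{T\subseteq S}(-1)^{|T|}E_T$, restrict, apply the triangle inequality, and bound each of the $2^{|S|}$ terms by $r^{|S|}\gamma$. The only cosmetic difference is that the paper phrases the bound on $\|[E_T f]_{S\to x}\|_p$ via Jensen's inequality and the restriction $f_{S\setminus T\to x_{S\setminus T}}$ (a restriction of size $|S\setminus T|$), whereas you use Minkowski's integral inequality and view it as an average of full restrictions $f_{S\to(x_{S\setminus T},z)}$ of size $|S|$; your formulation applies the hypothesis at size exactly $|S|$ and so does not implicitly rely on $r\ge 1$.
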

    \begin{proof}
        Let $S \subset [n]$, $|S| \leq d$. As was shown in Section~\ref{sec: background}, the Laplacian $L_S[f]$ satisfies 
        \[
        L_S[f]=\sum_{T\subseteq S}(-1)^{|T|} E_T[f].
        \]
        By the triangle inequality, this implies $\lVert D_{S,x} f\rVert_p\le \sum_{T\subseteq S}\lVert [E_{T}[f]]_{S\to x}\rVert_p.$ For each $T \subset S$, we obtain by Jensen's inequality that 
        \[
        \lVert [E_{T}[f]]_{S\to x}\rVert_p^p = \EE_{ z\sim \Omega^{S^c}} \left[\left|\EE_{y\sim \Omega^T} f_{S\setminus T \to x_{S\setminus T}, S^c\to z}\right|^p\right] \le \lVert  f_{S\setminus T \to x_{S\setminus T}}\rVert_p^p\le r^{p|S|}\gamma^p. 
        \]
        Hence,
        \[
        \lVert D_{S,x} f\rVert_p\le \sum_{T\subseteq S}\lVert [E_{T}[f]]_{S\to x}\rVert_p \leq \sum_{T\subseteq S} r^{|S|}\gamma = (2r)^{|S|}\gamma,
        \]
        as asserted.
    \end{proof}

    This allows us to obtain our main hypercontractivity theorem (Theorem~\ref{cor:hypercontractivity for global functions - intro}) from Corollary~\ref{cor:hypercontractivity for global functions - alternative}.
    We prove a stronger formulation with an additional parameter $\gamma$, which reduces to the statement of Theorem~\ref{cor:hypercontractivity for global functions - intro} by substituting $\gamma=\lVert f\rVert_2$.
    


\begin{thm}
\label{thm: strong hypercontractivity for global functions}
 Let $q \ge 2$, $\gamma>0$, and let $(\Omega,\mu)$ be a finite probability space. Let $f\colon (\Omega^n, \mu^n) \to \mathbb{R}$, and assume that \[\lVert f_{S \to x}\rVert_2 \leq r^{|S|} \gamma\] for all $S \subseteq [n]$ and for all $x \in \Omega^S$. If $r \geq 1$ and $\rho \leq \frac{\log q}{32 r q}$,
 then 
 \[
 \lVert T_{\rho}f\rVert_q^q \le \gamma^{q-2} \lVert f\rVert_2^2.
 \]
\end{thm}

    \begin{proof}
        By Lemma~\ref{lem:restriction global implies global}, $f$ is $(2r, \gamma)$-$L_2$-global. Hence, the assertion of the theorem follows from Corollary~\ref{cor:hypercontractivity for global functions - alternative}.
    \end{proof}

    \medskip The following proposition shows that Theorem~\ref{cor:hypercontractivity for global functions - intro} is tight, up to the value of the constant $C$.
    \begin{proposition}
        \label{lem: global hyper sharpness}
        There exist $r,C>1$ such that for any $q \geq 2$, there exist $n,p$ and a function $f \colon (\{0,1\}^n,\mu_p) \to \mathbb{R}$, such that
        
        \medskip \text{(a)}: \quad $\lVert f_{S \to x}\rVert_2 \leq r^{|S|} \lVert f\rVert_2$ for all $S \subset [n]$, and for all $x \in \bool^S$, and
        
        \medskip \text{(b)}: \quad $\lVert T_{\rho}f\rVert_q > \lVert f\rVert_2$, for all $\rho > \frac{C\log q}{r q}$.
    \end{proposition}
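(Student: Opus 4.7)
The plan is to analyze the function exhibited in the paragraph preceding the proposition,
\[
f=\prod_{j=0}^{d-1}\Sigma_j,\qquad \Sigma_j:=\sum_{i=1}^{n/d}\bigl(x_{j(n/d)+i}-p\bigr),
\]
with $p=d/n$. Taking $d=1$ already suffices, but the argument works for any fixed $d\ge 1$; for each $q\ge 2$ we will choose $n=n(q)$ sufficiently large. Since $x_i-p=\sqrt{p(1-p)}\,\chi_i$, each $\Sigma_j$ is a pure level-$1$ Fourier polynomial, and because the $d$ blocks use disjoint variables, $f$ is a pure level-$d$ function. Hence $T_\rho f=\rho^d f$ and in particular $\|T_\rho f\|_q=\rho^d\|f\|_q$.

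Next I would compute the two relevant norms. By independence of the $\Sigma_j$, $\|f\|_2^2=(1-d/n)^d=\Theta(1)$. Each $\Sigma_j+1$ is $\mathrm{Binomial}(n/d,d/n)$, which converges in distribution and in every fixed $q$-th moment to $\mathrm{Poi}(1)$ as $n\to\infty$ (uniform integrability is immediate). Combined with the Moser--Wyman asymptotic for the Bell numbers, which yields $\|Y\|_q=\Theta(q/\log q)$ for $Y\sim\mathrm{Poi}(1)$ (using monotonicity of $L^q$-norms to reduce to integer $q$), this gives $\|f\|_q\ge c^d(q/\log q)^d$ for an absolute constant $c>0$, provided $n=n(q)$ is chosen large enough.

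For condition~(a), I would set $k_j:=|S\cap\{j(n/d)+1,\dots,(j+1)(n/d)\}|$, so $\sum_j k_j=|S|$. The restricted sum $(\Sigma_j)_{S\to x}$ decomposes as a constant $c_j$ with $|c_j|\le k_j$ plus a centered binomial of $L_2$-norm at most $1$; hence $\|(\Sigma_j)_{S\to x}\|_2^2\le k_j^2+1\le 4^{k_j}$. Multiplying over $j$ (using independence of the blocks) gives $\|f_{S\to x}\|_2\le 2^{|S|}$, and combining with $\|f\|_2\ge 1/2$ (for $n$ large) yields $\|f_{S\to x}\|_2\le r^{|S|}\|f\|_2$ for an absolute constant $r$ (say $r=4$).

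For condition~(b), the estimates above give $\|T_\rho f\|_q/\|f\|_2\ge c'(\rho q/\log q)^d$, which exceeds $1$ whenever $\rho>C_0\log q/q$ for some absolute $C_0$. Picking $C>rC_0$ then delivers~(b) for $q$ above some threshold $q_0$; for $2\le q<q_0$ we enlarge $C$ so that $C\log q/(rq)\ge 1$ (possible since $(\log q)/q\le 1/e$ for $q\ge 2$), whereupon~(b) holds vacuously. The main obstacle is the quantitative control of the Binomial-to-Poisson moment convergence together with the Moser--Wyman lower bound: one needs, for each $q$, an explicit choice of $n(q)$ such that $\|f\|_q\ge c(q/\log q)^d$ with $c$ independent of $q$, so that a single absolute constant $C$ works uniformly in $q$.
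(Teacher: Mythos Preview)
Your proposal is correct and follows essentially the same approach as the paper: the same product-of-shifted-sums function, the same block-wise restriction computation for~(a) (the paper gets $r=2$, you get $r=4$, both fine), and the same Poisson-limit argument for~(b). One tiny slip: your parenthetical ``possible since $(\log q)/q\le 1/e$'' points the wrong way --- to make $C\log q/(rq)\ge 1$ you need $q/\log q$ bounded above, which holds on the compact interval $[2,q_0]$ --- but the conclusion is correct, and in fact the paper's own treatment of small $q$ is terser than yours.
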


    \begin{proof}
          For $n,d \in \mathbb{N}$ such that $d|n$, let $p=d/n$ and consider the function $f_{n,d} \colon (\{0,1\}^n, \mu_{p}) \to \mathbb{R}$ defined by
            \[f_{n,d}= \prod_{j=0}^{d-1}\left(\sum_{i=1}^{n/d} (x_{j(n/d)+i}-p) \right).\] 
          For any $n,d$, the function $f_{n,d}$ satisfies~(a) with $r=2$. Indeed, if the set $S$ of `restricted' coordinates is included in a single expression of the form $f_{n,d,j}=\sum_{i=1}^{n/d} (x_{j(n/d)+i}-p)$, the 2-norm increases the most when all the coordinates are fixed to $1$, and in this case we have
            \[
                \lVert  (f_{n,d,j})_{S \rightarrow 1} \rVert_2^2 = (|S| (1-p))^2 + (\frac{n}{d} - |S|) (p(1-p)) \le |S|^2 + 1,
            \]
          and $\lVert  f_{n,d,j}\rVert_2 = \sqrt{1-p}$. As $\sqrt{\frac{|S|^2 + 1}{1-p}} \leq 2^{|S|}$ for all $|S| \geq 1$, condition~(a) is satisfied for any such $S$. The condition for an arbitrary set $S$ follows by multiplicativity.

          When $d$ is fixed and $n$ tends to infinity, the distribution of $f_{n,d}$ converges to the distribution of $\prod_{i=1}^{d}(X_i-1),$ where each $X_i$ is independently $\mathrm{Poi}(1)$ distributed. This easily implies that when $q$ is large and fixed and $n \to \infty$, the $q$-norm of $f_{n,d}$ tends to $\left(\Theta(\frac{q}{\log q})\right)^d$, while its $2$-norm is $(1-p)^{d/2}.$ Moreover, for any $\rho>0$, $f_{n,d}$ is an eigenfunction of $T_\rho$ corresponding to the eigenvalue $\rho^d$. Hence, there exist $C'>0$ and $n_0=n_0(q,d)$, such that $\lVert T_{\frac{C'\log q}{q}}f_{n,d}\rVert_q>\lVert f_{n,d}\rVert_2$, for all $n>n_0$. In particular, condition~(b) holds for $f_{n,d}$, for all $C\geq 2C'$. Therefore, the function $f=f_{n,d}$ for any $n>n_0$ satisfies the assertion of the proposition, with $r=2$ and $C=2C'$.
     \end{proof}

\section{Sharp Level-$d$ Inequalities for Global Functions}
\label{sec:level-d}

In this section we prove our level-$d$ inequality -- namely, Theorem~\ref{thm:level d intro}, which asserts a sharp bound on the 2-norm of $f^{=d}$ for `global' functions $f \colon \Omega^n \to \{0,1\}$. Here, the `globalness' notion concerns \emph{restrictions} of $f$ -- it asserts that  there exist $r,\gamma$ such that for any $S \subset [n]$ with $|S| \leq d$, $\EE[f_{S \to x}] \le r^{|S|}\gamma$. 

First, we prove a sharp level-$d$ inequality under the assumption that $f^{=d}$ is global. Then, we pass via a derivative-based notion of globalness -- namely, the assumption that $\lVert D_{S,x} f\rVert_p \leq r^{|S|}\gamma_p$ for $p=1,2$, to show that globalness of $f$ can be upgraded to globalness of $f^{=d}$, thus allowing us to prove the theorem.  

Throughout this section, we use the convention $(\cdot)^0=1$ for any expression $(\cdot)$, including expressions of the form $\frac{r}{0}$ and $\log 1$.

\subsection{A level-$d$ inequality when $f^{=d}$ is global}

\begin{lem}\label{lem:level d given globalness}
    Let $\gamma>0,\gamma_2>\gamma_1>0,$ let $f\colon \Omega^n\to \mathbb{R}$ be with $\lVert f\rVert_2 \le \gamma_2$ and $\lVert f\rVert_1\le \gamma_1$, let  $d<\frac{1}{2}\log\left(\frac{\gamma_2}{\gamma_1}\right)$, and let $r\ge\left(\frac{d}{\log(\gamma_2/\gamma_1)}\right)^{1/2}$ .
    If the function $f^{=d}$ is $(r,\gamma)$-$L_2$-global,  then 
    \[ \lVert f^{=d}\rVert_2^2\le \left(\frac{33r}{d}\right)^d \gamma_1 \cdot \gamma\log^{d}\left(\frac{\gamma_2}{\gamma_1}\right). \]
\end{lem}
\begin{proof}
    The assertion clearly holds for $d=0$. Hence, we may assume $d \geq 1$.    
    Let $q>2,q'$ be H\"{o}lder conjugates to be chosen later. Then by H\"{o}lder's inequality we have 
    \begin{align*}
        \lVert f^{= d}\rVert_2^2 = \langle f, f^{= d }\rangle \le \lVert f^{= d}\rVert_{q}\lVert f\rVert_{q'}.
    \end{align*}
    Let $\rho = \frac{1}{3 \sqrt{2}} \min\left(\frac{1}{r^{\frac{q-2}{q}}q},\frac{1}{\sqrt{q}}\right)$. By Corollary~\ref{cor:hypercontractivity for global functions - alternative}, we have 
    \[
       \lVert f^{= d}\rVert_q = \rho^{-d}\lVert \mathrm{T}_{\rho} f^{=d}\rVert_q \le  \rho^{-d}\lVert f^{=d}\rVert_2^{2/q}\gamma^{1-2/q} \le \rho^{-d}\gamma.      
 \]
Let $\theta=\frac{2}{q}$. Since $\frac{1}{q'}= 1-\frac{1}{q}=\frac{1-\theta}{1}+\frac{\theta}{2},$ by the log-convexity of $L_p$-norms we have   
    \[ 
    \lVert f\rVert_{q'} \le \lVert f\rVert_1^{1-\theta}\lVert f\rVert_2^{\theta}\le \gamma_1 \left(\frac{\gamma_2}{\gamma_1}\right)^\theta.
    \]
Combining with the above bounds, we obtain
    \[
    \lVert f^{= d}\rVert_2^2 \leq \lVert f^{= d}\rVert_{q}\lVert f\rVert_{q'} \leq 
    \rho^{-d}\gamma \cdot \gamma_1 \left(\frac{\gamma_2}{\gamma_1}\right)^\theta.
    \]
In order to optimize the right hand side, we choose $q=2\log(\gamma_2/\gamma_1)/d$, thus obtaining $\theta=d/\log(\gamma_2/\gamma_1)$, which yields 
\begin{equation}\label{Eq:d-level-Aux1}
    \lVert f^{= d}\rVert_2^2 \leq  
    \rho^{-d}\gamma \cdot \gamma_1 \left(\frac{\gamma_2}{\gamma_1}\right)^\theta = (e/\rho)^{d}\gamma_1\cdot \gamma. 
    \end{equation}
Note by the assumptions on $r$ and $d$, we have $q>4$ and $r>1/\sqrt{q}$. It follows that 
\[\frac{1}{3\sqrt{2}r^{\frac{q-2}{q}}q} = \frac{1}{3\sqrt{2}rq} \cdot r^{2/q}>\frac{1}{6rq}
\]
(since $r^{2/q} > (1/q)^{1/q}$, and for any $x<1/4$ we have $x^x > 1/\sqrt{2}$),
and 
\[\frac{1}{3\sqrt{2q}}>\frac{1}{6r q}, \]
and therefore, $\rho \ge \frac{1}{6rq} = \theta/(12r)$. Substituting into~\eqref{Eq:d-level-Aux1}, we obtain 
\[
\lVert f^{=d}\rVert_2^2 \le \left(\frac{e \cdot 12 r}{\theta} \right)^{d}\gamma_1\cdot \gamma \leq \left(\frac{33r}{d}\right)^d \gamma_1 \cdot \gamma \cdot \log^{d}\left(\frac{\gamma_2}{\gamma_1}\right), 
\]
as asserted.
\end{proof}

\subsection{Obtaining globalness of $f^{=d}$}

The following theorem shows that `derivative-based' globalness of $f$ can be used to obtain globalness of $f^{=d}$. In view of Lemma~\ref{lem:restriction global implies global}, it will be sufficient for proving Theorem~\ref{thm:level d intro}. 
\begin{thm}\label{thm: level d is global}
     Let $r\ge 1, \gamma_2>\gamma_1>0$, and let $d\le \frac{1}{2}\log(\frac{\gamma_2}{\gamma_1})$ be a non-negative integer. Let $f \colon \Omega^n \to \mathbb{R}$ be a function that is both  $(r,\gamma_1,d)$-$L_1$-global and $(r,\gamma_2,d)$-$L_2$-global. Let
    \[
        r'_d = \frac{\sqrt{d}}{\log^{1/2}(\gamma_2/\gamma_1)} \qquad \mbox{and} \qquad \gamma'_d =  \left(\frac{33 r}{\sqrt{d}}\right)^d \gamma_1 \log^{d/2}\left(\frac{\gamma_2}{\gamma_1}\right).
    \]
    Then the function $f^{=d}$ is $(r'_d,\gamma'_d)$-$L_2$-global.
\end{thm}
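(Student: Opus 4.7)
The plan is to induct on $d$; the base case $d=0$ is immediate, since $f^{=0}=\mathbb{E}[f]$ is constant and $|\mathbb{E}[f]|\le\|f\|_1\le\gamma_1=\gamma'_0$. The key technical identity, which follows from the fact that for $T\supseteq S$ the restriction $[f^{=T}]_{S\to x}$ is a ``pure $(T\setminus S)$''-function on $\Omega^{[n]\setminus S}$, is
\[
D_{S,x}f^{=d}=(D_{S,x}f)^{=d-|S|},
\]
where the right-hand projection is the Efron--Stein component on the factor $\Omega^{[n]\setminus S}$. This reduces the task of bounding $\|D_{S,x}f^{=d}\|_2$ to bounding the level-$(d-|S|)$ mass of $g:=D_{S,x}f$.

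For $|S|\ge 1$, set $d':=d-|S|<d$. Since $D_{T,y}g=D_{S\cup T,\,x\cup y}f$ for every $T\subseteq[n]\setminus S$, the function $g$ on $\Omega^{[n]\setminus S}$ inherits all the hypotheses of the theorem with $d'$, $r^{|S|}\gamma_1$, and $r^{|S|}\gamma_2$ in place of $d$, $\gamma_1$, and $\gamma_2$ (in particular $d'\le\frac{1}{2}\log(\gamma_2/\gamma_1)$ and $r\ge 1$ are preserved). Applying the inductive hypothesis to $g$ gives
\[
\|D_{S,x}f^{=d}\|_2=\|g^{=d'}\|_2\le\left(\frac{33r}{\sqrt{d'}}\right)^{d'}r^{|S|}\gamma_1\log^{d'/2}(\gamma_2/\gamma_1),
\]
and a direct computation shows this quantity is bounded by $(r'_d)^{|S|}\gamma'_d$ as soon as $(d/d')^{d'/2}\le 33^{|S|}$. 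But $(1+|S|/d')^{d'/2}\le e^{|S|/2}\le 33^{|S|}$ (with the edge case $d'=0$ covered by the convention $0^0=1$), so the desired bound holds.

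To handle $|S|=0$---equivalently, to bound $\|f^{=d}\|_2$---set $\gamma^*:=\max\{\gamma'_d,\,\|f^{=d}\|_2\}$. The previous step shows that $f^{=d}$ is $(r'_d,\gamma^*)$-$L_2$-global. Since $r'_d=(d/\log(\gamma_2/\gamma_1))^{1/2}$ and $d\le\frac{1}{2}\log(\gamma_2/\gamma_1)$, Lemma~\ref{lem:level d given globalness} applies to $f$ with $r=r'_d$ and $\gamma=\gamma^*$ and, after simplification, yields
\[
\|f^{=d}\|_2^{2}\le\left(\frac{33\,r'_d}{d}\right)^{d}\gamma_1\gamma^*\log^{d}(\gamma_2/\gamma_1)=\frac{\gamma'_d\,\gamma^*}{r^d}.
\]
Because $r\ge 1$, this forces $\|f^{=d}\|_2\le\gamma'_d$ in both sub-cases ($\gamma^*=\gamma'_d$ and $\gamma^*=\|f^{=d}\|_2$), closing the induction.

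The main technical obstacle is the constant chase: one must verify that the factor $33$ in the statement is large enough to absorb the $e^{|S|/2}$ loss produced by $(1+|S|/d')^{d'/2}$ in the inductive step, and that the self-referential application of Lemma~\ref{lem:level d given globalness} with $\gamma^*=\max(\gamma'_d,\|f^{=d}\|_2)$ closes cleanly thanks to the assumption $r\ge 1$.
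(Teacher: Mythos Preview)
Your proof is correct and follows essentially the same approach as the paper: induction on $d$ using the identity $D_{S,x}f^{=d}=(D_{S,x}f)^{=d-|S|}$, followed by Lemma~\ref{lem:level d given globalness} to close the $|S|=0$ case. The only cosmetic differences are that the paper applies the inductive hypothesis just at $|S|=1$ (then bootstraps to larger $S$ via the resulting globalness of $D_{i,x}f^{=d}$), and phrases the final step as a contradiction rather than via $\gamma^*=\max\{\gamma'_d,\|f^{=d}\|_2\}$; your direct strong-induction treatment of all $|S|\ge1$ and the $\gamma^*$ trick are clean equivalents.
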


\begin{proof}
    We prove the theorem by induction on $d$. For $d=0$, the theorem asserts that $f^{=0}$ is $(0,\gamma_1)$-$L_2$-global, and this indeed holds since $f^{=0}=\EE[f]$. 
    
    To prove the induction step, we show that each function $D_{i,x}[f^{=d}]$ is $(r'_d,r'_d\gamma'_d)$-$L_2$ global and that $\lVert f^{=d}\rVert_2\le \gamma'_d.$ This will complete the proof, as each derivative $D_{S,x}[f^{=d}]$ is also a derivative of the function $D_{i,x}[f^{=d}]$ (for any $i \in S$).
    We first show the former claim. 
    
    Each function $D_{i,x}[f]$ is both  $(r,r\gamma_1,d-1)$-$L_1$-global and $(r,r\gamma_2,d-1)$-$L_2$-global. Therefore, we may apply the induction hypothesis to obtain that the function \[D_{i,x}[f^{=d}]= \left(D_{i,x}[f]\right)^{= d - 1}\] is $(r''_{d-1}, \gamma''_{d-1},d-1)$-$L_2$-global, where 
    \[
    r''_{d-1}:=\sqrt{\frac{d-1}{d}}r'_d, \qquad \mbox{and} \qquad
    \gamma''_{d-1}:= \left(\frac{33 r}{\sqrt{d-1}}\right)^{d-1} r\gamma_1\log^{(d-1)/2}\left(\frac{\gamma_2}{\gamma_1}\right).  
    \]
    As we clearly have $r''_{d-1} \le r'_d$, and as 
\[    \gamma''_{d-1} \le  \left(\frac{33r}{\sqrt{d}}\right)^d r'_d \gamma_1 \log^{d/2}\left(\frac{\gamma_2}{\gamma_1}\right) = r'_d\gamma'_d, \]
    $D_{i,x}[f^{=d}]$ is indeed $(r'_d,r'_d\gamma'_d)$-$L_2$ global. (In the case $d=1$, the function $D_{i,x}[f^{=1}]=(D_{i,x}[f])^{=0}=\mathbb{E}[D_{i,x}(f)]$ is $(0,r\gamma_1)$-$L_2$-global, and in particular, is $(r'_1,r'_1\gamma'_1)$-$L_2$ global.)   

It now remains to show that $\lVert f^{=d}\rVert_2 \le \gamma'_d.$ Suppose on the contrary that $\lVert f^{=d}\rVert_2>\gamma'_d$. By the above paragraph, it follows that $f^{=d}$ is $(r'_d,\lVert f^{=d}\rVert_2)$-global. Therefore, by Lemma \ref{lem:level d given globalness} we have 
\[
\lVert f^{=d}\rVert_2^2\le \left(\frac{33 r'_d}{d}\right)^d \gamma_1 \lVert f^{=d}\rVert_2 \log^d\left(\frac{\gamma_2}{\gamma_1}\right).
\]
By substituting the values of $r'_d$ and $\gamma'_d$, this yields
\[
\left(\frac{33 r}{\sqrt{d}}\right)^d \gamma_1 \log^{d/2}\left(\frac{\gamma_2}{\gamma_1}\right) = \gamma'_d < \lVert f^{=d}\rVert_2 \leq \left(\frac{33\sqrt{d}}{d \log^{1/2}\left(\frac{\gamma_2}{\gamma_1}\right)}\right)^{d} \gamma_1 \log^{d}\left(\frac{\gamma_2}{\gamma_1}\right),
\]
a contradiction. This completes the proof.
\end{proof}

\subsection{Proof of Theorem \ref{thm:level d intro}}

We prove the following concrete version of Theorem \ref{thm:level d intro}. 

\begin{thm}\label{thm:level-d-concrete}
Let $f \colon  \Omega^n \rightarrow \{0, 1\}$. Let $r > 1$, let $d\le  \frac{1}{4}\log(1/\mathbb{E}[f])$, and suppose that 
        \[\mathbb{E}[f_{S\to x}] \le r^{|S|}  \mathbb{E}[f],\] for all sets $S$ of size $\le d$ and all $x\in \Omega^S.$ Then 
        \[ \lVert  f^{=d}\rVert_2^2 \leq \mathbb{E}^2[f] \left(\frac{2200 r^2 \log(1/\mathbb{E}[f])}{d}\right)^d.
        \]
\end{thm}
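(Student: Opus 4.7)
The plan is to feed Theorem~\ref{thm: level d is global} with well-chosen parameters and read off the result at $S=\emptyset$. The key observation is that Boolean-valuedness gives us \emph{two} norm bounds from a single restriction hypothesis: for every $S$ with $|S|\le d$ and every $x\in\Omega^S$,
\[
\|f_{S\to x}\|_1 = \mathbb{E}[f_{S\to x}] \le r^{|S|}\mathbb{E}[f], \qquad \|f_{S\to x}\|_2^2 = \mathbb{E}[f_{S\to x}] \le r^{|S|}\mathbb{E}[f],
\]
so $\|f_{S\to x}\|_2 \le r^{|S|/2}\sqrt{\mathbb{E}[f]} \le r^{|S|}\sqrt{\mathbb{E}[f]}$ (using $r>1$). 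This naturally suggests the choice $\gamma_1 = \mathbb{E}[f]$ and $\gamma_2 = \sqrt{\mathbb{E}[f]}$, for which the hypothesis $d\le\tfrac14\log(1/\mathbb{E}[f]) = \tfrac12\log(\gamma_2/\gamma_1)$ is exactly what Theorem~\ref{thm: level d is global} demands.

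The execution proceeds in three short steps. First, apply Lemma~\ref{lem:restriction global implies global} with the two norm bounds above to conclude that $f$ is both $(2r,\mathbb{E}[f],d)$-$L_1$-global and $(2r,\sqrt{\mathbb{E}[f]},d)$-$L_2$-global. Second, apply Theorem~\ref{thm: level d is global} with globalness parameter $2r$, $\gamma_1=\mathbb{E}[f]$, $\gamma_2=\sqrt{\mathbb{E}[f]}$, to deduce that $f^{=d}$ is $(r'_d,\gamma'_d)$-$L_2$-global with
\[
\gamma'_d = \left(\frac{66\,r}{\sqrt{d}}\right)^{d} \mathbb{E}[f] \left(\tfrac12\log(1/\mathbb{E}[f])\right)^{d/2}.
\]
Third, specialize the globalness of $f^{=d}$ to $S=\emptyset$ (so that $D_{\emptyset,x}(f^{=d}) = f^{=d}$), yielding $\|f^{=d}\|_2 \le \gamma'_d$.

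Squaring gives $\|f^{=d}\|_2^2 \le \mathbb{E}^2[f]\left(\tfrac{66^2 r^2 \log(1/\mathbb{E}[f])}{2d}\right)^d = \mathbb{E}^2[f]\left(\tfrac{2178\,r^2\log(1/\mathbb{E}[f])}{d}\right)^d$, which is absorbed into the claimed constant $2200$. I do not anticipate a genuine obstacle: all the heavy lifting (hypercontractivity, the bootstrap from restriction-globalness to derivative-globalness, and the inductive promotion of globalness to the $d$-th level via Theorem~\ref{thm: level d is global}) has already been done in the preceding subsections, and the only thing to verify carefully is that the hypothesis $d\le\tfrac14\log(1/\mathbb{E}[f])$ matches $\tfrac12\log(\gamma_2/\gamma_1)$ under the chosen $\gamma_1,\gamma_2$, which it does.
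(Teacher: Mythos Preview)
Your proposal is correct and essentially identical to the paper's own proof. The paper also sets $\gamma_1=\mathbb{E}[f]$, $\gamma_2=\sqrt{\mathbb{E}[f]}$, invokes Lemma~\ref{lem:restriction global implies global} to obtain $(2r,\gamma_1,d)$-$L_1$-globalness and $(2\sqrt{r},\gamma_2,d)$-$L_2$-globalness (which, since $2\sqrt{r}\le 2r$, is the same as your $(2r,\gamma_2,d)$-$L_2$-globalness for the purpose of applying Theorem~\ref{thm: level d is global}), and then reads off $\|f^{=d}\|_2\le\gamma'_d$ with the same constant $66^2/2=2178\le 2200$.
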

\begin{proof}
    Denote $\gamma=\EE[f]$. Since the range of $f$ is $\{0,1\}$, Lemma \ref{lem:restriction global implies global} implies that $f$ is both $(2r,\gamma,d)$-$L_1$-global and $(2\sqrt{r},\sqrt{\gamma},d)$-$L_2$ global. By Theorem~\ref{thm: level d is global} we obtain that the function $f^{=d}$ is $(r',\gamma')$-global, for 
    \begin{equation}\label{Eq:r'gamma'}
    r' = \frac{\sqrt{d}}{\log^{1/2}(1/\sqrt{\gamma})} \qquad \mbox{and} \qquad \gamma'=  \left(\frac{33 \cdot (2r)}{\sqrt{d}}\right)^d \gamma \log^{d/2}\left(\frac{1}{\sqrt{\gamma}}\right).
    \end{equation}
    The theorem now follows from the fact that $\lVert f^{=d}\rVert_2 \le \gamma'.$
\end{proof}

More generally, we have the following. 
\begin{thm}\label{thm:level d for restriction global}
    \label{thm:non bool lvl_d}
        Let $f \colon  \Omega^n \to  \mathbb{R}$.
        Let $\gamma_2>\gamma_1 > 0$, $r > 1,$ and $ d\le  \frac{1}{2}\log(\gamma_2/\gamma_1)$, and suppose that for all sets $S \subset [n]$ with $|S|\leq d$ and for all $x \in \Omega^S$, we have
        \[\lVert f_{S\to x}\rVert_1 \le r^{|S|}  \gamma_1 \qquad \mbox{and} \qquad \lVert f_{S\to x}\rVert_2\le r^{|S|}\gamma_2.
        \]
        Then
        \[ \lVert  f^{=d} \rVert_2^2 \leq \gamma_1^2 \left(\frac{2200 r^2 \log(\gamma_2 /\gamma_1)}{d}\right)^d .
        \]
    \end{thm}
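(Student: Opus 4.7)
The plan is to give a direct reduction to Theorem~\ref{thm: level d is global}, mirroring almost verbatim the proof of the Boolean version (Theorem~\ref{thm:level-d-concrete}). The first step would be to apply Lemma~\ref{lem:restriction global implies global} twice, once with $p=1$ and once with $p=2$, in order to translate the restriction-based globalness hypotheses into the derivative-based notion required as input to Theorem~\ref{thm: level d is global}. This would yield that $f$ is simultaneously $(2r,\gamma_1,d)$-$L_1$-global and $(2r,\gamma_2,d)$-$L_2$-global; the factor $2$ is the only loss, coming from the sum $\sum_{T\subseteq S} 1 = 2^{|S|}$ in the triangle-inequality bound used in that lemma.

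Next, I would invoke Theorem~\ref{thm: level d is global} with $2r$ in place of $r$, using that $r>1$ forces $2r\ge 1$ and that the hypothesis $d\le \tfrac{1}{2}\log(\gamma_2/\gamma_1)$ transfers directly. The conclusion is that $f^{=d}$ is $(r'_d,\gamma'_d)$-$L_2$-global with
\[
    r'_d = \frac{\sqrt{d}}{\log^{1/2}(\gamma_2/\gamma_1)}, \qquad \gamma'_d = \left(\frac{66\,r}{\sqrt{d}}\right)^{d} \gamma_1 \log^{d/2}\!\left(\frac{\gamma_2}{\gamma_1}\right).
\]
Taking $S=\emptyset$ in the definition of $L_2$-globalness gives $\|f^{=d}\|_2\le \gamma'_d$. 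Squaring both sides then produces an inequality of the form
\[
    \|f^{=d}\|_2^2 \le \gamma_1^2\left(\frac{C\, r^2 \log(\gamma_2/\gamma_1)}{d}\right)^{d}
\]
for an absolute constant $C$, which is the asserted bound.

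Since all the technical work has already been carried out in Theorem~\ref{thm: level d is global}, no real obstacle remains; the proof is essentially a matter of applying the previously established lemmas in the right order. The only minor issue is constant tracking: naive bookkeeping gives $C = 66^2 = 4356$, whereas the statement asks for $C = 2200$. If this tighter constant is to be matched, one could either apply Lemma~\ref{lem:level d given globalness} directly with the intermediate parameter $r'_d$ (as is done inside the inductive proof of Theorem~\ref{thm: level d is global}), or carry out the two reductions (restriction $\to$ derivative globalness, and $f \to f^{=d}$) in a more integrated way so that the factor $2$ from Lemma~\ref{lem:restriction global implies global} is absorbed rather than squared. Either route is routine and does not affect the structure of the argument.
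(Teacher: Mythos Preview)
Your proposal is correct and follows exactly the paper's own proof: apply Lemma~\ref{lem:restriction global implies global} with $p=1$ and $p=2$ to get $(2r,\gamma_1,d)$-$L_1$-globalness and $(2r,\gamma_2,d)$-$L_2$-globalness, then invoke Theorem~\ref{thm: level d is global} and read off $\|f^{=d}\|_2\le\gamma'_d$. Your observation about the constant is accurate --- straightforward bookkeeping yields $66^2=4356$ rather than $2200$ (the $2200$ in Theorem~\ref{thm:level-d-concrete} comes from an extra factor $\tfrac12$ since there $\log(\gamma_2/\gamma_1)=\tfrac12\log(1/\gamma)$), and the paper's proof does not address this discrepancy either; it is a minor imprecision in the stated constant, not a gap in the argument.
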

\begin{proof}
    By Lemma \ref{lem:restriction global implies global}, $f$ is both $(2r,\gamma_1,d)$-$L_1$-global and $(2r,\gamma_2,d)$-$L_2$ global. The assertion now follows from Theorem \ref{thm: level d is global}, just like in the proof of Theorem~\ref{thm:level-d-concrete}.
\end{proof}

\subsection{Bounding the $q$-norm of $f^{=d}$}

We conclude this section with a bound on the $q$-norm of $f^{=d}$, which we will use in Sections~\ref{sec:globalization_and_cross_intersection},~\ref{sec:smeared_families}. 
\begin{proposition}
    \label{prop:glob_hyp}
        Let $r>1$ and let $f \colon  \Omega^n \rightarrow \bool$. Suppose that $\mathbb{E}[f_{S\to x}]\le r^{|S|}\gamma$ for all sets $S$ of size $\le d$ and for all $x\in \Omega^S.$ Then for any $q \ge  2$ and any integer $d$ we have
        \[ \lVert  f^{=d} \rVert_q \leq \gamma \left( 400 r \sqrt{q} \sqrt{\max \{ \log(1/\gamma), \, q \}} \right)^d. 
        \]
    \end{proposition}
    \begin{proof}
        The assertion clearly holds for $d=0$. Hence, we may assume $d \geq 1$. By Lemma~\ref{lem:restriction global implies global}, $f$ is $(2r,\sqrt{\gamma},d)$-$L_2$-global and $(2r,\gamma,d)$-$L_1$-global. We divide the proof into two cases.

        \medskip \noindent \emph{Case~1: $1 \le d\le \frac{1}{4}\log(1/\gamma)$.} In this case, by Theorem~\ref{thm: level d is global} we obtain that $f^{=d}$ is $(r',\gamma')$-global, for $(r',\gamma')$ defined in~\eqref{Eq:r'gamma'}. Let 
        $\rho = \frac{1}{3 \sqrt{2}} \min\left(\frac{1}{(r')^{\frac{q-2}{q}}q},\frac{1}{\sqrt{q}}\right)$. By Corollary~\ref{cor:hypercontractivity for global functions - alternative}(1),  
        \[
        \rho^{d} \lVert f^{=d}\rVert_q = \lVert T_{\rho} f^{=d}\rVert_q \le \gamma'^{\frac{q-2}{q}}\lVert f^{=d}\rVert_2^{2/q}\le \gamma'. 
        \]
        If $\rho = \frac{1}{3\sqrt{2} \sqrt{q}}$, then by substituting the value of $\gamma'$ we obtain 
        \[
        \lVert f^{=d}\rVert_q \leq \gamma' \rho^{-d} = \gamma \left( 99\cdot2 \, r \sqrt{q} \sqrt{\frac{\log(1 / \gamma)}{d}} \right)^d \leq \gamma \left( 200 r \sqrt{q} \cdot \max \left\{ \sqrt{q}, \sqrt{\log(1 / \gamma)} \right\} \right)^d,
        \]
        and the assertion follows. If $\rho = \frac{1}{3\sqrt{2} (r')^{\frac{q-2}{q}} q}$, then by substituting the values of $\gamma'$ and $r'$ we obtain 
        \[
        \lVert f^{=d}\rVert_q \leq \gamma' \rho^{-d} = \gamma \left( 99\cdot2 \, r \, r' q \, (r')^{-2/q} \sqrt{\frac{\log(1 / \gamma)}{d}} \right)^d = \gamma \left( 99\cdot2\cdot\sqrt{2} \, r \, q \, (r')^{-2/q} \right)^d.
        \]
        Thus, in order to prove the assertion, it is sufficient to show that
        \begin{equation}\label{Eq:Lev-d-q-norm1}
        \sqrt{q} (r')^{-2/q} \leq \sqrt{2} \max \left\{ \sqrt{q}, \sqrt{\log(1 / \gamma)} \right\}.
        \end{equation}
        Note that $\frac{2}{(r')^2}=\frac{\log(1/\gamma)}{d}$, and thus by our assumption on $d$, we have $4 \leq \frac{2}{(r')^2} \leq \log(1/\gamma)$. Now, if $q \ge \log(1/ \gamma) \ge \frac{2}{(r')^2}$, then $(r')^{-2/q} \leq \sqrt{(q/2)^{2/q}} < \sqrt{2}$, and~\eqref{Eq:Lev-d-q-norm1} follows. Hence, we may assume $q< \log(1/\gamma)$. In this case, we may write 
        \[
        \sqrt{q}(r')^{-2/q}=(q \cdot \left(\frac{1}{(r')^2}\right)^{2/q})^{1/2} \leq (q \cdot \left(\frac{\log(1/\gamma)}{2}\right)^{2/q})^{1/2}=(x_0 \cdot a_0^{2/x_0})^{1/2},
        \]
        where $a_0=\frac{\log(1/\gamma)}{2}>1$, and $x_0=q$ satisfies $2<x_0<\log(1/\gamma)$. As for any $a>1$, the function $g_a \colon x \mapsto x a^{2/x}$ is convex for all $x>0$, we have $x_0 (a_0)^{2/x_0} = g_{a_0}(x_0) \leq \max(g_{a_0}(2), g_{a_0}(\log(1/\gamma)))$, and thus, 
        \[
        \sqrt{q}(r')^{-2/q} \leq \max(2 \frac{\log(1/\gamma)}{2},\log(1/\gamma)\left(\frac{\log(1/\gamma)}{2}\right)^{2/\log(1/\gamma)})^{1/2}=\sqrt{\log(1/\gamma)}\sqrt{\max(1,2)}=\sqrt{2}\sqrt{\log(1/\gamma)},
        \]
        where the last inequality holds since $x^{1/x}<2$ for all $x$. Equation~\eqref{Eq:Lev-d-q-norm1} follows.

        \medskip \noindent \emph{Case~2: $d > \frac{1}{4}\log(1/\gamma)$.}
        To handle this case, we may use the fact that
        \[
        \lVert D_{S,x}[f^{=d}]\rVert_2 = \lVert  (D_{S,x}[f])^{= d-|S|} \rVert_2 \le \lVert D_{S,x}[f]\rVert_2,
        \]
        to obtain that $f^{=d}$ is $(2r,\sqrt{\gamma})$-$L_2$-global, since $f$ is $(2r,\sqrt{\gamma},d)$-$L_2$-global. 
        By Corollary~\ref{cor:hypercontractivity for global functions - alternative}(1), setting $\rho = \frac{1}{6 \sqrt{2} rq}$, we have 
        \[
        \rho^{d}\lVert f^{=d}\rVert_q = \lVert T_{\rho}f^{=d}\rVert_q\le \gamma^{\frac{q-2}{2q}} \lVert f^{=d}\rVert_2^{2/q}\le \sqrt{\gamma}.   
        \]
        By the assumption on $d$, this implies
        \[
        \lVert f^{=d}\rVert_q \le \sqrt{\gamma}(6 \sqrt{2}rq)^d\le \gamma (6 \sqrt{2} e^2 r q)^{d} \le \gamma \left( 64 r \sqrt{q} \sqrt{\max \{ \log(1/\gamma), \, q \}} \right)^d, 
        \]
        as asserted.
    \end{proof}

\section{Quantitative Bounds on the Size of Intersecting Families}
\label{sec:intersecting}

In this section we consider families of subsets of $[n]$, with respect to the biased measure $\mu_p$. A family $\mathcal{F} \subset \mathcal{P}([n])$ is called \emph{intersecting} if for any $A,B \in \mathcal{F}$, we have $A \cap B \neq \emptyset$. Two families $\mathcal{F},\mathcal{G} \subset \mathcal{P}([n])$ are called \emph{cross-intersecting} if $\forall S \in \mathcal{F}, T \in \mathcal{G}$, $S \cap T \neq \emptyset$. 

We identify subsets of $[n]$ with elements of $\{0,1\}^n$, and families $\mathcal{F}$ of subsets of $[n]$ with Boolean-valued functions $f=1_{\mathcal{F}} \colon (\{0,1\}^n,\mu_p) \to \{0,1\}$. Subsequently, we say that Boolean-valued functions are (cross-)intersecting if the corresponding families of subsets of $[n]$ are (cross-)intersecting. We use the notation $\mu_p(f)$ for $\mu_p(\mathcal{F})=\mathbb{E}_{\mu_p}(f)$, and say that $f$ is `large' (resp., `small') if $\mu_p(f)$ is `large' (resp., `small').

Throughout this section, when considering a function $f \colon (\{0,1\}^n,\mu_p) \to \{0,1\}$, we use the notations
\[
\alpha=\mu_p(f), \qquad \sigma^2 = \sum_{i=1}^n \hat f(\{i\})^2, \qquad \delta=\max_i |\hat f(\{i\})|, \qquad m=m(f)=|\{i \colon \hat f(\{i\})^2 \geq \frac{\delta^2}{2}\}|,
\]
where the Fourier coefficients are w.r.t.~the measure $\mu_p$. We say that $f$ has \emph{smeared level-$1$ coefficients} or that $f$ is a \textit{smeared level-$1$ function} if $m(f) \geq 1/p^2$.

\medskip \noindent The first result we prove in this section is the following concrete version of Theorem~\ref{theo:upper_bound_smeared_and_intersect-intro}.
\begin{thm}
\label{theo:upper_bound_smeared_and_intersect}
    Let $f \colon (\{0,1\}^n,\mu_p) \to \{0,1\}$ be intersecting, and assume that $1/\sqrt{m(f)} \leq p \leq 1/2$. Then
    \[
    \mu_p(f) \leq 32 \exp \left( -0.0001/p \right).
    \]
\end{thm}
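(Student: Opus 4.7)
My plan is to combine a Fourier identity coming from the intersecting condition with the sharp hypercontractive inequality of Corollary~\ref{cor:hypercontractivity for global functions - alternative} applied to the level-$1$ part of $f$, exploiting that the smeared hypothesis endows $f^{=1}$ with strong $L_2$-globalness with parameter $r=\sqrt{2p}$.

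First, I would couple two copies $X, Y \sim \mu_p$ so that $X_i Y_i = 0$ coordinate-wise: since $p \leq 1/2$, this is realized by giving $(X_i, Y_i) \in \{(0,0), (1,0), (0,1)\}$ probabilities $1-2p, p, p$. A direct computation shows $\mathbb{E}[\chi_i(X)\chi_j(Y)] = -\tau \cdot 1_{i=j}$ with $\tau := p/(1-p)$, hence $\mathbb{E}[\chi_S(X)\chi_T(Y)] = (-\tau)^{|S|} 1_{S=T}$ by independence across coordinates. Since $f$ is intersecting, $f(X)f(Y) \equiv 0$, so Parseval yields
\[
\sum_{S\subseteq [n]}(-\tau)^{|S|}\hat f(S)^2 = 0.
\]
Separating by parity, bounding the higher odd levels crudely by $\tau^3\sum_d \|f^{=d}\|_2^2 \leq \tau^3\alpha$, and using $\tau \leq 2p$, I obtain $\alpha^2 \leq \tau\sigma^2 + \tau^3\alpha \leq 2p\sigma^2 + 8p^3\alpha$. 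When $\alpha \leq 16 p^3$ the desired bound is trivial (since $16p^3 \leq 32 e^{-0.0001/p}$ for $p\leq 1/2$), so I may henceforth assume $\alpha^2/2 \leq \tau\sigma^2 \leq 2p\sigma^2$.

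Next, I would show that $f^{=1} = \sum_i \hat f(\{i\})\chi_i$ is $(\sqrt{2p},\sigma)$-$L_2$-global. All derivatives $D_{S,x}[f^{=1}]$ with $|S|\geq 2$ vanish since $f^{=1}$ is linear, and for $|S|=1$,
\[
\|D_{i,x}[f^{=1}]\|_2 = |\hat f(\{i\})\chi_i(x_i)| \leq \delta\sqrt{(1-p)/p}.
\]
The smeared hypothesis $m\geq 1/p^2$, combined with $m\delta^2/2 \leq \sigma^2$, gives $\delta \leq p\sigma\sqrt{2}$, so $\|D_{i,x}[f^{=1}]\|_2 \leq \sigma\sqrt{2p}$, establishing the claim. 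I then apply the first case of Corollary~\ref{cor:hypercontractivity for global functions - alternative} with $r=\sqrt{2p}$, $\gamma=\sigma$, and $q\leq 1/(2p)$, for which $\rho = 1/(3\sqrt{2q})$ is admissible; using $T_\rho f^{=1} = \rho f^{=1}$ by linearity, the corollary yields $\|f^{=1}\|_q \leq \sigma/\rho = 3\sqrt{2q}\,\sigma$.

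Finally, combining the two ingredients via H\"older,
\[
\sigma^2 = \langle f, f^{=1}\rangle \leq \|f\|_{q/(q-1)}\|f^{=1}\|_q \leq \alpha^{1-1/q}\cdot 3\sqrt{2q}\,\sigma,
\]
so $\sigma^2 \leq 18q\,\alpha^{2-2/q}$. If $\alpha \leq e^{-1/(2p)}$ the conclusion is immediate since $1/(2p) > 0.0001/p$; otherwise I take $q = \log(1/\alpha)\leq 1/(2p)$, obtaining $\sigma^2 \leq 18 e^2 \alpha^2 \log(1/\alpha)$. Substituting into $\alpha^2/2 \leq 2p\sigma^2$ yields $\log(1/\alpha) \geq 1/(72 e^2 p)$, hence $\alpha \leq \exp(-1/(72 e^2 p)) \leq 32\exp(-0.0001/p)$. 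The main technical delicacy is the interplay between the two regimes $\alpha \lessgtr e^{-1/(2p)}$, which governs whether the hypercontractive inequality can be applied at the optimal exponent $q = \log(1/\alpha)$; fortunately, the complementary regime is handled trivially by the inequality $e^{-1/(2p)} \leq 32 e^{-0.0001/p}$, which is where the generous constants in the theorem statement are absorbed.
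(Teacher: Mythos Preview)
Your strategy is pleasingly direct and genuinely different from the paper's: you combine the coupling identity $\sum_S(-\tau)^{|S|}\hat f(S)^2=0$ with a level-$1$ bound coming from the smeared hypothesis, whereas the paper runs a globalization-of-$f$ step plus a cross-intersecting argument (Propositions~\ref{prop:mes_cross_inter} and~\ref{prop:density-decrease}) that controls \emph{all} Fourier levels. Your derivation of the $(\sqrt{2p},\sigma)$-$L_2$-globalness of $f^{=1}$ is correct and is essentially Proposition~\ref{lem:spr_sy_lvl_1}.

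However, there is a real gap. The claim ``$16p^3\le 32e^{-0.0001/p}$ for $p\le 1/2$'' is false: any fixed power $p^3$ eventually dominates $e^{-c/p}$, and the inequality reverses around $p\approx 2\times 10^{-6}$. In that regime your case split leaves the window $\alpha\in(e^{-1/(2p)},16p^3]$ uncovered, and this cannot be patched by adjusting constants. The obstruction is intrinsic to a level-$1$-only approach: from the coupling identity you extract only $\alpha^2\le \tau\sigma^2+O(\tau^3\alpha)$, and the higher-order remainder $\tau^3\alpha\sim p^3\alpha$ swamps $\alpha^2$ exactly when $\alpha\lesssim p^3$, so no usable lower bound on $\sigma^2$ survives there. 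The paper escapes this by bounding $\|h^{=d}\|_q$ for all $d$ at once (Proposition~\ref{prop:glob_hyp}) and summing the resulting geometric series in Lemma~\ref{lem:cross_inner_prod}. There is also a smaller technical slip: taking $\rho=1/(3\sqrt{2q})$ in Corollary~\ref{cor:hypercontractivity for global functions - alternative} requires $r^{(q-2)/q}\le 1/\sqrt q$, which does not follow from $q\le 1/(2p)$ alone (try $p=0.1$, $q=4$); this one is repairable by using the other branch of the minimum, but the main gap above is not.
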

For any $p>1/\sqrt{n}$, the assertion of the theorem is tight, up to a factor of $O(\log(1/p)\log n)$ in the exponent. Indeed, let $T$ be the \emph{tribes} family with tribes of size $r \approx \frac{1}{p} \log n$ (formally, we write $[n]=T_1 \cup T_2 \cup \ldots \cup T_{n/r}$, where the $T_i$'s are disjoint sets of size $r$, and we set $T=\{S \subset [n] \colon \exists i, T_i \subset S\}$). Let $T'$ be the dual family (defined by $S \in T'$ if and only if $[n] \setminus S \not \in T$). The function $f=1_{T \cap T'}$ is intersecting, satisfies $m(f)=n \geq 1/p^2$, and computation shows that $\mu_p(f)=\exp(-O(\frac{1}{p} \log \frac{1}{p} \log n))$. Another example is the `longest-run-of-ones' function, discussed in~\cite{EKN17,MO12}, whose measure is of the same order of magnitude, for all values of $p>1/\sqrt{n}$ simultaneously.

Recall that Kupavskii and Zakharov~\cite{kupavskii2022spread} showed that for any $k<cn/\log n$, the maximum size of a regular intersecting family of $k$-element subsets of $[n]$ is $\exp(-\Omega \left( \frac{n}{k} \right) ) \binom{n}{k}$. As each such family corresponds to a function on $\{0,1\}^n$ all whose $1$-level Fourier coefficients are equal, we can apply to it Theorem~\ref{theo:upper_bound_smeared_and_intersect}. This yields a bound with the same order of magnitude in the exponent as in Kupavskii and Zakharov's result, up to a polynomial factor in $n, p$ which can be absorbed in the exponent. Hence, Theorem~\ref{theo:upper_bound_smeared_and_intersect} implies the result of~\cite{kupavskii2022spread} for all $k>\sqrt{n}$, up to the value of the constants. It should be mentioned though that the constants in~\cite{kupavskii2022spread} are better than the constants we obtain. 

\medskip The proof of the theorem consists of three main steps which span Sections~\ref{sec:globalization_and_cross_intersection}--\ref{sec:intersecting-proof}. First, we show in Section~\ref{sec:globalization_and_cross_intersection} that a function which cross-intersects a `large' global function must be `small'. Then, we show in Section~\ref{sec:smeared_families} that a function with smeared level-$1$ coefficients that admits a measure-reducing restriction by fixing to zero a small number of coordinates, must be `small' as well. At the third step, presented in Section~\ref{sec:intersecting-proof}, we consider a function $f$ that satisfies the hypothesis of the theorem and assume on the contrary that it is `large'. We show that there exists a global restriction $h=f_{S \to x}$ such that $\mu_p(h) \geq e^{|S|}\mu_p(f)$ and $|S|$ is small. As the function $g=f_{S \to 0}$ cross-intersects the large global function $h$, the result of Section~\ref{sec:globalization_and_cross_intersection} implies that $g$ is `small'. However, this means that $f$ admits a measure-reducing restriction by fixing to zero a small number of coordinates, which contradicts the result of Section~\ref{sec:smeared_families}.   

\medskip The second result we  prove in this section is the following concrete version of Theorem~\ref{theo:vector_intersecting_upper_bound_intro}.
\begin{thm}
\label{theo:vector_intersecting_upper_bound}
    Let $n, k$ be natural numbers such that $2 \log n \leq k \leq \sqrt{n} \log n$. Let $\mathcal{A} \subseteq [k]^n$ be a transitive-symmetric vector-intersecting family. Then
    \[
        \frac{|\mathcal{A}|}{k^n} \leq 128 \exp \left( -\frac{0.0001k}{\log n} \right) .
    \]
    For $k > \sqrt{n} \log n$, we have $\frac{|\mathcal{A}|}{k^n} \leq 128 \exp \left( -0.0001\sqrt{n} \right)$.
\end{thm}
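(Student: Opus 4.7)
The plan is to reduce the vector-intersecting problem on $[k]^n$ to an intersecting problem on $\{0,1\}^{[n] \times [k]}$ endowed with the biased measure $\mu_p$, and then invoke Theorem~\ref{theo:upper_bound_smeared_and_intersect}. Identify each vector $a \in [k]^n$ with its transversal $\tilde a = \{(i, a_i) : i \in [n]\} \subseteq [n] \times [k]$, and define
\[
    \mathcal{F} = \{S \subseteq [n] \times [k] : S \supseteq \tilde a \text{ for some } a \in \mathcal{A}\}.
\]
Then $\mathcal{F}$ is intersecting: if $S_1 \supseteq \tilde a$ and $S_2 \supseteq \tilde b$ with $a, b \in \mathcal{A}$, then by vector-intersection $a$ and $b$ agree in some coordinate $i$, so $(i,a_i)=(i,b_i) \in S_1\cap S_2$.

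Next I would verify that $f=1_{\mathcal{F}}$ has smeared level-$1$ coefficients with respect to $\mu_p$. Since $\mathcal{A}$ is invariant under a transitive subgroup of $S_n$ acting on the coordinates, $\mathcal{F}$ is invariant under the induced action on $[n] \times [k]$ that permutes the first coordinate, forcing $\hat f(\{(i,j)\}) = \hat f(\{(i',j)\})$ for all $i,i' \in [n]$ and all $j \in [k]$. If $j^*$ maximises $|\hat f(\{(i,j)\})|$, then the $n$ coordinates $(1,j^*), \dots, (n,j^*)$ all realize this maximum, yielding $m(f) \geq n$. For $2\log n \leq k \leq \sqrt{n}\log n$ I would set $p = C\log n/k$ for a sufficiently large absolute constant $C$ (the target bound is vacuous unless $k$ is a large multiple of $\log n$, so we may assume $p \leq 1/2$); for $k>\sqrt{n}\log n$ I would take $p=C'/\sqrt{n}$. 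In both cases, $1/\sqrt{m(f)} \leq 1/\sqrt{n} \leq p \leq 1/2$, so the hypothesis of Theorem~\ref{theo:upper_bound_smeared_and_intersect} is satisfied.

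The heart of the argument is the lower bound $\mu_p(\mathcal{F}) \geq \alpha/2$, where $\alpha=|\mathcal{A}|/k^n$. Writing $B_i = \{i\}\times[k]$ and conditioning on the event $\mathcal{E}$ that $S\cap B_i\neq\emptyset$ for every $i$, define $y\in[k]^n$ by letting $y_i$ be a uniformly random element of $S\cap B_i$. Using the fact that, conditional on $|S\cap B_i| = s$, the set $S \cap B_i$ is a uniform $s$-subset of $B_i$, a short computation yields $\mathbb{P}(y_i = j \mid S\cap B_i \neq \emptyset) = 1/k$ for each $j \in [k]$, and independence of the blocks (preserved under conditioning on the product event $\mathcal{E}$) shows that $y$ is uniform on $[k]^n$ conditional on $\mathcal{E}$. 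Since $\tilde y \subseteq S$ by construction, $y\in\mathcal{A}$ implies $S \in \mathcal{F}$, hence $\mu_p(\mathcal{F}) \geq \alpha\,\mathbb{P}(\mathcal{E})$; a union bound with our choice of $p$ gives $\mathbb{P}(\mathcal{E}) \geq 1-n(1-p)^k \geq 1/2$.

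Combining, Theorem~\ref{theo:upper_bound_smeared_and_intersect} yields $\alpha/2 \leq \mu_p(\mathcal{F}) \leq 32\exp(-0.0001/p)$, and substituting the two choices of $p$ gives the claimed bounds after adjusting absolute constants. The main technical point I anticipate is the three-way balancing act performed by $p$: it must be large enough that $1/\sqrt{m(f)} \leq p$ and that $\mathbb{P}(\mathcal{E})$ stays bounded away from zero, yet as small as possible so that $\exp(-\Omega(1/p))$ matches the target. The split of the argument at $k = \sqrt{n}\log n$ is exactly what is needed to meet these three constraints simultaneously.
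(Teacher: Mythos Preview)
Your proof is correct and follows essentially the same strategy as the paper: the same up-closure $\mathcal{F}=\tilde{\mathcal{A}}^\uparrow$, the same coupling argument (sample $y_i$ uniformly from $S\cap B_i$) to lower-bound $\mu_p(\mathcal{F})$, the same use of transitive symmetry to show $m(f)\geq n$, and the same application of Theorem~\ref{theo:upper_bound_smeared_and_intersect}. The only difference is that the paper computes $\mathbb{P}(\mathcal{E})=(1-(1-p)^k)^n\geq 1/4$ exactly with $p=\log n/k$, rather than using your union bound $1-n(1-p)^k$; this lets the paper avoid the auxiliary constant $C$ and obtain the stated constant $0.0001$ in the exponent without adjustment.
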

For any $2 \log n \leq k \leq \sqrt{n} \log n$, the assertion of the theorem is tight, up to a factor of $O(\log k\log^2 n)$ in the exponent. To see this, one may take the aforementioned family $T \cap T' \subset \{0,1\}^n$, and define a family $\mathcal{A} \subset [k]^n$ by setting $x \in A$ if and only if there exists $y \in T \cap T'$ such that $\forall i \colon (y_i=1) \Rightarrow (x_i=1)$. The family $\mathcal{A}$ is vector-intersecting and transitive symmetric, and computation shows that $\frac{|\mathcal{A}|}{k^n}=\exp(-O(k\log k \log n))$. This construction follows a strategy suggested in~\cite[Section~4]{EKNS19}. 

The proof of the theorem, presented in Section~\ref{sec:vector_intersecting_families}, relies on an embedding of $\mathcal{A}$ into $((\{0,1\}^k)^n,\mu_p)$, for $p=\frac{\log n}{k}$, and on application of Theorem~\ref{theo:upper_bound_smeared_and_intersect}.

\subsection{A family that cross-intersects a large global family, is small}
\label{sec:globalization_and_cross_intersection}

In this section we show that a Boolean-valued function which cross-intersects a `large' global Boolean-valued function, must be `small'.
\begin{proposition}
    \label{prop:mes_cross_inter}
        Let $p \leq 1/2$, and let $g,h \in L_2(\bool^n, \mu_{p})$ be cross-intersecting Boolean-valued functions. Assume that for some $r>0$, we have $\mu_{p}(h_{S\to x}) \le r^{|S|}\mu_{p}(h)$ for all sets $S \subset [n]$  and for all $x\in \{0,1\}^S.$ Denote $c = \frac{1}{3200r}$. 
        If
        $\mu_{p}(h) > e^{-c/p}$, then $\mu_{p}(g) < 8 e^{-c/p}$.
    \end{proposition}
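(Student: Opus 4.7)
The plan is to exploit the cross-intersection hypothesis through a natural anti-correlated coupling that forces $g(x)h(y)=0$ deterministically; translate this into a spectral identity on the biased cube; and then control the resulting Fourier series using Proposition~\ref{prop:glob_hyp} applied to the global function $h$.

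First I note that the bound $\mu_p(g)<8e^{-c/p}$ is trivial whenever $c/p\le \log 8$, since $\mu_p(g)\le 1$ and $8e^{-c/p}>1$ in that range. So I may assume $c/p>\log 8$, and in particular set $q:=c/p\ge 2$. Because $p\le 1/2$, I can define a coupling $(x,y)$ on $\bool^n\times \bool^n$ whose coordinates are i.i.d.\ with $\Pr[(1,0)]=\Pr[(0,1)]=p$, $\Pr[(0,0)]=1-2p$ and $\Pr[(1,1)]=0$. Marginally $x,y\sim \mu_p$, but by construction $\mathrm{supp}(x)\cap\mathrm{supp}(y)=\emptyset$, so cross-intersection forces $\mathbb{E}[g(x)h(y)]=0$. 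A single-coordinate computation shows that the conditional expectation operator $Th(x):=\mathbb{E}[h(y)\mid x]$ satisfies $T\chi_i=-\tfrac{p}{1-p}\chi_i$; tensorizing gives $T=T_{-p/(1-p)}$.

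Expanding via the Efron--Stein decomposition,
\begin{equation*}
0=\langle g,Th\rangle = \mu_p(g)\mu_p(h)+\sum_{d\ge 1}\left(-\tfrac{p}{1-p}\right)^d \langle g^{=d},h^{=d}\rangle.
\end{equation*}
Isolating the $d=0$ term, taking absolute values, and applying H\"older with conjugate exponents $(q,q')$ together with the identity $\langle g^{=d},h^{=d}\rangle=\langle g,h^{=d}\rangle$, I obtain
\begin{equation*}
\mu_p(g)\mu_p(h)\le \mu_p(g)^{1/q'}\sum_{d\ge 1}\left(\tfrac{p}{1-p}\right)^d \|h^{=d}\|_q,
\end{equation*}
using $\|g\|_{q'}=\mu_p(g)^{1/q'}$ for the Boolean function $g$. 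The hypothesis $\mu_p(h_{S\to x})\le r^{|S|}\mu_p(h)$ together with $\mu_p(h)>e^{-c/p}=e^{-q}$ means Proposition~\ref{prop:glob_hyp} applies with $\log(1/\mu_p(h))\le q$, yielding $\|h^{=d}\|_q\le \mu_p(h)(400rq)^d$.

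Using $p/(1-p)\le 2p$, the bound becomes $\mu_p(g)\mu_p(h)\le \mu_p(g)^{1/q'}\mu_p(h)\sum_{d\ge 1}(800rpq)^d$. With $q=c/p=1/(3200rp)$ the geometric ratio is $800rpq=1/4$, so the sum is at most $1/3$. Dividing and using $1-1/q'=1/q$ gives $\mu_p(g)^{1/q}\le 1/3$, whence $\mu_p(g)\le 3^{-q}\le e^{-q}=e^{-c/p}<8e^{-c/p}$. The main obstacle in the above is the clean spectral identification $T=T_{-p/(1-p)}$ on the biased cube, which is what allows hypercontractivity to be brought to bear; the rest is a careful choice of $q$ ensuring that the geometric ratio $p\cdot 400rq/(1-p)$ falls comfortably below $1/e$.
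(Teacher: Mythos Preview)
Your proof is correct and follows essentially the same approach as the paper: both use the disjointness coupling (equivalently, Friedgut's operator with eigenvalues $(-p/(1-p))^{|S|}$) to get the spectral identity, apply H\"older, and invoke Proposition~\ref{prop:glob_hyp} to control $\|h^{=d}\|_q$. The only real difference is the choice of $q$: the paper sets $q=\max\{\log(1/\mu_p(g)),2\}$ and runs a short contradiction argument on the geometric ratio, whereas you fix $q=c/p$ from the outset and use the hypothesis $\mu_p(h)>e^{-q}$ to make $\max\{\log(1/\mu_p(h)),q\}=q$ in Proposition~\ref{prop:glob_hyp}, yielding the ratio $800rpq=1/4$ directly. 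Your route is a mild streamlining of the same argument.
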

In the proof of the proposition, we use a lemma which follows a strategy of Friedgut~\cite{F08}.
\begin{lem}
    \label{lem:cross_inner_prod}
        Let $p \leq 1/2$, and let $g,h \in L_2(\bool^n, \mu_{p})$ be cross-intersecting Boolean-valued functions. Then 
        \[ \mu_{p}(h) \mu_{p}(g) \leq \sum_{d=1}^n \left( \frac{p}{1-p} \right)^d | \langle h^{=d}, g \rangle |, \]
        where the inner product is taken with respect to the $\mu_p$ measure.
    \end{lem}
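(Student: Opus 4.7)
The plan is to follow Friedgut's coupling strategy. I would introduce an auxiliary correlated measure $\pi$ on $\{0,1\}^n \times \{0,1\}^n$ in which $x$ and $y$ are almost surely disjoint (as sets) but each marginal is $\mu_p$. Concretely, take the coordinates $(x_i,y_i)$ independently across $i$, with distribution
\[
\Pr[(x_i,y_i)=(0,0)]=1-2p,\quad \Pr[(x_i,y_i)=(1,0)]=\Pr[(x_i,y_i)=(0,1)]=p,\quad \Pr[(x_i,y_i)=(1,1)]=0.
\]
This is a legitimate probability distribution precisely because $p\leq 1/2$, and by inspection both marginals are $\mu_p$.

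Next I would compute correlations of characters under $\pi$. Since the coordinates are independent under $\pi$, and $\chi_i(x),\chi_i(y)$ are each centered, we have $\mathbb{E}_\pi[\chi_i(x)\chi_j(y)]=0$ for $i\neq j$, while a direct one-coordinate calculation gives $\mathbb{E}_\pi[\chi_i(x)\chi_i(y)]=\frac{0-p^2}{p(1-p)}=-\frac{p}{1-p}$. Tensorizing yields
\[
\mathbb{E}_\pi[\chi_S(x)\chi_T(y)] \;=\; \delta_{S,T}\Bigl(-\tfrac{p}{1-p}\Bigr)^{|S|}.
\]
Expanding $g$ and $h$ in their $p$-biased Fourier series and using this identity therefore gives
\[
\mathbb{E}_\pi[g(x)h(y)] \;=\; \sum_{S\subseteq[n]}\Bigl(-\tfrac{p}{1-p}\Bigr)^{|S|}\widehat g(S)\widehat h(S).
\]

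The key observation is that under $\pi$ the pair $(x,y)$ is always disjoint, and the cross-intersecting assumption on $g,h$ forces $g(x)h(y)=0$ $\pi$-almost surely. So the left-hand side vanishes. Isolating the $S=\emptyset$ term (which equals $\mu_p(g)\mu_p(h)$), applying the triangle inequality, and collecting terms by level $d=|S|$ gives
\[
\mu_p(g)\mu_p(h) \;\le\; \sum_{d=1}^n \Bigl(\tfrac{p}{1-p}\Bigr)^d\Bigl|\sum_{|S|=d}\widehat g(S)\widehat h(S)\Bigr|.
\]
To finish, I would use orthogonality of the Efron–Stein levels (as recorded in Lemma~\ref{lem:laplacian_efron_stein}) to rewrite the inner sum as $\langle g^{=d},h^{=d}\rangle = \langle h^{=d},g\rangle$, yielding the stated bound. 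There is no real obstacle here; the only subtle points are verifying that the coupling $\pi$ is well-defined (which is where $p\leq 1/2$ enters) and recognizing that the $\rho=-p/(1-p)$ computation comes out cleanly for this precise choice of joint law.
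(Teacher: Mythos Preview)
Your proof is correct and takes essentially the same approach as the paper: both implement Friedgut's disjointness-operator strategy, arriving at the identity $\sum_d (-p/(1-p))^d \langle h^{=d}, g\rangle = 0$ and then isolating the $d=0$ term. The paper packages this via an explicit matrix operator $A$ (citing \cite{F08}) whose eigenvectors are the characters with eigenvalues $(-p/(1-p))^{|S|}$, while you phrase the same thing probabilistically via the disjoint coupling $\pi$; your $\mathbb{E}_\pi[g(x)h(y)]$ is exactly the paper's $\langle Ah, g\rangle_{\mu_p}$.
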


    \begin{proof}[Proof of Lemma~\ref{lem:cross_inner_prod}]
        Let $A^{(1)}=\begin{pmatrix}
            \frac{1-2p}{1-p} & \frac{p}{1-p} \\
            1 & 0
        \end{pmatrix}$, and let $A^{(n)}$ be the $n$-fold tensor product of $A^{(1)}$ with itself. Let $A \colon L_2(\bool^n, \mu_p) \to L_2(\bool^n,\mu_p)$ be the linear operator represented by the matrix $A^{(n)}$. In~\cite[Lemma~2.2]{F08}, it is shown that
        \begin{enumerate}
            \item[(1)] $A^{(n)}$ is a pseudo-disjointness matrix for $\{0,1\}^n$, meaning that if $S \cap T \neq \emptyset$ then $A^{(n)}_{S,T}=0$;

            \item[(2)] The eigenfunctions of $A$ are the characters $\{\chi_S\}_{S \subset [n]}$, and the eigenvalue that corresponds to $\chi_S$ is $\left( -\frac{p}{1-p} \right)^{|S|}$.
        \end{enumerate}
        By~(1), since $g,h$ are cross-intersecting we have
        \[
        \langle Ah, g \rangle_{\mu_p} = 0.
        \]
        By~(2), we have 
        \[
        Ah = A(\sum_{S \subset [n]} \hat h(S)\chi_S) = \sum_S \left( -\frac{p}{1-p} \right)^{|S|} \hat h(S)\chi_S = \sum_{d=0}^n \left( -\frac{p}{1-p} \right)^d h^{=d}.
        \]
        Hence, 
        \[
        0 = \langle Ah, g \rangle_{\mu_p} = \sum_{d=0}^n \left( -\frac{p}{1-p} \right)^d \langle h^{=d}, g \rangle \geq \mu_{p}(h) \mu_{p}(g) - \sum_{d=1}^n \left( \frac{p}{1-p} \right)^d | \langle h^{=d}, g \rangle | .
        \]
        Thus, $\mu_{p}(h) \mu_{p}(g) \leq \sum_{d=1}^n \left( \frac{p}{1-p} \right)^d | \langle h^{=d}, g \rangle |$, as asserted.  
    \end{proof}
    
Now we are ready to present the proof of Proposition~\ref{prop:mes_cross_inter}.
\begin{proof}
        Let $q = \max\{\log(1 / \mu_{p}(g)), 2\}$, let $q'$ be the H\"{o}lder conjugate of $q$, and let $A = \max \{ \log(1 / \mu_{p}(h)), q\}$. 
        By H\"{o}lder's inequality, we have
        \[
            | \langle h^{= d}, g \rangle | \leq 
            \lVert  h^{= d} \rVert_q \lVert  g \rVert_{q'} = 
             \lVert  h^{= d} \rVert_q \cdot \mu_{p}(g) / \mu_{p}(g)^{1/q}
        \]
        where the equality holds since $g$ is Boolean-valued. By the definition of $q$,
        \[ \mu_{p}(g)^{1/q} \geq \mu_{p}(g)^{1/\log(1 / \mu_{p}(g))} = e^{-1}.\]
        By Proposition~\ref{prop:glob_hyp} (since $q \geq 2$),
        \[ \lVert  h^{=d} \rVert_q \leq \mu_{p}(h) \Big( 400 r \sqrt{q} \sqrt{A} \Big)^d .\]
        Hence,
        \begin{equation}
        \label{eq:lvl_d_inner_prod}
            | \langle h^{= d}, g \rangle | \leq 
            e \mu_{p}(h) \mu_{p}(g) \left( 400 r \sqrt{q} \sqrt{A} \right)^{d} .
        \end{equation}
        Substituting~\eqref{eq:lvl_d_inner_prod} into Lemma~\ref{lem:cross_inner_prod}, we obtain
        \begin{equation}\label{eq:lvl_d_inner_prod2}
            \mu_{p}(h) \mu_{p}(g) \leq e \mu_{p}(h) \mu_{p}(g) \sum_{d=1}^n \left( \frac{p}{1-p} 400 r \sqrt{q \cdot A} \right)^d .
        \end{equation}
        If $\frac{p}{1-p} 400 r \sqrt{q \cdot A} \leq 1/4$ then~\eqref{eq:lvl_d_inner_prod2} yields
        \[ 1 \leq e \sum_{d=1}^n (1/4)^d < \frac{e}{3} < 1, \]
        a contradiction. Hence, we have $\frac{p}{1-p} 400 r \sqrt{q \cdot A} > 1/4,$ and consequently,
        \begin{equation}
    \label{eq:cross_main_ineq}
            A \geq \sqrt{q \cdot A} \geq \frac{1}{1600 r \frac{1-p}{p}} \geq \frac{1}{3200rp},
        \end{equation} 
        where the last inequality holds since $p \leq 1/2$. Denoting $c = \frac{1}{3200r}$, we obtain $A > c/p$. By the assumption of the proposition, $\log(1 / \mu_p(h)) < c/p$, and hence, $A = q$. Furthermore, we may assume $c/p > 2$, as otherwise, $8 e^{-c/p} > 1$ and the proposition holds trivially. Thus, $q = A > c/p > 2$, and consequently, $q = \log(1 / \mu_p(g))$. Therefore,~      \eqref{eq:cross_main_ineq} yields
        \[
        \log(1 / \mu_p(g)) = q > c/p,
        \]
        or equivalently, $\mu_p(g)< e^{-c/p}$, as asserted.                
    \end{proof}

\subsection{A smeared level-$1$ family admitting a measure-reducing restriction, is small}
\label{sec:smeared_families}


In this section we show that a smeared level-$1$ function $f \subset (\{0,1\}^n,\mu_p)$ that admits a measure-reducing restriction by fixing to zero a small number of coordinates, must be small. 
\begin{proposition}\label{prop:density-decrease}
Let $f \colon (\cube, \mu_p) \rightarrow \{0, 1\}$ be a Boolean-valued function with $m = m(f) > 1 / p^2$. If for some $S \subset [n]$ with $|S| \leq \frac{1}{4p}$, we have $\frac{\mu_p(f_{S \rightarrow 0})}{\mu_p(f)} < \frac{1}{4}$, then $\mu_p(f) < \exp(-0.001 \sqrt{m})$.
\end{proposition}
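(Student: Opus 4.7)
The plan is to argue by contrapositive. Assume $\alpha := \mu_p(f) \geq \exp(-0.001\sqrt{m})$ and, aiming at a contradiction, that $\mu_p(f_{S \to 0}) < \alpha/4$ for some $|S| \leq \frac{1}{4p}$. Fourier-expanding the restriction, and using $\chi_i(0) = -\tau$ with $\tau := \sqrt{p/(1-p)}$, conditioning on $x_S = 0$ annihilates every character $\chi_T$ with $T \not\subseteq S$ and gives
\[
\mu_p(f_{S \to 0}) = \alpha + \sum_{\emptyset \neq T \subseteq S}(-\tau)^{|T|}\hat{f}(T).
\]
The hypothesis then forces the magnitude of the Fourier sum on the right to exceed $3\alpha/4$, and the task becomes to bound this sum.

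Next, I extract the consequences of smearing. Since $f$ is Boolean, Parseval yields $\sum_i \hat{f}(\{i\})^2 \leq \|f\|_2^2 = \alpha$, and since $m$ coordinates each contribute at least $\delta^2/2$, we obtain $\delta := \max_i |\hat{f}(\{i\})| \leq \sqrt{2\alpha/m} \leq p\sqrt{2\alpha}$. Combined with $|S| \leq 1/(4p)$ and $\tau \leq \sqrt{2p}$, the level-$1$ contribution is bounded by $\tau \bigl|\sum_{i \in S}\hat{f}(\{i\})\bigr| \leq \tau|S|\delta = O(\sqrt{p\alpha})$, which is too small to account for the required $3\alpha/4$. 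Hence the bulk of the drop must come from levels $k \geq 2$.

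For the higher-level contributions, I split by level and use Cauchy--Schwarz: for each $k \geq 2$,
\[
\Bigl|\sum_{T \subseteq S,\, |T| = k}(-\tau)^{|T|}\hat{f}(T)\Bigr| \leq \tau^k \sqrt{\tbinom{|S|}{k}} \cdot \|f^{=k}\|_2.
\]
Since $|S|\tau^2 \leq 1/4$, the combinatorial factor $\tau^k \sqrt{\binom{|S|}{k}}$ decays at least like $(e/(4k))^{k/2}$. I then bound $\|f^{=k}\|_2$ (or more generally $\|f^{=k}\|_q$, paired with a Hölder-type estimate against the supported polynomial $\psi_S - 1$) by appealing to Proposition~\ref{prop:glob_hyp}, after first noting that the trivial Boolean bound $\mathbb{E}[f_{T \to x}] \leq 1 = (1/\alpha)^{1/|T|} \cdot \alpha$ together with the standing lower bound $\alpha \geq \exp(-0.001\sqrt{m})$ and $|T| \gtrsim 1/p$ makes $f$ effectively $(O(1),\alpha)$-global on sets of comparable size; restrictions on smaller sets are controlled by monotonicity of the contribution across levels.

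The main obstacle is the final quantitative balancing: one must tune the cutoff degree and the norm parameter $q$ in Proposition~\ref{prop:glob_hyp} so that the sum of the level-$k$ estimates is smaller than $3\alpha/4$ precisely in the regime $\alpha \geq \exp(-0.001\sqrt{m})$. The exponent $\sqrt{m}$ emerges because the smearing hypothesis enters through $\delta \leq p\sqrt{2\alpha}$, which serves as the "seed" of smallness at level $1$, and the hypercontractive machinery then propagates this to the higher levels; optimizing the resulting geometric-type sum yields the advertised exponential bound and the sought contradiction.
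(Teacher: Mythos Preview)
Your approach has two genuine gaps.

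First, the assertion that the level-$1$ contribution $\tau\sum_{i\in S}|\hat f(\{i\})| \leq \tau|S|\delta = O(\sqrt{p\alpha})$ is ``too small to account for $3\alpha/4$'' requires $\alpha > cp$ for some constant $c$, and this does not follow from the standing assumption $\alpha \geq \exp(-0.001\sqrt m)$. For instance, with $p=1/4$ (so $|S|\leq 1$) and $m$ large, $\alpha$ may be as small as $\exp(-0.001\sqrt m)$, exponentially smaller than $p$. So you cannot dismiss the possibility that the drop is driven by level~$1$.

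Second, the globalness needed to invoke Proposition~\ref{prop:glob_hyp} on $f$ is simply not available. The trivial bound $\mathbb E[f_{T\to x}]\leq 1$ gives $r=(1/\alpha)^{1/|T|}$, which depends on $|T|$; even at the favorable scale $|T|\sim 1/p$ this yields $r\approx\exp(0.001\,p\sqrt m)$, which is $O(1)$ only when $p$ is comparable to $1/\sqrt m$, whereas the hypothesis permits any $p>1/\sqrt m$. In fact $f$ carries no globalness assumption in this proposition --- that is precisely why the surrounding proof of Theorem~\ref{theo:upper_bound_smeared_and_intersect} first passes to a global \emph{restriction} before invoking the level-$d$ machinery. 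The hand-wave to ``monotonicity of the contribution across levels'' does not repair this.

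The paper's proof avoids levels $\geq 2$ entirely and runs in the opposite direction on level~$1$. From $\mu_p(f_{S\to 0})<\alpha/4$ one extracts, by a union bound over $i\in S$, some $i$ with $\mu_p(f_{i\to 1})\geq 3\alpha$, hence $\hat f(\{i\})\geq 2\sqrt{p/(1-p)}\,\alpha$ and so $\delta\geq 2\sqrt{p}\,\alpha$. Smearing then gives a \emph{lower} bound $\sigma^2\geq m\delta^2/2 \geq 2mp\,\alpha^2$. This is set against the \emph{upper} bound $\sigma^2\leq 750\,\alpha^2\log(1/\alpha)$ from Proposition~\ref{lem:spr_sy_lvl_1} (the level-$1$ inequality for smeared functions --- the key tool you never invoke), yielding $\log(1/\alpha)\geq c\,mp > c\sqrt m$, the desired contradiction.
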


In the proof of the proposition, we use a `level-1 inequality for smeared functions', which may be useful on its own. 
\begin{proposition}[Level-$1$ inequality for smeared level-$1$ functions]
    \label{lem:spr_sy_lvl_1}
        Let $f \colon (\cube, \mu_p) \rightarrow \{0, 1\}$ be a Boolean-valued function, and assume that $m(f) > 1 / p^2$. If $\alpha = \mathbb{E}[f] > e^{-\sqrt{m(f)}}$, then
        \[ \sum_{i=1}^n \hat f(\{i\})^2 \leq 750 \, \alpha^2 \log(1 / \alpha).\]
\end{proposition}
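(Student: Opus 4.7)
The plan is to combine H\"older's inequality with a classical Bernstein--Rosenthal moment bound for the linear function $f^{=1} = \sum_i \hat f(\{i\})\chi_i$, using the smearedness hypothesis to control the non-Gaussian error term.

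First I would exploit the orthogonality of the Efron--Stein decomposition to write $\sigma^2 = \|f^{=1}\|_2^2 = \langle f, f^{=1}\rangle$ and apply H\"older's inequality with conjugate exponents $q$ and $q' = q/(q-1)$ (to be chosen):
$$\sigma^2 \leq \|f\|_{q'}\,\|f^{=1}\|_q = \alpha^{1-1/q}\|f^{=1}\|_q,$$
where the equality $\|f\|_{q'}=\alpha^{1-1/q}$ uses that $f$ is $\{0,1\}$-valued. Choosing $q = \log(1/\alpha)$ (the regime $\alpha > e^{-2}$, in which $q<2$, is handled separately by the trivial bound $\sigma^2\leq\alpha(1-\alpha)$, which is easily absorbed by $750\alpha^2\log(1/\alpha)$) yields $\alpha^{1-1/q}=e\alpha$, so it suffices to prove $\|f^{=1}\|_q \leq C\sigma\sqrt{q}$ for an absolute constant $C$.

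Next I would bound $\|f^{=1}\|_q$ via a Bernstein-type moment inequality. The summands $\hat f(\{i\})\chi_i(x)$ are independent in $x$, mean-zero, have variance $\hat f(\{i\})^2$, and are bounded in absolute value by $\delta\sqrt{(1-p)/p}\leq \delta/\sqrt p$ (using $p\leq 1/2$). Integrating the Bernstein tail bound then yields
$$\|f^{=1}\|_q \leq K\bigl(\sigma\sqrt q + q\,\delta/\sqrt p\bigr)$$
for a universal $K$. Equivalently, the same envelope can be obtained from Proposition~\ref{lem:tenzorization} applied to the degree-one function $f^{=1}$, since then only the $|S|\leq 1$ terms survive, producing a Gaussian-norm part of size $\Theta(\sigma\sqrt q)$ and a single-coordinate $L^q$ part of size $\Theta(q\,\delta/\sqrt p)$.

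The final step is to show that the non-Gaussian term $q\delta/\sqrt p$ is absorbed by $\sigma\sqrt q$, which is exactly where smearedness enters. Since at least $m$ coordinates satisfy $\hat f(\{i\})^2\geq\delta^2/2$, we have $\sigma^2\geq m\delta^2/2$, hence $\delta\leq \sigma\sqrt{2/m}$. The hypothesis $m>1/p^2$ gives $p\sqrt m>1$, equivalently $mp\geq\sqrt m$, while $\alpha>e^{-\sqrt m}$ yields $q=\log(1/\alpha)<\sqrt m\leq mp$. Consequently,
$$q\delta/\sqrt p \;\leq\; q\sigma\sqrt{2/(mp)} \;=\; \sigma\sqrt{2q^2/(mp)} \;\leq\; \sigma\sqrt{2q},$$
so $\|f^{=1}\|_q \leq K(1+\sqrt 2)\sigma\sqrt q$. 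Plugging back into H\"older, $\sigma^2 \leq eK(1+\sqrt 2)\alpha\,\sigma\sqrt{\log(1/\alpha)}$, which rearranges to the claimed $\sigma^2 \leq 750\,\alpha^2\log(1/\alpha)$ once the universal constants are tracked. I expect the main obstacle to be bookkeeping the constant to land at exactly $750$: this is routine but tedious, and may require invoking a sharp form of the Bernstein moment inequality or carrying out the Gaussian-encoding calculus of Section~\ref{sec:Gaussian} directly on $f^{=1}$ with all constants kept explicit.
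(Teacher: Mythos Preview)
Your proof is correct and follows essentially the same skeleton as the paper's: write $\sigma^2=\langle f,f^{=1}\rangle$, apply H\"older with $q=\log(1/\alpha)$, bound $\|f^{=1}\|_q$ by a Gaussian term $\Theta(\sigma\sqrt q)$ plus a single-coordinate error term, and use the smearedness hypothesis together with $q<\sqrt m\le mp$ to absorb that error term into the Gaussian part.

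The one genuine difference is the tool you use for the moment bound on $\|f^{=1}\|_q$. The paper observes that $f^{=1}$ is $(r,\sigma)$-$L_2$-global with $r=\sqrt{2/(mp)}$, checks $r\le\sqrt{2/\log(1/\alpha)}$, and then feeds this into Lemma~\ref{lem:level d given globalness} (the paper's general ``level-$d$ when $f^{=d}$ is global'' lemma), which in turn rests on Corollary~\ref{cor:hypercontractivity for global functions - alternative}. You instead invoke a classical Bernstein/Rosenthal moment inequality for sums of bounded independent random variables. For degree-one functions these two routes are essentially the same computation: the paper's machinery, specialized to $d=1$, produces exactly a Gaussian-plus-$L^\infty$ envelope of the Bernstein type. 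Your approach is more self-contained and avoids the paper's hypercontractivity framework for this particular lemma; the paper's approach has the advantage of exhibiting Proposition~\ref{lem:spr_sy_lvl_1} as the $d=1$ case of the general level-$d$ apparatus, so no separate argument is needed. Regarding the constant $750$: the paper gets $\sigma\le\frac{33}{\sqrt2}\alpha\sqrt{\log(1/\alpha)}$ and squares to $(33/\sqrt2)^2\approx545<750$, so there is slack; your Bernstein constant will land in the same ballpark.
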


\begin{proof}
    Consider the function $f^{=1}=\sum_{i=1}^n \hat f(\{i\})\chi_{\{i\}}$. We have $\lVert  f^{=1} \rVert_2 = \sigma$, $D_{S, x} [f^{=1}] = 0$ for any $S \subset [n]$ with $|S| > 1$, and for each $1 \leq i \leq n$,
        \[
        \lVert  D_{i,x} [f^{=1}] \rVert_2 = | \mathbb{E}[D_{i,x}f] | = |\hat{f}(\{i\})| \, |\chi_i(x_i)| \leq \delta \sqrt{\frac{1-p}{p}} \leq \frac{\sqrt{2} \sigma}{\sqrt{m p}},
        \]
    where the last inequality holds since $m\frac{\delta^2}{2}\leq \sigma^2$ by the definition of $m(f)$. Hence, $f^{=1}$ is $\left( \sqrt{\frac{2}{mp}}, \sigma \right)$-$L_2$-global.

    Note that we may assume $\sqrt{\frac{2}{mp}} \leq \sqrt{\frac{2}{\log(1 / \alpha)}}$, as otherwise, $\log(1 / \alpha) > m p \geq \sqrt{m}$, contradicting the assumption $\alpha>e^{-\sqrt{m}}$. Thus, $f^{=1}$ is $\left( \sqrt{\frac{2}{\log(1/\alpha)}}, \sigma \right)$-$L_2$-global.
    Furthermore, we may assume $1 < \frac{1}{4} \log(1 / \alpha)$, as otherwise $\alpha > e^{-4}$, and hence, $750 \, \alpha^2 \log(1 / \alpha) > 1 \ge \sigma^2$, proving the assertion.
    
    Therefore, we may apply Lemma~\ref{lem:level d given globalness} to the function $f$, with $d=1$, $\gamma_1 = \lVert  f \rVert_1 = \alpha$, $\gamma_2 = \lVert f\rVert_2 = \sqrt{\alpha}$, and $r = \sqrt{\frac{2}{\log(1 / \alpha)}}$, to obtain
    \[
    \sigma^2 \leq \frac{33}{\sqrt{2}} \alpha \sigma \sqrt{\log(1 / \alpha)}, \]
    and consequently, $\sigma^2 \leq 750 \alpha^2 \log(1/\alpha)$, as asserted.
    \end{proof}

Now we are ready to present the proof of Proposition~\ref{prop:density-decrease}.

\begin{proof}
The proof of the proposition consists of three steps. We consider a function $f$ which satisfies the hypothesis and assume on the contrary that $f$ is `large' (formally, that $\mu_p(f) \geq \exp(-0.001 \sqrt{m})$). First, we show that $\mathbb{E}[f(x)|x_S \neq 0]$ is large. Then, we show that there exists $i \in S$ such that $f_{i \to 1}$ is large, and deduce that $|\hat f(\{i\})|$ is large. Finally, we use the smearness of $f$ to infer that $\sum_i \hat f(\{i\})^2$ is large, which yields a contradiction by Proposition~\ref{lem:spr_sy_lvl_1}. 

\medskip \noindent \emph{Step~1: $\mathbb{E}[f(x)|x_S \neq 0]$ is large.}
        Denote $|S|=s$ and $a = (1-p)^s$. We claim that
        \begin{equation}\label{Eq:Measure-decrease1}
        \mathbb{E}[f(x)|x_S \neq 0] \geq \frac{1 - a/4}{1 - a} \cdot \mu_p(f).
        \end{equation}
        Indeed, by the assumption on $\mu_p(f_{S \to 0})$, we have 
        \[ \mu_p(f) = \E[f(x) \mid x_S = 0] \cdot \Pr(x_S = 0) + \E[f(x) \mid x_S \neq 0] \cdot \Pr(x_S \neq 0) \leq a \cdot \frac{\mu_p(f)}{4} + (1-a) \E[f(x) \mid x_S \neq 0].\]
        Equation~\eqref{Eq:Measure-decrease1} follows by rearranging.

\medskip \noindent \emph{Step~2: There exists $i \in S$ such that $\hat f(\{i\})$ is large.} First, we claim that there exists $i \in S$ such that
        \begin{equation}\label{Eq:Measure-decrease2}
            \mu_p(f_{i \to 1}) \geq 3\mu_p(f).
        \end{equation}
        Indeed, by a union bound, we have 
        \begin{align*}
            (1-a) \E[f(x) \mid x_S \neq 0] = \E[f(x) \mid x_S \neq 0] \cdot \Pr(x_S \neq 0) \leq \sum_{i \in S} \mu_p(f_{i \to 1}) \cdot \Pr(x_i = 1) \leq s p \cdot \max_{i \in S} \mu_p(f_{i \to 1}).
        \end{align*}
       By rearranging and substituting into~\eqref{Eq:Measure-decrease1}, we obtain $\mu_p(f_{i \to 1}) \geq \frac{1-a/4}{sp}\mu_p(f)$ for some $i$. Equation~\eqref{Eq:Measure-decrease2} follows, since by assumption, $sp \leq 1/4$ and $a \leq 1$. Now, for $i$ that satisfies~\eqref{Eq:Measure-decrease2}, we have 
        \begin{equation}\label{Eq:Measure-decrease3}
            \sqrt{\frac{1-p}{p}} \hat f(\{i\}) = \mu_p(f_{i \to 1}) - \mu_p(f) \geq 2\mu_p(f).
        \end{equation}

\medskip \noindent \emph{Step~3: $\sum_i \hat f(\{i\})^2$ is large, contradicting Proposition~\ref{lem:spr_sy_lvl_1}.} By~\eqref{Eq:Measure-decrease3}, we have $\delta = \max_{i} |\hat f(\{i\})| \geq \sqrt{\frac{p}{1-p}} \cdot 2\mu_p(f)$. By the definition of $m(f)$, this implies 
        \[
        \sum_{i=1}^n \hat f(\{i\})^2 \geq m \frac{p}{1-p} \alpha^2 .
        \]
        On the other hand, by Proposition~\ref{lem:spr_sy_lvl_1}, we have
        \[
        \sum_{i=1}^n \hat f(\{i\})^2 \leq 750 \, \alpha^2 \log(1 / \alpha).
        \]
        Combining these two inequalities, we obtain $m \frac{p}{1-p} \leq 750\log(1 / \alpha)$, which implies 
        \[
        \log(1/\alpha) \geq \frac{1}{750}mp > 0.001 \sqrt{m},
        \]
        contradicting the assumption $\mu_p(f) \geq \exp(-0.001 \sqrt{m})$. This completes the proof.
\end{proof}

\subsection{Proof of Theorem~\ref{theo:upper_bound_smeared_and_intersect}}
\label{sec:intersecting-proof}

Let us recall the statement of the theorem.

\medskip \noindent \textbf{Theorem~\ref{theo:upper_bound_smeared_and_intersect}.} Let $f \colon (\{0,1\}^n,\mu_p) \to \{0,1\}$ be intersecting, and assume that $1/\sqrt{m(f)} \leq p \leq 1/2$. Then $\mu_p(f) \leq 32 \exp \left( -0.0001/p\right)$.

\begin{proof}
    Let $f$ be a function that satisfies the hypothesis, and assume on the contrary $\mu_p(f) > 32 \exp \left( -0.0001/p \right)$. Since $1 / \sqrt{m} \leq p$, this implies   
    \begin{equation}\label{Eq:Proof-intersecting1}
        \mu_p(f) > \exp(-0.001 \sqrt{m}) .        
    \end{equation}
    \emph{Step~1: Constructing a global restriction of $f$.} Fix some `globality parameter' $r > 1$, say $r = e$. We want to find a subset $S \subseteq [n]$ and $x \in \bool^S$ such that $h=f_{S\to x}$ is `global' -- namely, such that for any $T \subseteq [n] \setminus S$ and for any $y \in \bool^T$, we have $\mu_p(h_{T \to y}) \leq \mu_p(h) r^{|T|}$. 
    For this sake, we choose $S, x$ such that $\frac{\mu_p(f_{S \rightarrow x})}{r^{|S|}}$ is maximal (over all choices of $S,x$), and set $h=f_{S \to x}$. By the choice of $S,x$, for any $T \subseteq [n] \setminus S$ and for any $y \in \bool^T$, we have
        \[ \frac{\mu_p(h_{T \rightarrow y})}{r^{|S| + |T|}} = \frac{\mu_p(f_{S \cup T \, \rightarrow (x, y)})}{r^{|S| + |T|}} \leq \frac{\mu_p(f_{S \rightarrow x})}{r^{|S|}} = \frac{\mu_p(h)}{r^{|S|}},\]
    and thus, $\mu_p(h_{T \rightarrow y}) \leq \mu_p(h) \cdot r^{|T|},$ as desired. In addition, we have 
        \[
            \exp \left(-\frac{0.0001}{p} \right) \leq \mu_p(f) = \frac{\mu_p(f)}{r^{0}} \leq \frac{\mu_p(f_{S \rightarrow x})}{r^{|S|}} \leq e^{-|S|},
        \]
        and consequently,
        \begin{equation}\label{Eq:Proof-intersecting2}
            |S| \leq \frac{0.0001}{p}.
        \end{equation}
     \emph{Step~2: Finding a function which cross-intersects the global function $h$ and applying to it Proposition~\ref{prop:mes_cross_inter}.}
        Define $g = f_{S \rightarrow 0}$. The function $g$ cross-intersects $h$, and we have $\mu_p(h_{T \rightarrow y}) \leq \mu_p(h) \cdot r^{|T|}$ for any $T, y$ and $\mu_p(h) \geq \mu_p(f)>\exp \left( -0.0001/p \right)$. Hence, we may apply Proposition~\ref{prop:mes_cross_inter} to deduce that 
        \begin{equation}\label{Eq:Proof-intersecting3}
        \mu_p(g) = \mu_p(f_{S \to 0}) \leq 8 \exp \left( -\frac{0.0001}{p} \right) \leq \frac{1}{4}\mu_p(f).
        \end{equation}
    \emph{Step~3: Obtaining a contradiction using Proposition~\ref{prop:density-decrease}}. By Equations~\eqref{Eq:Proof-intersecting2} and~\eqref{Eq:Proof-intersecting3}, the smeared level-$1$ function $f$ admits a restriction $g=f_{S \to 0}$ such that $\frac{\mu_p(g)}{\mu_p(f)}\leq \frac{1}{4}$, for $|S|<\frac{1}{4p}$. By Proposition~\ref{prop:density-decrease}, this implies that $\mu_p(f)<\exp(-0.001 \sqrt{m})$, which contradicts~\eqref{Eq:Proof-intersecting1}. This completes the proof.
\end{proof}

\subsection{Intersecting transitive-symmetric families of vectors}
\label{sec:vector_intersecting_families}

In this section we prove Theorem~\ref{theo:vector_intersecting_upper_bound}. Let us recall the formulation of the theorem. 

\medskip \noindent \textbf{Theorem~\ref{theo:vector_intersecting_upper_bound}.}
    Let $n, k \in \mathbb{N}$ such that $2 \log n \leq k \leq \sqrt{n} \log n$. Let $\mathcal{A} \subseteq [k]^n$ be a transitive-symmetric vector-intersecting family. Then
    \[
        \frac{|\mathcal{A}|}{k^n} \leq 128 \exp \left( -\frac{0.0001k}{\log n} \right) .
    \]
    For $k > \sqrt{n} \log n$, we have $\frac{|\mathcal{A}|}{k^n} \leq 128 \exp \left( -0.0001\sqrt{n} \right)$.

\medskip \noindent To prove the theorem, we show that $\mathcal{A}$ can be embedded into $((\{0,1\}^k)^n,\mu_p)$, for $p=\frac{\log n}{k}$, in such a way that the resulting family $\mathcal{B}$ is intersecting, has smeared level-$1$ coefficients, and satisfies $\mu_p(\mathcal{B}) \geq \frac{1}{4}\frac{|\mathcal{A}|}{k^n}$. The assertion will then follow from  application of Theorem~\ref{theo:upper_bound_smeared_and_intersect} to $\mathcal{B}$. 

Let us present the embedding. Denote
\[
\slice \defeq \{S \in (\bool^k)^n \mid \forall i \colon |S_i| = 1\}, \qquad \mbox{and} \qquad \geslice \defeq \{S \in (\bool^k)^n \mid \forall i \colon |S_i| \geq 1\}. 
\]
First, we embed $\mathcal{A}$ into $\slice$, to obtain
\[ 
    \tilde{\mathcal{A}} \defeq \{S=(S_1,S_2,\ldots,S_n) \in (\bool^k)^n \mid \exists z \in \mathcal{A} \ s.t. \ \forall i\colon S_i = \{ z_i \} \} \subseteq \slice .
\]
Then we take $\mathcal{B} = \tilde{\mathcal{A}}^\uparrow$ to be the up-closure of $\tilde{\mathcal{A}}$ in $(\{0,1\}^k)^n$ (i.e., we have $x \in \mathcal{B}$ if and only if $\exists y \in \tilde{\mathcal{A}} \colon \forall i, y_i \leq x_i$). The following lemma shows that for an appropriate choice of $p$, $\mu_p(\mathcal{B})$ is not much smaller than $\frac{|\mathcal{A}|}{k^n}$. 
\begin{lem}
\label{lem:compare_mu_p_to_slice}
For any $k, n \in \mathbb{N}, p \in (0, 1]$, and any $\mathcal{A} \subseteq [k]^n$, we have
\[ \frac{|\mathcal{A}|}{k^n} \leq \frac {\mu_{p}(\mathcal{B})} {\left( 1-\left( 1-p \right)^k \right)^n} .\]
\end{lem}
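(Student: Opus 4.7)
The plan is to exhibit a coupling between a $\mu_p$-sample in $(\{0,1\}^k)^n$ and a uniform sample in $[k]^n$ that makes the desired inequality transparent. Let $x=(x_1,\ldots,x_n)\sim \mu_p$, where each $x_i\in\{0,1\}^k$ has independent $\mu_p$-coordinates, and let $E$ denote the event that $|x_i|\ge 1$ for every $i\in[n]$. By independence, $\Pr[E]=(1-(1-p)^k)^n$.

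Conditional on $E$, I will construct $y=(y_1,\ldots,y_n)\in [k]^n$ by choosing, independently for each $i$, a coordinate $y_i$ uniformly at random from the (nonempty) support of $x_i$. The main preparatory step is to verify that $y$, so constructed, is distributed uniformly on $[k]^n$. For a single coordinate this amounts to checking that for every $j\in[k]$,
\[
\Pr[y_i=j\mid E]=\sum_{S\ni j}\frac{1}{|S|}\cdot\frac{p^{|S|}(1-p)^{k-|S|}}{1-(1-p)^k},
\]
which is independent of $j$ by the symmetry of the sum in $j$; together with the independence across the coordinates $i$ this gives uniformity of $y$ on $[k]^n$.

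The second, and easier, step is to observe that whenever $y\in\mathcal{A}$ we automatically have $x\in\mathcal{B}$. Indeed, $(\{y_1\},\ldots,\{y_n\})\in\tilde{\mathcal{A}}$ by definition of $\tilde{\mathcal{A}}$, and $\{y_i\}\subseteq x_i$ for all $i$ by the construction of $y$, so $x$ lies in the up-closure $\mathcal{B}=\tilde{\mathcal{A}}^\uparrow$.

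Combining the two steps,
\[
\mu_p(\mathcal{B})=\Pr[x\in\mathcal{B}]\ge \Pr[\{x\in\mathcal{B}\}\cap E]\ge \Pr[\{y\in\mathcal{A}\}\cap E]=\Pr[y\in\mathcal{A}\mid E]\,\Pr[E]=\frac{|\mathcal{A}|}{k^n}\bigl(1-(1-p)^k\bigr)^n,
\]
and rearranging yields the claimed bound. The only non-routine point is the uniformity computation in the first step; everything else is bookkeeping about up-closures and independence.
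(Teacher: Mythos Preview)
Your proof is correct and follows essentially the same coupling argument as the paper: both condition $\mu_p$ on the event that every block is nonempty, then sample a uniform singleton below each block to obtain a uniform point of $[k]^n$, and use that $y\in\mathcal{A}$ forces $x\in\mathcal{B}=\tilde{\mathcal{A}}^\uparrow$. The paper simply asserts the uniformity of the induced distribution on $\slice$, whereas you spell out the symmetry-in-$j$ argument for it; otherwise the two proofs coincide.
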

By fixing $p = \frac{\log n}{k}$ and using the monotonicity of the function $x \mapsto (1 - 1/x)^x$ for $x \geq 1$, Lemma~\ref{lem:compare_mu_p_to_slice} implies:
\begin{cor}
\label{cor: compare mu_p to slice}
Let $2 \leq k,n\in \mathbb{N}$. Set $p=\frac{\log n}{k}$. Then for any $\mathcal{A} \subseteq [k]^n$, we have 
\[ \frac{|\mathcal{A}|}{k^n} \leq 4 \mu_{p}(\mathcal{B}) .\]
\end{cor}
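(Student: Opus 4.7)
The plan is to prove the bound via a two-stage sampling argument that couples the uniform measure on $[k]^n$ to the measure $\mu_p$ on $(\{0,1\}^k)^n$. First, I would sample $x\sim \mu_p$ on $(\{0,1\}^k)^n$. Note that the event $x\in \geslice$ (that is, every $x_i$ contains at least one $1$) has probability exactly $(1-(1-p)^k)^n$, since the coordinates are independent and for each $i$ the event $|x_i|\geq 1$ has probability $1-(1-p)^k$. Conditional on $x\in \geslice$, I would then sample $z\in [k]^n$ by choosing each $z_i$ uniformly at random from the (nonempty) support $\{j\in[k]:(x_i)_j=1\}$, independently across $i$.

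The key observation is that the resulting $z$ is uniformly distributed on $[k]^n$. This follows because the measure $\mu_p$ on $\{0,1\}^k$ is invariant under permutations of $[k]$, and conditioning on $|x_i|\geq 1$ preserves this invariance; therefore the distribution of $z_i$ obtained by picking a uniformly random element of the support of $x_i$ is invariant under permutations of $[k]$ and so must be the uniform distribution on $[k]$. Consequently $\Pr[z\in\mathcal{A}\mid x\in\geslice]=|\mathcal{A}|/k^n$.

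Finally, I would observe that whenever $z\in \mathcal{A}$, the element $\tilde z=(\{z_1\},\ldots,\{z_n\})\in\tilde{\mathcal{A}}$ lies coordinate-wise below $x$ (since $(x_i)_{z_i}=1$ by the construction of $z$), so $x$ lies in the up-closure $\mathcal{B}=\tilde{\mathcal{A}}^{\uparrow}$. Hence
\[
\mu_p(\mathcal{B})\geq \Pr[x\in\mathcal{B},\,x\in\geslice]\geq \Pr[z\in\mathcal{A}\mid x\in\geslice]\cdot \Pr[x\in\geslice]=\frac{|\mathcal{A}|}{k^n}\bigl(1-(1-p)^k\bigr)^n,
\]
and rearranging yields the claimed inequality. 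There is no real obstacle here: the only subtle point is verifying the symmetry claim that $z$ is uniform on $[k]^n$, which is immediate from the $S_k$-invariance of $\mu_p$ on $\{0,1\}^k$ and of the conditioning event.
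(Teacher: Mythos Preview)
Your coupling argument is correct and is essentially identical to the paper's proof of Lemma~\ref{lem:compare_mu_p_to_slice}: sample $x\sim\mu_p$, condition on $x\in\geslice$, then pick a uniform element below $x$ in $\slice$; the resulting point is uniform on $[k]^n$ and whenever it lands in $\mathcal{A}$ the original $x$ lies in $\mathcal{B}$. So the inequality you arrive at,
\[
\frac{|\mathcal{A}|}{k^n}\leq \bigl(1-(1-p)^k\bigr)^{-n}\,\mu_p(\mathcal{B}),
\]
is exactly Lemma~\ref{lem:compare_mu_p_to_slice}.

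The gap is that this is not yet the corollary: you never use the specific choice $p=\frac{\log n}{k}$, and ``rearranging'' does not produce the constant $4$. What remains is the short numerical step the paper carries out: with $p=\frac{\log n}{k}$ one has $(1-p)^k\leq e^{-pk}=1/n$, hence
\[
\bigl(1-(1-p)^k\bigr)^n\geq\Bigl(1-\tfrac{1}{n}\Bigr)^n\geq\tfrac{1}{4}\quad\text{for }n\geq 2,
\]
the last inequality being the monotonicity of $x\mapsto(1-1/x)^x$ on $x\geq 1$ (evaluated at $x=2$). Plugging this in gives the factor $4$. Your write-up should include this step explicitly; as written, the proof stops at the lemma rather than the corollary.
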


\begin{proof}[Proof of Lemma~\ref{lem:compare_mu_p_to_slice}]
The proof uses a coupling argument. Define a distribution $\mu$ on $\geslice$ by conditioning $\mu_p$: For all $x \in \geslice$,
\[ \mu(x) = \frac {\mu_{p}(x)} {\mu_{p}(\geslice)} = \frac {\mu_{p}(x)} {\left( 1-\left( 1-p \right)^k \right)^n}.\]
Define a distribution $\nu$ on $\slice$ by sampling an element $x$ from the distribution $\mu$ and sampling uniformly at random an element of $\slice$ among the elements beneath it: $\{ y \in \slice \mid y \leq x \}$. It is clear that $\nu$ is the uniform distribution on $\slice$. Hence, the marginals of the distribution $(\mu, \nu)$ on $\geslice \times \slice$ satisfy
\[ 
\mu(\mathcal{B}) = \frac {\mu_{p}(\mathcal{B})} {\left( 1-\left( 1-p \right)^k \right)^n}, \qquad \mbox{and} \qquad \nu(\tilde{\mathcal{A}}) = \frac {|\tilde{\mathcal{A}}|} {|\slice|} = \frac{|\mathcal{A}|}{k^n}.
\]
Consider the indicator random variables
\[ 
    X(x, y) = 
    \begin{cases}
        1, & x \in \mathcal{B} \\
        0, & otherwise
    \end{cases}
    \qquad \mbox{and} \qquad Y(x, y) = 
    \begin{cases}
        1, & y \in \tilde{\mathcal{A}} \\
        0, & otherwise
    \end{cases}.
\]
Note that $X \geq Y$. Indeed, by the definition of $\nu$, for any $(x,y)$ drawn from the distribution $(\mu,\nu)$ we have $x \geq y$. Thus,
\[
Y(x,y)=1 \quad \Rightarrow \quad y \in \tilde{\mathcal{A}} \quad \Rightarrow \quad x \in \tilde{\mathcal{A}}^\uparrow = \mathcal{B} \quad \Rightarrow \quad X(x,y)=1. 
\]
Therefore, we have
\[
    \frac {\mu_{p}(\mathcal{B})} {\mu_{p}(\geslice)} = \mu(\mathcal{B}) = \E[X] \geq \E[Y] = \nu(\tilde{\mathcal{A}}) = \frac{|\mathcal{A}|}{k^n}, 
\]
as asserted.
\end{proof}

Now we are ready to present the proof of Theorem~\ref{theo:vector_intersecting_upper_bound}.

\begin{proof}
    Let $2 \log n \leq k \leq \sqrt{n} \log n$, and let $\mathcal{A} \subseteq [k]^n$ be a transitive-symmetric vector-intersecting family. Construct $\mathcal{B} \subset ((\{0,1\}^k)^n,\mu_{\log n/k})$ in the way described above. Clearly, $\mathcal{B}$ is intersecting. 
    
    We claim that $\mathcal{B}$ has $n$-smeared level-$1$ coefficients (meaning $m(\mathcal{B}) \geq n$). Indeed, if $\sigma \in S_n$ is a permutation that preserves $\mathcal{A}$, then $\sigma' \in S_{nk}$ defined by $ \sigma'(i + j \cdot k) = i + \sigma(j) \cdot k $ preserves $\mathcal{B}$. Since $\mathcal{A}$ is transitive-symmetric, there exists a group of permutations $G \leq S_{nk}$ preserving $\mathcal{B}$, which is transitive on each of the following sets of coordinates:
    \[
    \{1, 1+k, \ldots, 1+(n-1)k\}, \{2, 2+k, \ldots, 2+(n-1)k\}, \ldots, \{k, 2k, \ldots, nk\}.
    \]
    Consequently, we have $\hat 1_{\mathcal{B}}(\{i\}) = \hat 1_{\mathcal{B}}(\{i+jk\})$ for all $1 \leq i \leq k$ and all $1 \leq j \leq n$, and thus, $1_{\mathcal{B}}$ has $n$-smeared level-$1$ coefficients.

    Hence, the function $1_{\mathcal{B}}$ satisfies the hypothesis of Theorem~\ref{theo:upper_bound_smeared_and_intersect}. Applying the theorem to it, we obtain $\mu_{\log n/k}(\mathcal{B}) \leq 32 \exp(-0.0001k/\log n)$. As $\frac{|\mathcal{A}|}{k^n} \leq 4 \mu_{\log n/k}(\mathcal{B})$ by Corollary~\ref{cor: compare mu_p to slice}, the assertion follows.

    \medskip To prove the assertion for $k > \sqrt{n} \log n$ and any  transitive-symmetric vector-intersecting family $\mathcal{A} \subseteq [k]^n$, construct $\mathcal{B}$ as above and note that as $\mathcal{B}$ is monotone, combination of Corollary~\ref{cor: compare mu_p to slice} with an application of Theorem~\ref{theo:upper_bound_smeared_and_intersect} for $p = \frac{1}{\sqrt{n}}$ yields
    \[
        \frac{|\mathcal{A}|}{k^n} \leq 4 \mu_{\log n / k}(\mathcal{B}) \leq 4 \mu_{1 / \sqrt{n}}(\mathcal{B}) \leq 128\exp(-0.0001/\sqrt{n}),
    \]
    proving the assertion.
\end{proof}

\bibliographystyle{plain}
\bibliography{refs}

\end{document}